\theoremstyle{plane} 
\newtheorem{theorem}{\indent\sc Theorem}[section] 
\newtheorem{lemma}[theorem]{\indent\sc Lemma}
\newtheorem{proposition}[theorem]{\indent\sc Proposition}
\newtheorem{problem}[theorem]{\indent\sc Problem}
\newtheorem{main}[theorem]{\indent\sc Main Theorem}
\theoremstyle{definition}
\begin{document}

\title[second exterior power of tangent bundles of Fano fourfolds]{On the second exterior power of tangent bundles of Fano fourfolds with Picard number $\rho(X)\geqslant2$} 

\author[K. Yasutake]{Kazunori Yasutake} 

\subjclass[2010]{ 
Primary 14J40; Secondary 14J10, 14J45, 14J60.
}

\keywords{ 
Projective manifold, second exterior power of tangent bundle, extremal contraction, Fano bundle.
}
\address{
Organization for the Strategic Coordination of Research and 
Intellectual Properties \endgraf
Meiji University \endgraf
Kanagawa 214-8571  \endgraf
Japan
}
\email{tz13008@meiji.ac.jp}

\maketitle

\begin{abstract}
In this paper, we classify Fano fourfolds  with Picard number greater than one such that the second exterior power of tangent bundles are numerically effective.
\end{abstract}

\section*{Introduction}
In the paper \cite{peternell2}, F. Campana and T. Peternell classify smooth projective threefolds $X$ such that the second exterior power of tangent bundle $\Lambda^2\mathcal{T}_X$ are numerically effective $($nef , for short$)$. 
Manifolds satisfying this condition are rare in manifolds whose the anti-canonical class $-K_X$ is nef.
Such threefolds are as follows.

\begin{theorem}[\cite{peternell2}, Theorem]\label{3-dim}
Let $X$ be a projective threefold with $\Lambda^2\mathcal{T}_X$ nef.
Then either $\mathcal{T}_X$ is nef or $X$ is one of the following.
\begin{enumerate}
\item $X$ is a blowing-up of $\mathbb{P}^3$ at a point.
\item $X$ is a del Pezzo threefold $($ that is, Fano threefold with index two$)$ of $\rho(X)=1$ except for those of degree one.
\end{enumerate}
\end{theorem}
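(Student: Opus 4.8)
\emph{Strategy.} The plan is to pass from the nef bundle $\Lambda^2\mathcal{T}_X$ to positivity of $-K_X$, extract from restrictions to rational curves a numerical inequality that forces the Mori structure of $X$ to be very special, and then dispose of the few surviving cases by direct inspection, quoting the Campana--Peternell classification of threefolds with nef tangent bundle for the ``generic'' branch. First I would note that a quotient of a nef bundle is nef and its determinant is nef, so from $\det\Lambda^2\mathcal{T}_X\cong\mathcal{O}_X(-2K_X)$ one gets that $-K_X$ is nef; in particular $X$ is not of general type, and $K_X$ fails to be nef unless $-K_X\equiv 0$. Second, and crucially, using the isomorphism $\Lambda^2\mathcal{T}_X\cong\Omega^1_X\otimes\mathcal{O}_X(-K_X)$ I would restrict to an arbitrary rational curve $C\subset X$: writing $\mathcal{T}_X|_C=\mathcal{O}(2)\oplus\mathcal{O}(a)\oplus\mathcal{O}(b)$ with $a\geq b$ and $a+b+2=-K_X\cdot C$, one gets $\Lambda^2\mathcal{T}_X|_C=\mathcal{O}(a+2)\oplus\mathcal{O}(b+2)\oplus\mathcal{O}(a+b)$, so nefness forces $b\geq-2$ and $a+b\geq0$; in particular $-K_X\cdot C\geq2$ for \emph{every} rational curve $C$ on $X$. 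This inequality is what makes the classification finite.

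\emph{Dichotomy.} If $-K_X\equiv0$, then $\Lambda^2\mathcal{T}_X$ is numerically flat, and the Beauville--Bogomolov decomposition of threefolds with torsion canonical class applies: a Calabi--Yau or K3 factor has $c_2\neq0$ and so cannot carry a nef cotangent bundle (resp. a nef $\Lambda^2\mathcal{T}$), hence up to finite étale cover $X$ is an abelian threefold and $\mathcal{T}_X$ is nef. Otherwise $K_X$ is not nef, so by the cone and contraction theorems $X$ carries a $K_X$-negative extremal contraction $\varphi\colon X\to Y$, which for a smooth threefold is of fibre type or divisorial (there are no small contractions). Running through Mori's list and applying the inequality above to the relevant extremal rational curves --- a line in the exceptional divisor of the blow-up of a curve, a component of a reducible conic in a conic bundle, a line on a del Pezzo fibre of degree $\leq 7$ (or on an $\mathbb{F}_1$-fibre), a $(-2)$-curve in a degenerate quadric fibre, the ruling of an exceptional $\mathbb{P}(1,1,2)$, and so on, all of which have anticanonical degree $1$ or $0$ --- eliminates every possibility except: (i) $X$ Fano of Picard number one; (ii) $\varphi$ the blow-up of a smooth point; (iii) $\varphi$ a $\mathbb{P}^1$-bundle over a surface; (iv) $\varphi$ a $\mathbb{P}^2$-bundle or a smooth quadric-surface bundle over a curve.

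\emph{The Fano-type cases.} In case (i), a line on $X$ --- which exists by Shokurov's theorem, or by inspection of the Iskovskikh classification --- has anticanonical degree at least $2$, forcing the index of $X$ to be $\geq2$; by Kobayashi--Ochiai and Fujita, $X$ is then $\mathbb{P}^3$ or the three-dimensional quadric (both with $\mathcal{T}_X$ nef) or a del Pezzo threefold of degree $d\in\{1,\dots,5\}$. For $d\geq2$ one verifies that $\Omega^1_X\otimes\mathcal{O}_X(-K_X)$ is nef (for $d=3,4,5$ from the conormal sequence of the standard embedding together with the global generation of $\Omega^1_{\mathbb{P}^N}(2)$ and of $\Omega^1_{G(2,5)}(2)$, for $d=2$ through the double-cover structure); for $d=1$ one shows instead that $\Lambda^2\mathcal{T}_X$ is \emph{not} nef --- the reason degree one is excluded --- e.g. by producing a rational curve $C$ of anticanonical degree $2$ whose normal bundle $N_{C/X}=\mathcal{O}(c)\oplus\mathcal{O}(-c)$ has $c\geq3$, so that $\Lambda^2\mathcal{T}_X|_C$ has a negative summand. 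In case (ii), if $\varphi$ contracts $E\cong\mathbb{P}^2$ to a point $p\in Y$, then $0\leq(-K_X)^3=(-K_Y)^3-8$ and $-K_Y$ is nef (by the blow-up formula $-K_X\cdot\widetilde{C'}=-K_Y\cdot C'-2(E\cdot\widetilde{C'})$), so $Y$ is a weak Fano threefold and in particular uniruled; applying the inequality of the first paragraph to strict transforms of rational curves through $p$ gives $-K_Y\cdot C'\geq4$ for every such curve, while a covering family of minimal rational curves meets $p$ with anticanonical degree $\leq\dim Y+1=4$, so the minimal degree equals $4$ and the Cho--Miyaoka--Shepherd-Barron characterization yields $Y\cong\mathbb{P}^3$. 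Thus $X$ is the blow-up of $\mathbb{P}^3$ at a point, and $\Lambda^2\mathcal{T}_X$ is nef there by a direct check using the two extremal contractions of $X$.

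\emph{Where the difficulty lies.} Cases (iii) and (iv) are the serious ones, and I expect them to be the main obstacle. There one must combine the nefness of $-K_X$ with that of $\Lambda^2\mathcal{T}_X$ --- the latter tested on sections and multisections of $\varphi$ --- to pin down the base (which must itself carry a nef tangent bundle, hence is $\mathbb{P}^2$, $\mathbb{P}^1\times\mathbb{P}^1$, an abelian surface, a hyperelliptic surface, a $\mathbb{P}^1$-bundle over an elliptic curve, $\mathbb{P}^1$, or an elliptic curve) together with the relative bundle (which must be essentially nef after a twist), and then to \emph{prove} the positive statement that $\mathcal{T}_X$ is nef, i.e. that $X$ occurs in the Campana--Peternell list. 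In short, the purely negative eliminations via the rational-curve inequality are routine; the real work lies in (a) proving nefness of $\mathcal{T}_X$ in the fibre-type cases over a positive-dimensional base, a statement about an honest vector bundle on an arbitrary base rather than the mere absence of a bad curve, and (b) genuinely verifying, case by case, that $\Omega^1_X\otimes\mathcal{O}_X(-K_X)$ is nef for the del Pezzo threefolds of index two and degree $\geq2$ and for the blow-up of $\mathbb{P}^3$ at a point, while it fails for the del Pezzo threefold of degree one.
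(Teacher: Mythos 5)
Note first that the paper you are working from does not prove this statement at all: it is quoted verbatim from Campana--Peternell \cite{peternell2} and used as a black box (its only echoes inside the paper are Lemma \ref{2}, which is exactly your rational-curve inequality, and Lemma \ref{blowup}, which is the point-blow-up analysis). So there is no internal proof to compare against, and your proposal has to stand on its own as a reconstruction of \cite{peternell2}.

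Your skeleton is the right one and matches the standard route: $\det\Lambda^2\mathcal{T}_X=-2K_X$ nef; the identification $\Lambda^2\mathcal{T}_X\cong\Omega^1_X\otimes\mathcal{O}(-K_X)$ and the splitting-type computation giving $-K_X\cdot C\geqslant 2$ for every rational curve; the $K_X\equiv 0$ branch via numerical flatness; and the Mori case analysis killing E1, E3--E5, singular conic fibres and low-degree del Pezzo fibres. But as it stands the proposal is not a proof, for reasons you partly concede yourself. The genuine gaps are: (a) cases (iii) and (iv) --- $\mathbb{P}^1$-bundles over surfaces and $\mathbb{P}^2$- or quadric-bundles over curves --- are exactly where the content of the theorem lives (one must show $\mathcal{T}_X$ is actually nef there, not merely fail to find a bad curve), and you leave them entirely open; (b) the positive verifications that $\Lambda^2\mathcal{T}_X$ is nef for del Pezzo threefolds of degree $2\leqslant d\leqslant 5$ and fails for $d=1$ are asserted, not proved --- and these are delicate (for $d=1$ one cannot just ``produce'' the bad curve without an actual computation of normal bundles of conics on the sextic double Veronese cone); (c) in the point-blow-up case your appeal to Cho--Miyaoka--Shepherd-Barron is misapplied: you only control the degree of rational curves through the \emph{special} point $p$, whereas CMSB tests curves through a \emph{general} point, so you need either an unsplit-family/properness argument to move the bound to a general point, or better the Bonavero--Campana--Wi\'sniewski argument reproduced in Lemma \ref{blowup} (identify the second extremal ray, show it is a $\mathbb{P}^1$-bundle with $E$ a section, apply Lazarsfeld's theorem to the base). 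In short: correct strategy, correct eliminations, but the affirmative half of the classification --- which is where \cite{peternell2} spends its effort --- is missing.
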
  

In this paper, as a first step of classifying fourfolds with the second exterior power of tangent bundle nef, we treat the case where X is Fano fourfolds with Picard number $\rho(X)$ greater than one.
We get the following theorem.      
\begin{main}
Let X be a Fano fourfold with $\rho(X)\geqslant2$.
Assume that $\Lambda^2\mathcal{T}_X$ is nef.
Then X is the blowing-up of $\mathbb{P}^4$ at a point unless $\mathcal{T}_X$ is nef .
\end{main}

There is a problem about the nefness of $\Lambda^q\mathcal{T}_X$ posed by F. Campana and T. Peternell.
\begin{problem}[\cite{peternell2}, Problem 6.4]
Let X be a Fano manifold. Assume that $\Lambda^q\mathcal{T}_X$ is nef on every extremal rational curve.
Is then $\Lambda^q\mathcal{T}_X$ already nef ?
\end{problem}

In the proof of main theorem we can see that the following theorem also holds.

\begin{theorem}
Let X be a Fano fourfold with $\rho(X)\geqslant2$.
Assume that $\Lambda^2\mathcal{T}_X$ is nef on every extremal rational curves in X.
Then $\Lambda^2\mathcal{T}_X$ is nef.
\end{theorem}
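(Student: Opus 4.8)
The plan is to deduce this theorem from the proof of the Main Theorem rather than to rerun the whole machine: the point is that every numerical constraint used there to pin down $X$ already follows from the behaviour of $\Lambda^2\mathcal{T}_X$ along extremal rational curves, so the same list of possibilities is reached under the weaker hypothesis, after which one checks genuine nefness on that short list. Concretely, since $X$ is Fano, the Mori cone $\overline{NE}(X)$ is rational polyhedral and is generated by classes of extremal rational curves, each extremal ray $R$ giving a contraction $\varphi_R\colon X\to Y$. Running the classification of such contractions on a Fano fourfold — Wi\'sniewski-type bounds on the lengths of extremal rays, the structure of fibre-type contractions onto lower-dimensional Fano manifolds, and the exclusion of small contractions and of divisorial contractions other than the blow-up of a point — exactly as in the proof of the Main Theorem, but feeding in at each step only the inequality $\Lambda^2\mathcal{T}_X\cdot C\geqslant0$ for $C$ a minimal rational curve generating the ray $R$ (which is precisely what the hypothesis supplies), one concludes that either $\mathcal{T}_X$ is nef or $X$ is the blow-up of $\mathbb{P}^4$ at a point.

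It then remains to treat these two cases. If $\mathcal{T}_X$ is nef, then every exterior power of $\mathcal{T}_X$ is nef — more generally, any Schur functor applied to a nef vector bundle is nef — so in particular $\Lambda^2\mathcal{T}_X$ is nef. If $X$ is the blow-up of $\mathbb{P}^4$ at a point, one verifies directly, via the standard exact sequences relating $\mathcal{T}_X$ with the pullback of $\mathcal{T}_{\mathbb{P}^4}$ and with the exceptional divisor $E\cong\mathbb{P}^3$, that $\Lambda^2\mathcal{T}_X$ is nef; this is the same computation already carried out in establishing that the blow-up of $\mathbb{P}^4$ at a point genuinely occurs in the Main Theorem, and it can be quoted verbatim. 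Combining the two cases gives the assertion.

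The main obstacle is the bookkeeping in the first step: one must check that nowhere in the classification argument is positivity of $\Lambda^2\mathcal{T}_X$ on a \emph{non}-extremal curve secretly used, i.e.\ that each length estimate and each structural statement about the contractions is driven purely by intersection numbers against extremal rational curves, together with adjunction and deformation-of-rational-curves arguments that do not reference $\Lambda^2\mathcal{T}_X$ at all. Once this verification is made, the theorem follows, and it shows in particular that the Campana--Peternell Problem~6.4 has a positive answer for $q=2$ on Fano fourfolds with $\rho(X)\geqslant2$.
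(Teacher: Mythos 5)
Your proposal is correct and is essentially the paper's own argument: the paper simply remarks that every use of nefness of $\Lambda^2\mathcal{T}_X$ in the proof of the Main Theorem is a restriction to (rational curves in fibers of) extremal contractions, so the weaker hypothesis yields the same dichotomy, after which nefness is verified directly for $\mathcal{T}_X$ nef and for the blow-up of $\mathbb{P}^4$ at a point. The bookkeeping you flag as the "main obstacle" is exactly what the paper leaves implicit, so your write-up is, if anything, more explicit than the original.
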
  

\section*{Acknowledgements}
The author would like to express his gratitude to Professor Eiichi Sato for many useful discussions and much warm encouragement. 
\section*{Notation}
Throughout this paper we work over the complex number field $\mathbb{C}$.
We freely use the customary terminology in algebraic geometry.
We denote the Picard number of a variety $X$ by $\rho(X)$.  
We say that an extremal contraction is of $(m,n)$-type if the dimension of the exceptional locus is equal to $m$ and the dimension of the image of the exceptional locus is equal to $n$.
\\

\section{General Result}
Let $X$ be a smooth $n$-dimensional projective variety with $\Lambda^r\mathcal{T}_X$ nef for some integer $r$, 
$1\leqslant r \leqslant n$.
Since $\det(\Lambda^r\mathcal{T}_X)=-\displaystyle\binom{n-1}{r-1}K_X$ is also nef, the Kodaira dimension of $X$ is non-positive $\kappa(X)\leqslant 0$.
In this section, we classify the case where $\kappa(X)=0$.
Main theorem of this section is the following.
\begin{theorem}
Let X be a smooth n-dimensional projective variety with nef vector bundle $\Lambda^r\mathcal{T}_X$ $(1\leqslant r \leqslant n-1)$ and $\kappa(X)=0$. Then after taking some \'etale covering $f:\tilde{X}\rightarrow X$, $\tilde{X}$ is isomorphic to an abelian variety. 
\end{theorem}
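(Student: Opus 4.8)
The plan is to use the determinant trick to make $-K_X$ nef, deduce from $\kappa(X)=0$ that $K_X$ is torsion, pass to an \'etale cover with trivial canonical bundle, and then apply the Beauville--Bogomolov decomposition, ruling out all Calabi--Yau and holomorphic--symplectic factors by a positivity argument on the exterior powers of their tangent bundles.

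\textbf{Step 1: $K_X$ is torsion.} Since $\det(\Lambda^r\mathcal{T}_X)=-\binom{n-1}{r-1}K_X$ is nef and $\binom{n-1}{r-1}>0$, the divisor $-K_X$ is nef. As $\kappa(X)=0$ there are $m>0$ and an effective divisor $D$ with $mK_X\sim D$, so $-D\equiv -mK_X$ is nef and hence $D\cdot H^{n-1}\le 0$ for an ample $H$; but $D\ge 0$ forces $D\cdot H^{n-1}\ge 0$, with strict inequality unless $D=0$. Thus $mK_X\sim 0$. Replacing $X$ by the (\'etale) index-one cover attached to the torsion class $K_X$ — harmless, since the pullback of $\Lambda^r\mathcal{T}_X$ under a finite morphism is again nef and Kodaira dimension is an \'etale invariant — I may assume $K_X\cong\mathcal{O}_X$.

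\textbf{Step 2: decomposition and elimination of the non-abelian factors.} By the Beauville--Bogomolov structure theorem there is a finite \'etale cover $\tilde X\to X$ with
\[
\tilde X\;\cong\;A\times\textstyle\prod_i V_i\times\prod_j S_j,
\]
where $A$ is an abelian variety, each $V_i$ is a simply connected Calabi--Yau manifold with $\dim V_i\ge 2$ and $h^{p,0}(V_i)=0$ for $0<p<\dim V_i$, and each $S_j$ is a simply connected irreducible holomorphic symplectic manifold with $h^{p,0}(S_j)\le 1$ for all $p$. The bundle $\Lambda^r\mathcal{T}_{\tilde X}$ is still nef. Suppose some non-abelian factor $F$ occurs, put $d:=\dim F\ge 2$ and write $\tilde X=F\times Y$ with $\dim Y=n-d$. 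From $\mathcal{T}_{\tilde X}=p_F^*\mathcal{T}_F\oplus p_Y^*\mathcal{T}_Y$ we get $\Lambda^r\mathcal{T}_{\tilde X}=\bigoplus_{a+b=r}p_F^*\Lambda^a\mathcal{T}_F\otimes p_Y^*\Lambda^b\mathcal{T}_Y$; each summand is nef, and restricting a summand with $0\le b\le n-d$ to a slice $F\times\{y\}$ gives a direct sum of copies of $\Lambda^a\mathcal{T}_F$, which is therefore nef. Because $1\le r\le n-1$ and $d\ge 2$, one such index lies in the range $1\le a\le d-1$ (take $a=r$ if $r\le d-1$, and $a=d-1$ if $r\ge d$). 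For this $a$ the bundle $E:=\Lambda^a\mathcal{T}_F$ is nef with $c_1(E)=\binom{d-1}{a-1}c_1(\mathcal{T}_F)=0$, hence numerically flat by the theory of Demailly--Peternell--Schneider; since $F$ is projective and simply connected ($H^1(\mathcal{O}_F)=0$, so iterated extensions of trivial bundles split), $E$ must be trivial, $E\cong\mathcal{O}_F^{\oplus\binom{d}{a}}$. As $K_F\cong\mathcal{O}_F$ yields $\Lambda^a\mathcal{T}_F\cong\Omega_F^{d-a}$, we obtain $h^{d-a,0}(F)=\binom{d}{a}$, which is $\ge 1$ and is $\ge 2$ unless $a\in\{0,d\}$. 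Since $0<d-a<d$, this contradicts the vanishing of the intermediate Hodge numbers of a Calabi--Yau manifold and, using $\binom{d}{a}\ge 2$, also the bound $h^{p,0}(S_j)\le 1$. Hence no such $F$ exists, and $\tilde X\cong A$ is an abelian variety.

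The main obstacle is Step 2: the hypothesis on $\Lambda^r\mathcal{T}_X$ is mild, and turning it into Hodge-theoretic information about the hypothetical Calabi--Yau or symplectic summands genuinely requires the structure theory of nef vector bundles with $c_1=0$ (numerical flatness, and its consequence that such bundles are trivial over a simply connected base). Once that input and the Beauville--Bogomolov decomposition are in hand, the remaining combinatorics of choosing the right exterior power is routine, and Step 1 is standard.
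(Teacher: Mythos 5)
Your proof is correct, but it follows a genuinely different route from the paper's. The paper never invokes the Beauville--Bogomolov decomposition: after reducing to $K_X\cong\mathcal{O}_X$ exactly as you do in Step 1, it uses Yau's theorem to get $H$-semistability of $\mathcal{T}_X$ (Tsuji, Enoki), observes that $\Lambda^r\mathcal{T}_X$ is nef with trivial determinant and hence numerically flat, extracts $c_2(X)\cdot H^{n-2}=0$ from the identity $c_2(\Lambda^r\mathcal{T}_X)=\binom{d}{2}c_1^2(X)+\binom{n-2}{r-1}c_2(X)$ with $d=\binom{n-1}{r-1}$, and then applies Nakayama's criterion (semistable, $c_1=0$, $c_2\cdot H^{n-2}=0$) to conclude that $\mathcal{T}_X$ itself is numerically flat, at which point the Campana--Peternell structure theorem for $\kappa=0$ finishes. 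Your argument replaces the semistability-plus-Chern-class computation by the decomposition theorem together with the Demailly--Peternell--Schneider fact that a nef bundle with numerically trivial determinant on a simply connected base is trivial, and then kills each non-abelian factor by comparing $h^{d-a,0}(F)=\binom{d}{a}\geqslant 2$ against the Hodge numbers of Calabi--Yau and irreducible symplectic manifolds; the combinatorial choice of $a$ (namely $a=\min\{r,d-1\}$, using $r\leqslant n-1$) is exactly what makes the restriction to a factor legitimate, and your Step 2 also absorbs the $r=1$ case that the paper handles separately by citation. What each approach buys: the paper's route yields the stronger intermediate statement that $\mathcal{T}_X$ is numerically flat without ever decomposing $X$, while yours is arguably more transparent and avoids the Chern class identity, at the cost of invoking the heavier decomposition theorem (which itself rests on Yau's theorem, so the analytic input is comparable). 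Both proofs are complete.
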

\begin{proof}
If $r=1$, this is proved in \cite{peternell1}, Theorem 2.3.
Therefore we assume that $r\geqslant2$.
By the nefness of $-K_X$ and $\kappa(X)=0$, we know that the canonical line bundle $K_X$ is a torsion line bundle.
Therefore by taking some \'etale covering of $X$, we may assume that $K_X$ is trivial.
From the existence of K\"ahler-Einstein metric on X \cite{yau}, we know that $\mathcal{T}_X$ is H-semistable in the sense of Takemoto-Mumford with respect to any ample divisor H \cite{tsuji}, \cite{enoki}. On the other hand, $\Lambda^r\mathcal{T}_X$ is nef vector bundle with trivial determinant. 
Hence $\Lambda^r\mathcal{T}_X$ is a numerically flat vector bundle i.e. the dual vector bundle $(\Lambda^r\mathcal{T}_X)^{\vee}$ is also nef.
Therefore we can show that the second Chern class of $\Lambda^r\mathcal{T}_X$ are numerical trivial.
In particular we have 
\[c_1(\Lambda^r\mathcal{T}_X).H^{n-1}=-\displaystyle\binom{n-1}{r-1}K_X.H^{n-1}=0\] 
and 
\[c_2(\Lambda^r\mathcal{T}_X).H^{n-2}=\{\displaystyle\binom{d}{2}c_1^2(X)+\binom{n-2}{r-1}c_2(X)\}.H^{n-2}=0,\]
where $d=\dbinom{n-1}{r-1}$. 
By the triviality of $K_X$, we have the equality $c_2(X).H^{n-2}=0$. 
Since $\mathcal{T}_X$ is H-semistable, $c_1(X)=0$ and $c_2(X).H^{n-2}=0$, $\mathcal{T}_X$ is numerically flat, see \cite{nakayama} Theorem 4.1 in Section 4.
Therefore from theorem 2.3 in \cite{peternell1}, we have an \'etale covering $f:\tilde{X}\rightarrow X$ such that $\tilde{X}$ is isomorphic to an abelian variety.   
\end{proof}

By the theorem above, we have only to consider the case where $\kappa(X)=-\infty$.
This case is more difficult than the former one.
In the rest of this paper we treat the case where 
X is a Fano fourfold with $\rho(X)\geqslant2$.  

\section{Proof of Main Theorem}

At first, we consider the case where $X$ is obtained by a blowing-up of a smooth variety along a smooth subvariety.  
\begin{lemma}\label{blowup}
Let X be an n-dimensional Fano manifold. We assume that X is obtained by a blowing-up of a smooth manifold Y along a smooth subvariety Z.
If $\displaystyle \Lambda ^2 \mathcal{T}_X$ is nef, then Y is the projective space $\mathbb{P}^n$ and Z is a point.
\end{lemma}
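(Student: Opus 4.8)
The plan is to proceed in three stages: first show that $Z$ is a point, then that $Y$ is a Fano manifold, and finally that $Y\cong\mathbb{P}^n$. Write $\pi\colon X\to Y$ for the blow-up, $E$ for the exceptional divisor and $c=\operatorname{codim}_Y Z\geqslant 2$, so that $E$ is a $\mathbb{P}^{c-1}$-bundle over $Z$. For a fibre $F\cong\mathbb{P}^{c-1}$ of $E\to Z$ one has $\mathcal{N}_{E/X}|_F\cong\mathcal{O}_{\mathbb{P}^{c-1}}(-1)$ and $\mathcal{N}_{F/E}\cong\mathcal{O}_F^{\oplus\dim Z}$, and the first thing I would do is compute $\mathcal{T}_X|_F$. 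Feeding these into $0\to\mathcal{N}_{F/E}\to\mathcal{N}_{F/X}\to\mathcal{N}_{E/X}|_F\to 0$ and $0\to\mathcal{T}_F\to\mathcal{T}_X|_F\to\mathcal{N}_{F/X}\to 0$ and using $H^1(\mathbb{P}^{c-1},\mathcal{O}(1))=H^1(\mathbb{P}^{c-1},\mathcal{T}_{\mathbb{P}^{c-1}})=H^1(\mathbb{P}^{c-1},\mathcal{T}_{\mathbb{P}^{c-1}}(1))=0$, one obtains a splitting $\mathcal{T}_X|_F\cong\mathcal{T}_{\mathbb{P}^{c-1}}\oplus\mathcal{O}_F^{\oplus\dim Z}\oplus\mathcal{O}_{\mathbb{P}^{c-1}}(-1)$. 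Expanding $\Lambda^2$ of this direct sum produces $\mathcal{O}_{\mathbb{P}^{c-1}}(-1)^{\oplus\dim Z}$ as a direct summand of $\Lambda^2\mathcal{T}_X|_F=\Lambda^2(\mathcal{T}_X|_F)$; since a direct summand of a nef bundle is nef while $\mathcal{O}_{\mathbb{P}^{c-1}}(-1)$ is not, this forces $\dim Z=0$, i.e. $Z$ is a point, which I call $p$.

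For the second stage, note $K_X=\pi^*K_Y+(n-1)E$, and for an irreducible curve $C\subset Y$ with strict transform $\widetilde C$ one has $E\cdot\widetilde C=\operatorname{mult}_pC$, whence ampleness of $-K_X$ gives $-K_Y\cdot C-(n-1)\operatorname{mult}_pC=-K_X\cdot\widetilde C>0$. In particular $-K_Y$ is positive on every curve of $Y$, and $-K_Y\cdot C\geqslant(n-1)\operatorname{mult}_pC+1$ whenever $p\in C$. Because $X$ is Fano, $\overline{NE}(X)$ is rational polyhedral, so its image $\pi_*\overline{NE}(X)=\overline{NE}(Y)$ is polyhedral as well; $-K_Y$ is then positive on $\overline{NE}(Y)\setminus\{0\}$, so $-K_Y$ is ample by Kleiman's criterion and $Y$ is a Fano $n$-fold. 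Thus $X=\operatorname{Bl}_pY$ with $Y$ Fano.

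For the last stage, since $Y$ is Fano there is a rational curve $C$ through $p$ with $0<-K_Y\cdot C\leqslant n+1$. The bound just obtained forces $\operatorname{mult}_pC=1$ (so $C$ is smooth at $p$) and $-K_Y\cdot C\in\{n,n+1\}$. For a curve meeting $E$ transversally at a single point the strict transform satisfies $\mathcal{N}_{\widetilde C/X}\cong\mathcal{N}_{C/Y}\otimes\mathcal{O}_C(-1)$, so, identifying $\widetilde C\cong C$ with $\mathbb{P}^1$ and writing $\mathcal{N}_{C/Y}=\bigoplus_{i=1}^{n-1}\mathcal{O}(b_i)$, one gets $0\to\mathcal{O}(2)\to\mathcal{T}_X|_{\widetilde C}\to\bigoplus_i\mathcal{O}(b_i-1)\to 0$, where $\deg\mathcal{T}_X|_{\widetilde C}=-K_Y\cdot C-(n-1)\in\{1,2\}$. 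Now use two facts about the nef bundle $\Lambda^2\mathcal{T}_X|_{\widetilde C}$: at most one summand of $\mathcal{T}_X|_{\widetilde C}$ has negative degree and its two smallest degrees sum to a nonnegative number; and $\mathcal{N}_{\widetilde C/X}$, being a quotient bundle of $\mathcal{T}_X|_{\widetilde C}$, is nef as soon as $\mathcal{T}_X|_{\widetilde C}$ is. A short numerical case analysis then rules out $-K_Y\cdot C=n$ and forces $b_1=\dots=b_{n-1}=1$ in the case $-K_Y\cdot C=n+1$. Hence $Y$ carries a rational curve $C$ with $\mathcal{N}_{C/Y}\cong\mathcal{O}_{\mathbb{P}^1}(1)^{\oplus n-1}$ and $-K_Y\cdot C=n+1$; such a curve is free and its deformations sweep out $Y$, so the characterization of projective space of Cho--Miyaoka--Shepherd-Barron (in the form later refined by Kebekus) yields $Y\cong\mathbb{P}^n$.

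The routine parts are the first two stages and the numerics in the third. I expect the genuine difficulty to lie at the end of the third stage: the curve $C$ passes through the \emph{prescribed} point $p$, which need not be general, so one must either argue that a minimal rational curve through $p$ is still standard here, or produce a covering family directly, in order to apply the $\mathbb{P}^n$-characterization legitimately; and if such a $C$ turns out to be singular away from $E$, one should run the normal-bundle bookkeeping on the normalization and the normal sheaf of the induced morphism instead. (Throughout one may assume $n\geqslant 3$, the only range in which the hypothesis ``$\Lambda^2\mathcal{T}_X$ nef'' is nonempty, and the one relevant to this paper.)
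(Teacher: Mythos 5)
Your first two stages are sound. Stage 1 is the paper's own argument in a slightly different dress (the paper doesn't bother splitting $\mathcal{T}_X|_F$; it just composes the surjections $\Lambda^2\mathcal{T}_X|_F\to\Lambda^2\mathcal{N}_{F/X}\to\mathcal{N}_{E/X}|_F\otimes\mathcal{N}_{F/E}\cong\mathcal{O}_{\mathbb{P}^{c-1}}(-1)^{\oplus\dim Z}$, which avoids the $H^1$ computations). Stage 2 ($Y$ Fano via $K_X=\pi^*K_Y+(n-1)E$, pushforward of the polyhedral cone, Kleiman) is correct. But from there the paper takes a completely different road, following Bonavero--Campana--Wi\'sniewski: it never descends to $Y$ at all. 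Instead it picks an extremal ray $R$ with $E\cdot R>0$, uses Ando's theorem, Lemma \ref{2} and the $\dim Z\geqslant 1$ case to see that the associated contraction is a $\mathbb{P}^1$-bundle $\varphi:X\to W$, gets $W\cong\mathbb{P}^{n-1}$ from Lazarsfeld's theorem applied to the finite surjection $E\cong\mathbb{P}^{n-1}\to W$, shows $E$ is a section, and identifies $X\cong\mathbb{P}_{\mathbb{P}^{n-1}}(\mathcal{O}\oplus\mathcal{O}(1))$ by splitting the resulting extension. That route needs no characterization of $\mathbb{P}^n$ by rational curves.

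The genuine gap in your version is exactly where you suspected, and it is worse than a matter of the point $p$ being non-general: the inference ``$Y$ carries a very free rational curve of degree $n+1$ with $\mathcal{N}_{C/Y}\cong\mathcal{O}(1)^{\oplus(n-1)}$, hence $Y\cong\mathbb{P}^n$ by Cho--Miyaoka--Shepherd-Barron'' is not valid as stated. The existence of one covering family of such curves does not characterize $\mathbb{P}^n$ --- the variety $\mathbb{P}_{\mathbb{P}^{n-1}}(\mathcal{O}\oplus\mathcal{O}(1))$ you are trying to rule in for $X$ carries such a family itself, as do many other Fano $n$-folds. What CMSB/Kebekus actually requires is that \emph{every} rational curve through a \emph{general} point have anticanonical degree at least $n+1$. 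Your computation does give this at the fixed point $p$: for any rational curve $C\ni p$, Lemma \ref{2} applied to (the normalization of) $\widetilde C$ gives $-K_X\cdot\widetilde C\geqslant 2$, whence $-K_Y\cdot C\geqslant 2+(n-1)\operatorname{mult}_pC\geqslant n+1$; the single nice curve you construct is only needed to show such curves exist at all. To transfer the bound from $p$ to a general point, argue by contradiction: if a general point of $Y$ lay on a rational curve of degree $\leqslant n$, these curves would form a bounded dominating family, the evaluation map from the closure of its universal family in the Chow variety would be proper and surjective onto $Y$, so some effective rational $1$-cycle of total degree $\leqslant n$ would pass through $p$, and one of its irreducible components would be a rational curve through $p$ of degree $\leqslant n$ --- contradicting the bound at $p$. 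With that supplement (and the caveat that the $n=3$ splitting analysis admits the extra type $(2,1,-1)$, harmless for the degree bound), your argument closes; as written, the last step does not follow.
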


\begin{proof}
At first, we consider the case where $\dim Z\geqslant 1$.
Let $E\cong \mathbb{P}_Z(N_{Z/Y}^{\vee})$ be the exceptional divisor of the blowing-up $\phi:X=Bl_Z(Y)\rightarrow Y$. 
Let F be a general fiber of $\phi|_E$. Then we have an exact sequence 
\[0\rightarrow N_{F/E}\rightarrow N_{F/X}\rightarrow N_{E/X}|_F \rightarrow 0.\]
Hence we get 
\[\Lambda^2N_{F/X}\rightarrow N_{E/X}|_F\otimes N_{F/E} \rightarrow 0.\]
Combining this with the homomorphism $\Lambda^2\mathcal{T}_X|_F\rightarrow \Lambda^2N_{F/X}\rightarrow 0$, we obtain the surjective homomorphism of vector bundles
 \[\Lambda^2\mathcal{T}_X|_F\rightarrow N_{E/X}|_F\otimes N_{F/E}\rightarrow 0.\]
We put $c=n-1-\dim Z$.   Since $F\cong \mathbb{P}^{c}$ and $N_{E/X}|_F\otimes N_{F/E}\cong\mathcal{O}_{F}^{\oplus c}(-1)$ is not nef, we know that $\displaystyle \Lambda ^2 \mathcal{T}_X$ is not nef.
  
  Next, we consider the case where $\dim Z=0$. 
 It follows from the argument in \cite{campana} Theorem 1.1.
  For the convenience, we write the argument.  
  In this case, there exists an extremal rational curve $C$ such that $E.C>0$.
  In fact, there is a curve $C'$ such that $E.C'>0$ because $E$ is effective and nontrivial. 
  Since X is Fano, $C'$ is a linear combination of extremal rational curves with non-negative real coefficient.
  Hence one of extremal rational curves $C$ satisfy that $E.C>0$. 
  We consider the extremal contraction map $\varphi:X\rightarrow W$ associated with $C$. 
  Then, nontrivial fibers are one dimensional. 
  Therefore from Theorem \cite{ando} and first part of this proof, $\varphi$ is conic bundle which turn to be $\mathbb{P}^1$-bundle by Lemma\ref{2}.
  Since $\varphi|_E: E\cong\mathbb{P}^{n-1}\rightarrow W$ is finite surjective map, $W$ is isomorphic to $\mathbb{P}^{n-1}$ by Theorem \ref{lazarsfeld}. 
We prove that E is a section of $\varphi$. We assume that $\varphi : E\rightarrow W\cong\mathbb{P}^{n-1}$ is ramified.
In this case, there is a line $l\subseteq W$ we have a Hirzebruch surface $S:=\varphi^{-1}(l)$ such that
  $E\cap S$ is a multisection of $\pi|_S: S\rightarrow l$ and $\phi|_S(S\cap E)$ is a point.
  This is a contradiction since the negative section is the only curve contracted to a point by a birational map on a Hirzebruch surface.    
Therefore $E$ is a section of $\varphi$.
  Hence $X$ is isomorphic to a projectivization of a rank 2 vector bundle $\mathbb{P}_{W}(\mathcal{E})$ such that the tautological divisor on $\mathbb{P}_{W}(\mathcal{E})$ is $E$.   
In this case we have an exact sequence 
\[ 0\rightarrow \mathcal{O}_X\rightarrow \mathcal{O}_X(E)\rightarrow \mathcal{O}_{E}(E)\cong\mathcal{O}_E(-1)\rightarrow 0.\] 
From this exact sequence, we have the following exact sequence by push forward;
  \[ 0\rightarrow \mathcal{O}_W\rightarrow \varphi_*(\mathcal{O}_X(E))\cong\mathcal{E}\rightarrow \mathcal{O}_W(-1)\rightarrow 0.\] 
Because W is a projective space, this exact sequence splits and we have that $X\cong\mathbb{P}_{\mathbb{P}^{n-1}}(\mathcal{O}_{\mathbb{P}^{n-1}}\oplus\mathcal{O}_{\mathbb{P}^{n-1}}(1))$ which is a blowing-up of $\mathbb{P}^n$ at a point. 
Finally, we show that $\Lambda^2\mathcal{T}_X$ is nef.
Let $\pi:\mathbb{P}_{\mathbb{P}^{n-1}}(\mathcal{O}_{\mathbb{P}^{n-1}}\oplus\mathcal{O}_{\mathbb{P}^{n-1}}(1))\rightarrow \mathbb{P}^{n-1}$ be the natural projection.
Then we have the following exact sequence
\[0\rightarrow \mathcal{T}_{\pi}\rightarrow \mathcal{T}_X\rightarrow \pi^*\mathcal{T}_{\mathbb{P}^{n-1}}\rightarrow 0. \]
Therefore we get the exact sequence
\[0\rightarrow \mathcal{T}_{\pi}\otimes\pi^*\mathcal{T}_{\mathbb{P}^{n-1}}\rightarrow \Lambda^2\mathcal{T}_X\rightarrow \pi^*\Lambda^2\mathcal{T}_{\mathbb{P}^{n-1}}\rightarrow 0. \]
Let $\xi$ be the tautological line bundle on $\mathbb{P}_{\mathbb{P}^{n-1}}(\mathcal{O}_{\mathbb{P}^{n-1}}\oplus\mathcal{O}_{\mathbb{P}^{n-1}}(1))$.
Since $\mathcal{T}_{\pi}\otimes\pi^*\mathcal{T}_{\mathbb{P}^{n-1}}\cong2\xi\otimes\pi^*\mathcal{T}_{\mathbb{P}^{n-1}}(-1)$ is nef, $\Lambda^2\mathcal{T}_X$ is also nef.
\end{proof}
\begin{theorem}[\cite{lazarsfeld}, Theorem 4.1]\label{lazarsfeld}
Let X be a smooth projective variety of dimension $\geqslant 1$, and let
\[f:\mathbb{P}^n\longrightarrow X\]
be a surjective map. Then $X\cong \mathbb{P}^n$.
\end{theorem}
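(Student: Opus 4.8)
The plan is to reduce, in stages, to a known characterization of projective space. Write $f\colon\mathbb{P}^n\to X$ and $m=\dim X$. By generic smoothness (we work in characteristic zero) a general fibre $F=f^{-1}(x)$ is smooth of dimension $n-m$, and along $F$ the differential $df$ yields a short exact sequence $0\to \mathcal{T}_F\to \mathcal{T}_{\mathbb{P}^n}|_F\to f^*\mathcal{T}_X|_F\to 0$ with $f^*\mathcal{T}_X|_F\cong\mathcal{O}_F^{\oplus m}$ trivial. Thus $\mathcal{O}_F^{\oplus m}$ is a quotient of $\mathcal{T}_{\mathbb{P}^n}|_F$. Since $\mathcal{T}_{\mathbb{P}^n}$ is ample, its restriction to the closed subvariety $F$ is ample, hence so is every quotient; but a nonzero trivial bundle on a positive-dimensional projective variety is never ample. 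Therefore $\dim F=n-m=0$, that is, $m=n$ and $f$ is generically finite.

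First I would upgrade ``generically finite'' to ``finite''. Fix $H$ ample on $X$. As $f$ is surjective and generically finite, $f^*H$ is nef and big, and since $\operatorname{Pic}(\mathbb{P}^n)=\mathbb{Z}\,\mathcal{O}(1)$ we get $f^*H\cong\mathcal{O}(d)$ with $d\ge 1$, which is ample. If $f$ contracted a curve $C$ then $f^*H\cdot C=H\cdot f_*C=0$, contradicting ampleness; hence $f$ has only finite fibres and is a finite surjective morphism between smooth projective $n$-folds.

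Next I would record the consequences of finiteness. Because $f$ is finite surjective, $f^{*}\colon N^1(X)\to N^1(\mathbb{P}^n)=\mathbb{R}$ is injective, so $\rho(X)=1$. The ramification formula $K_{\mathbb{P}^n}=f^*K_X+R$ with $R\ge 0$ effective gives $f^*(-K_X)=\mathcal{O}(n+1)+R$, which is ample; since $f$ is finite this forces $-K_X$ ample, so $X$ is a Fano manifold of Picard number one. Writing $R=\mathcal{O}(r)$ with $r\ge 0$, the image $C=f(\ell)$ of a general line $\ell$ is a rational curve through a general point of $X$, and $(\deg f|_\ell)\,(-K_X\cdot C)=f^*(-K_X)\cdot \ell=n+1+r\ge n+1$; as $f|_\ell$ is birational onto its image for general $\ell$, one gets $-K_X\cdot C\ge n+1$. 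I would then feed this into a characterization of $\mathbb{P}^n$ among Fano manifolds of Cho--Miyaoka--Shepherd-Barron / Mori type.

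The hard part is this last step. The correct characterization is phrased in terms of the \emph{minimal} anticanonical degree of rational curves through a general point, which always satisfies $\le n+1$; merely producing \emph{some} curve of degree $\ge n+1$ is not enough (a smooth quadric carries conics of degree $2n\ge n+1$ through a general point). So the inequality $-K_X\cdot C\ge n+1$ must be secured for the genuinely minimal curves, not just for images of lines. Controlling this requires analyzing, via the ramification divisor and Riemann--Hurwitz along a dominating component $\tilde C\subset f^{-1}(C)$, how $\deg f|_{\tilde C}$ can exceed the $\mathcal{O}(1)$-degree of $\tilde C$; the ramified, high-degree behaviour already visible in the squaring map $\mathbb{P}^n\to\mathbb{P}^n$ (where suitable lines become double covers of lines) is exactly what defeats a naive degree count. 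I expect the argument must instead exploit that $f$ is unramified over a general point, so that the lines through a general preimage point sweep out \emph{all} tangent directions at $f(p)$, forcing the variety of minimal rational tangents to be maximal and pinning down $X\cong\mathbb{P}^n$.
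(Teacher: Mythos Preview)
The paper does not prove this theorem; it is quoted from \cite{lazarsfeld} and invoked as a black box inside the proof of Lemma~\ref{blowup}. There is therefore no ``paper's own proof'' to compare your attempt against.

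Regarding your attempt on its own terms: the reduction to a finite morphism and the deduction that $X$ is Fano of Picard number one are correct and standard. The genuine gap, which you yourself flag, is the final step. Producing \emph{some} rational curve $C$ through a general point with $-K_X\cdot C\ge n+1$ is not enough for the Cho--Miyaoka--Shepherd-Barron characterization, which requires the \emph{minimal} such degree to equal $n+1$; and your images of lines need not be minimal (your own quadric example makes this plain). Your proposed workaround via the variety of minimal rational tangents is only a heuristic---you do not actually establish that the minimal curves through a general point of $X$ lift to curves of $\mathcal{O}(1)$-degree one on $\mathbb{P}^n$, nor that the VMRT is all of $\mathbb{P}(T_xX)$---so the argument as written is incomplete. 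It is also anachronistic: Lazarsfeld's 1984 proof predates both CMSB and VMRT theory, and instead proceeds by combining a Barth--Lefschetz type vanishing theorem for branched covers of $\mathbb{P}^n$ (obtained from the ampleness of the bundle $(f_*\mathcal{O}_{\mathbb{P}^n}/\mathcal{O}_X)^\vee$) with the Kobayashi--Ochiai characterization of projective space by Fano index.
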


\begin{theorem}[\cite{ando}, Theorem 2.3]\label{ando}
Let $f: X\rightarrow Y$ be an elementary contraction on a smooth variety. Assume that the dimension of a fiber of $f$ is at most one. If $\mathrm{dim}f(E)=\mathrm{dim}E-1=n-2$ and $f_E$ is equi-dimensional where E is the exceptional locus of $f$.
Then both $Y$ and $f(E)$ are non-singular, and moreover $f:X\rightarrow Y$ is the blowing-up along the  
smooth center $f(E)$.
\end{theorem}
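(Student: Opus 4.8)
The plan is to run the standard analysis of an elementary ($K_X$-negative) contraction whose fibers have dimension at most one, specialized to the divisorial case in which $f|_E$ is equidimensional over $f(E)$. Write $R$ for the contracted ray and $\ell(R)$ for its length. First I would bound $\ell(R)$: choosing a one-dimensional fiber $F$ of $f|_E$ and invoking the Ionescu--Wi\'sniewski fiber-dimension inequality $\dim F+\dim E\geqslant\dim X+\ell(R)-1$, with $\dim F=1$, $\dim E=n-1$ and $\dim X=n$, forces $\ell(R)=1$. Hence a general fiber $C_0$ of $f|_E$ is a smooth rational curve with $-K_X\cdot C_0=1$; from the exact sequence $0\to\mathcal{T}_{C_0}\to\mathcal{T}_X|_{C_0}\to N_{C_0/X}\to0$ one gets $\det N_{C_0/X}\cong\mathcal{O}_{\mathbb{P}^1}(-1)$, and, because $-E$ is relatively ample, the quotient line bundle $\mathcal{O}_E(E)|_{C_0}=N_{E/X}|_{C_0}$ has negative degree.

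The next step is to pin down these normal bundles and the structure of $E$. The curves $C_0$ sweep out $E$ in a family of dimension $\dim f(E)=n-2$, which bounds $h^0$ of the normal bundles; combined with the degree constraints above and Grothendieck's splitting theorem on $\mathbb{P}^1$, this forces $N_{C_0/X}\cong\mathcal{O}_{\mathbb{P}^1}(-1)\oplus\mathcal{O}_{\mathbb{P}^1}^{\oplus(n-2)}$, hence $\mathcal{O}_E(E)|_{C_0}\cong\mathcal{O}_{\mathbb{P}^1}(-1)$, $N_{C_0/E}\cong\mathcal{O}_{\mathbb{P}^1}^{\oplus(n-2)}$, $E\cdot C_0=-1$ and, by adjunction on $E$, $-K_E\cdot C_0=2$. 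Using the length-one condition together with a bend-and-break argument, one then checks that \emph{every} fiber of $f|_E$ is a reduced, irreducible conic, hence a smooth $\mathbb{P}^1$; an equidimensional $\mathbb{P}^1$-fibration with these numerical invariants is a $\mathbb{P}^1$-bundle in the Zariski topology, so $E$ and $f(E)$ are both smooth and $E\cong\mathbb{P}_{f(E)}(\mathcal{E})$ for a rank-two bundle $\mathcal{E}$, with $\mathcal{O}_E(E)$ the appropriate twist of the tautological line bundle.

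Finally I would identify $f$ with the blow-up. Relative Kawamata--Viehweg vanishing, applied to the divisor $-E$ (which is ample along the nontrivial fibers), gives $R^if_*\mathcal{O}_X(-kE)=0$ for $i>0$ and $k\geqslant0$, so $\mathcal{I}:=f_*\mathcal{O}_X(-E)\subseteq\mathcal{O}_Y$ is an ideal sheaf cosupported on $f(E)$; a base-change computation along the $\mathbb{P}^1$-bundle $E$ shows that, locally, $\mathcal{I}$ is generated by a regular sequence of length two. Hence $f(E)\subseteq Y$ is a smooth subvariety of codimension two and $Y$ is smooth. Since $f^{-1}\mathcal{I}\cdot\mathcal{O}_X=\mathcal{O}_X(-E)$ is invertible, the universal property of blowing up yields a factorization $X\to\mathrm{Bl}_{f(E)}(Y)\to Y$; this is an isomorphism away from $E$, and both sides have relative Picard number one over $Y$ because $f$ is elementary, so it is an isomorphism.

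The step I expect to be the main obstacle is the second one: controlling the normal bundle $N_{C_0/X}$ and showing that every fiber of $f|_E$---including the fibers that may jump over special points of $f(E)$---is a smooth $\mathbb{P}^1$, equivalently that $E$ itself is smooth. This is where one cannot avoid a genuine deformation-theoretic and local analysis (semicontinuity of $h^0$ of the normal bundle, the length-one hypothesis, and crucially the equidimensionality of $f|_E$); once the $\mathbb{P}^1$-bundle structure on $E$ is established, reconstructing $Y$ and recognizing $f$ as a blow-up is essentially bookkeeping with direct images and vanishing theorems.
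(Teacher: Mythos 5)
This statement is quoted verbatim from Ando's paper (\cite{ando}, Theorem 2.3) and the present paper gives no proof of it, so there is nothing internal to compare against; I can only measure your outline against the standard argument in the literature. On that score your architecture is essentially the right one: get length one, compute the normal bundle of a general fiber, upgrade to all fibers to exhibit $E$ as a $\mathbb{P}^1$-bundle with $\mathcal{O}_E(E)$ of degree $-1$ on fibers, then reconstruct $Y$ (Ando does this last step via the Fujiki--Nakano contractibility criterion rather than via $f_*\mathcal{O}_X(-E)$ and the universal property, but your route is also standard). Note that your first step is anachronistic: the Ionescu--Wi\'sniewski inequality postdates Ando, who instead gets the degree-one statement directly from his Lemmas 1.5 and 2.2 --- which this paper separately quotes as Theorem 3.8 and which already assert that \emph{every} one-dimensional fiber of a birational contraction is an irreducible smooth rational curve $C$ with $-K_X\cdot C=1$; invoking that would let you skip your bend-and-break paragraph entirely. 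Three points where your sketch is thinner than it should be: (i) the fibers are \emph{lines}, not conics, with respect to $-K_X$ (anticanonical degree $1$, not $2$); (ii) to pass from ``the deformation space of $C_0$ through a fixed point is $0$-dimensional'' to ``$h^0(N_{C_0/X}\otimes\mathfrak{m}_x)=0$'' you need unobstructedness, i.e.\ $H^1(N_{C_0/X})=0$, which must be fed into the argument before you know the splitting type; and (iii) the smoothness of $f(E)$ is not a consequence of the fibration being equidimensional with smooth fibers --- the clean way is to identify $f(E)$ with the (smooth, since $H^1(N_{C/X})=0$ for every fiber $C$) component of the Hilbert scheme parametrizing the fibers, via the evaluation map from the universal family onto $E$. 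With those repairs your plan does prove the theorem, and you have correctly located the genuine difficulty in the passage from the general fiber to all fibers.
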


\begin{theorem}[\cite{ando}, Theorem 3.1]\label{ando2}
Let $f: X\rightarrow Y$ be an elementary contraction on a smooth variety. Assume that the dimension of a fiber of $f$ is at most one. If $\mathrm{dim}Y=n-1$ and $f$ is equi-dimensional then $Y$ is non-singular and $f$ induces a conic bundle structure on $X$.
\end{theorem}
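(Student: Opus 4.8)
The plan is to run the structure theory of $K_X$-negative elementary contractions: reduce everything to the geometry of the fibres of $f$, then use the classification of fibres of small anticanonical degree both to recognise $f$ as a conic bundle and to force $Y$ to be smooth.

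First I would record the basic facts. Since $\dim Y=n-1<\dim X$, the contraction $f$ is of fibre type, so $-K_X$ is $f$-ample; hence $f_*\mathcal O_X=\mathcal O_Y$ (so $Y$ is normal and all fibres are connected) and $R^if_*\mathcal O_X=0$ for $i>0$ by relative Kawamata--Viehweg vanishing, whence $h^1(\mathcal O_F)=0$ for every fibre $F$. A general fibre is smooth (characteristic $0$), connected, one-dimensional and rationally connected, hence $\cong\mathbb P^1$ with trivial normal bundle, so $-K_X$ has degree $2$ on it; let $\ell$ be a minimal rational curve spanning the contracted ray, so $-K_X\cdot\ell\in\{1,2\}$ and, since $\rho(X/Y)=1$, every fibre is numerically proportional to $\ell$.

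If $-K_X\cdot\ell=2$, then no component of any fibre can have smaller anticanonical degree, so every fibre is an irreducible curve of anticanonical degree $2$ and arithmetic genus $0$, i.e.\ a smooth conic $\cong\mathbb P^1$; thus $f$ is smooth, and since $X$ is regular this forces $Y$ regular and $f$ a $\mathbb P^1$-bundle in the \'etale topology. If $-K_X\cdot\ell=1$, the general fibre is still $\cong\mathbb P^1$ (numerically $2\ell$) while the special fibres are line pairs $\ell_1\cup\ell_2$ or double lines --- degenerate conics; over the smooth locus of $f$ the base is again regular, and along the discriminant divisor I would invoke Ando's local analysis of the family of the curves $\ell_i$ (whose normal bundle is $\mathcal O(-1)\oplus\mathcal O^{\oplus n-2}$, so their Hilbert scheme is smooth of dimension $n-2$), combined with $\rho(X/Y)=1$ and the equidimensionality of $f$, to conclude that the discriminant and hence $Y$ are smooth. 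Either way every fibre of $f$ is a conic.

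With $Y$ now smooth, put $\mathcal E:=f_*\omega_{X/Y}^{-1}$: since $h^0(F,\omega_F^{-1})=3$ for each of the three fibre types and $h^1=0$, cohomology and base change makes $\mathcal E$ a rank-$3$ bundle commuting with base change, and the relative anticanonical morphism embeds $X$ in $\mathbb P_Y(\mathcal E)$ as a divisor meeting every fibre $\mathbb P^2$ in a conic --- a conic bundle structure on $X$. The hard part, as usual in this circle of results, is the regularity of $Y$ along the degenerate fibres in the length-one case: this does \emph{not} follow from smoothness of $X$ and flatness of $f$ alone (flat morphisms with local-complete-intersection fibres and smooth total space can have a singular base), and genuinely uses that $f$ contracts a single extremal ray together with the fine deformation theory of the minimal rational curves sweeping out the fibres --- which is exactly the content of Ando's argument.
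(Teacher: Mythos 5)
The paper offers no proof of this statement at all: it is quoted verbatim from \cite{ando} (Theorem 3.1) and used as a black box, so there is no internal argument to compare yours against. Judged on its own terms, your outline reproduces the standard skeleton of Ando's proof correctly: fibre type follows from $\dim Y<\dim X$, relative vanishing gives $R^if_*\mathcal{O}_X=0$ and hence $h^1(\mathcal{O}_F)=0$, the general fibre is a $(-K_X)$-conic, one splits on the length of the ray, and at the end $X$ embeds into $\mathbb{P}_Y(f_*\omega_{X/Y}^{-1})$ as a divisor of relative degree $2$. But the step that carries essentially all of the difficulty --- showing, in the length-one case, that \emph{every} scheme-theoretic fibre is a (possibly degenerate) conic and that $Y$ is smooth along the images of the reducible and non-reduced fibres --- is exactly the step you discharge by ``invoking Ando's local analysis.'' Since that analysis \emph{is} the theorem, what you have is a correct road map rather than a proof; you say as much yourself in the last sentence.

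Two of your supporting claims also need correction. The parenthetical reason you give for why smoothness of $Y$ is delicate is false as stated: regularity descends along faithfully flat morphisms, so a flat $f$ with smooth total space cannot have a singular base. The genuine obstruction is that flatness itself is not yet available --- miracle flatness requires $Y$ regular (equidimensionality plus $X$ Cohen--Macaulay is not enough over a merely normal base), which is precisely what is to be proved; Ando breaks this circle by a direct study of the deformations of the components $\ell_i$ of the degenerate fibres and of the sheaf $f_*\mathcal{O}_X(-K_X)$, not by a flatness-plus-descent argument. Secondly, the discriminant of a conic bundle need not be smooth (it is typically nodal where double lines occur), so ``the discriminant and hence $Y$ are smooth'' is not the right implication; the conclusion to aim for is smoothness of $Y$ alone. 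Neither slip is fatal to the strategy, but both sit exactly at the point where the proposal is not self-contained.
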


We begin the proof of main theorem.  
\subsection{(2,0)-type}
In this subsection we consider the case where $X$ has a $(2,0)$-type extremal contraction.

\begin{theorem}\label{2,0case}
Let X be a Fano fourfold with $(2,0)$-type contraction. Then $\displaystyle \Lambda ^2 \mathcal{T}_X$ is not nef.
\end{theorem}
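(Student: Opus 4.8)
The strategy is to analyze the geometry of a $(2,0)$-type extremal contraction $\varphi : X \to W$ on a Fano fourfold and exhibit a quotient bundle of $\Lambda^2\mathcal{T}_X$ restricted to a suitable curve that fails to be nef. Since the contraction has $2$-dimensional exceptional locus $E$ mapping to a point (or points) $p \in W$, the general nontrivial fiber $F$ is a surface, and by the classification of fiber-type and divisorial contractions with low-dimensional fibers (the adjunction-theoretic results of Andreatta--Wiśniewski / Ando, combined with the fact that $X$ is Fano so $\rho(X) \geqslant 2$ gives genuine extremal rays), $F$ must be one of a short list: $\mathbb{P}^2$ with normal bundle $\mathcal{O}(-1)^{\oplus 2}$, a quadric $\mathbb{Q}^2$, $\mathbb{P}^1\times\mathbb{P}^1$, or $\mathbb{P}^2$ with normal bundle $T_{\mathbb{P}^2}(-2)$ (the two incidences $\mathbb{P}^3 \dashrightarrow \mathbb{P}^3$ type contractions). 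First I would invoke Lemma~\ref{blupup}... actually Lemma~\ref{blowup} has already disposed of the case where $\varphi$ is the blow-up of a smooth fourfold along a point, so here I may assume $\dim E = 2$ and $\varphi$ is \emph{not} such a blow-up; this forces the contraction to be non-divisorial in the naive sense or to have positive-dimensional central fiber — i.e. the exceptional locus $E$ itself is the surface being contracted.

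Next, for each possibility for $(F, N_{F/X})$ I would restrict to a line $\ell \subset F$ (a line in $\mathbb{P}^2$, or a ruling of the quadric) and use the conormal/normal exact sequences. From $0 \to N_{\ell/F} \to N_{\ell/X} \to N_{F/X}|_\ell \to 0$ and the surjection $\Lambda^2\mathcal{T}_X|_\ell \twoheadrightarrow \Lambda^2 N_{\ell/X}$, one obtains a surjection $\Lambda^2\mathcal{T}_X|_\ell \twoheadrightarrow N_{\ell/F}\otimes N_{F/X}|_\ell$ (and also onto $\Lambda^2 N_{F/X}|_\ell$ when $\mathrm{rk}\, N_{F/X}=2$). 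Since $F$ is contracted to a point, $N_{F/X}|_\ell$ has negative-degree summands on $\ell \cong \mathbb{P}^1$; tensoring with the (at most degree $1$) bundle $N_{\ell/F}$ still produces a quotient line bundle of negative degree on $\ell$. Concretely: in the $\mathbb{P}^2$, $\mathcal{O}(-1)^{\oplus 2}$ case this is exactly the computation already carried out in the first half of the proof of Lemma~\ref{blowup} (giving $\mathcal{O}_{\mathbb{P}^1}(-1)$ as a quotient); for the quadric and $\mathbb{P}^1\times\mathbb{P}^1$ cases the normal bundle restricted to a ruling line is $\mathcal{O}(-1)^{\oplus 2}$ as well, and for the $T_{\mathbb{P}^2}(-2)$ case a direct Chern-class check on a line gives a negative quotient. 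In every case $\Lambda^2\mathcal{T}_X|_\ell$ surjects onto a line bundle of negative degree, contradicting nefness.

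The main obstacle I anticipate is the \emph{classification step}: pinning down precisely which pairs $(F, N_{F/X})$ can occur for a $(2,0)$ extremal contraction on a smooth Fano fourfold, and in particular ruling out or handling the possibility that the general fiber $F$ is reducible, non-reduced, or not normal, and that the scheme structure on $E$ near $p$ is nasty. I would lean on the detailed analysis of extremal contractions of fourfolds with two-dimensional fibers (Kawamata, Andreatta--Wiśniewski, Takagi) to reduce to the list above, or alternatively argue more cheaply: since $-K_X$ is $\varphi$-ample and $F$ is a general fiber of the contraction restricted to $E$, adjunction forces $-K_X|_F$ to be ample with $(-K_X|_F)^2$ small, and the classification of such polarized surfaces is short. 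Once the finitely many cases are listed, the nefness contradiction in each is a routine normal-bundle computation of the type already appearing in Lemma~\ref{blowup}. A secondary subtlety is that the curve $\ell$ I choose must genuinely be a curve in $X$ on which I can compute $\Lambda^2\mathcal{T}_X|_\ell$ — this is automatic since $\ell \subset F \subset X$ and $F$ is smooth along the general $\ell$, so the sequence $0 \to \mathcal{T}_F|_\ell \to \mathcal{T}_X|_\ell \to N_{F/X}|_\ell \to 0$ is exact and locally split.
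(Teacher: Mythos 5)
Your proposal is correct and follows essentially the same route as the paper: the only input needed is Kawamata's classification of small elementary contractions of smooth fourfolds (Theorem~\ref{kawamata}), which says the exceptional locus of a $(2,0)$-type contraction is a disjoint union of planes $E\cong\mathbb{P}^2$ with $N_{E/X}\cong\mathcal{O}_{\mathbb{P}^2}(-1)^{\oplus 2}$, after which the surjection $\Lambda^2\mathcal{T}_X|_E\to\Lambda^2N_{E/X}\cong\mathcal{O}_{\mathbb{P}^2}(-2)$ contradicts nefness. The only inefficiency is that a $(2,0)$-type contraction is by definition small (a surface contracted to points), so the extra cases you list (the quadric, $\mathbb{P}^1\times\mathbb{P}^1$, and $\mathcal{T}_{\mathbb{P}^2}(-2)$ normal bundle) belong to the classification of two-dimensional fibers of divisorial or fiber-type contractions and never arise here, and there is no need to restrict to a line in $E$ since $\Lambda^2N_{E/X}$ is already a negative line bundle on all of $E$.
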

To show this, we use the following theorem due to Kawamata.

\begin{theorem}[\cite{kawamata}, Theorem 1.1]\label{kawamata}
Let $X$ be a non-singular projective variety of dimension four defined over $\mathbb{C}$, and let $f:X\rightarrow Y$ be a small elementary contraction. Then the exceptional locus of $f$ is a disjoint union of its irreducible components $E_i$ $(i=1,\cdots,n)$ such that $E_i\cong\mathbb{P}^2$ and $N_{E_i/X}\cong\mathcal{O}_{\mathbb{P}^2}(-1)^{\bigoplus 2}$, where $N_{E_i/X}$ denotes the normal bundle.
\end{theorem}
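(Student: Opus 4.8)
The plan is to combine a dimension estimate for fibers of extremal contractions with a recognition criterion for $\mathbb{P}^2$ and a normal bundle computation. Write $E=\mathrm{Exc}(f)$ for the exceptional locus, $B=f(E)$ for its image, and $l$ for the length of the extremal ray contracted by $f$. Since $f$ is small, $E$ has codimension at least two in $X$, so $\dim E\leqslant 2$. I would apply the Ionescu--Wi\'sniewski inequality $\dim F+\dim E\geqslant \dim X+l-1$ to a fiber $F$ of $f|_E\colon E\to B$. Using $\dim F\leqslant \dim E\leqslant 2$, $\dim X=4$ and $l\geqslant 1$, one gets $2\dim E\geqslant \dim F+\dim E\geqslant 3+l\geqslant 4$, forcing $\dim E=2$; feeding this back gives $\dim F\geqslant 3+l-2\geqslant 2$, hence $\dim F=2$ and $l=1$. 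In particular $B$ is finite, so each connected component of $E$ is a single two–dimensional fiber contracted to a point, and fibers over distinct points are disjoint. Since the contraction has connected fibers, once the irreducible components are shown to be disjoint each fiber will be irreducible.

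Next I would show that each two–dimensional fiber $F$ is isomorphic to $\mathbb{P}^2$. Because $-K_X$ is $f$-ample, $L:=-K_X|_F$ is ample on $F$ with $L\cdot C=1$ for every curve $C\subseteq F$ (these generate $R$ and have length one). Thus $F$ is a projective surface swept out by rational curves of minimal anticanonical degree one, all numerically proportional in $X$. The strategy is to analyze this family: a standard deformation count shows the minimal rational curves move in at least a two–dimensional family covering $F$, with a one–dimensional subfamily through a general point, which is precisely the incidence pattern of lines in $\mathbb{P}^2$. Combining this with the Kawamata--Viehweg vanishing theorem to control $f_*\mathcal{O}_X$ and the rationality of the singularity of $Y$ at $f(F)$, one identifies the polarized surface $(F,L)$ with $(\mathbb{P}^2,\mathcal{O}_{\mathbb{P}^2}(1))$; in particular $F$ is smooth and $K_X|_F\cong\mathcal{O}_{\mathbb{P}^2}(-1)$.

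Finally, with $F\cong\mathbb{P}^2$ and $K_X|_F\cong\mathcal{O}_{\mathbb{P}^2}(-1)$ in hand, I would compute the rank–two normal bundle $N:=N_{F/X}$. Adjunction for the smooth codimension–two subvariety $F\subseteq X$ gives $K_F=K_X|_F\otimes\det N$, i.e. $\mathcal{O}(-3)=\mathcal{O}(-1)\otimes\det N$, so $\det N\cong\mathcal{O}_{\mathbb{P}^2}(-2)$. Since $F$ is an isolated fiber contracted to a point it is rigid, so $H^0(\mathbb{P}^2,N)=0$, and contractibility of $F$ forces $N$ to be negative in a suitable sense; combining this with $\det N\cong\mathcal{O}(-2)$ and the fact that the singularity produced on $Y$ is terminal (the contraction is a flipping contraction), I would rule out the unbalanced and non–split possibilities and conclude $N_{F/X}\cong\mathcal{O}_{\mathbb{P}^2}(-1)^{\oplus 2}$. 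The same negativity shows that two such $\mathbb{P}^2$'s cannot meet, giving the disjointness of the components asserted in the statement.

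The main obstacle is the middle step: proving that the two–dimensional fiber is actually $\mathbb{P}^2$, and in particular smooth. The dimension count and the adjunction computation are essentially formal once this is known, but recognizing $\mathbb{P}^2$ from the family of length–one rational curves requires genuine input from the deformation theory of rational curves together with vanishing theorems, and is the technical heart of the argument.
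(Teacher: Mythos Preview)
This theorem is not proved in the paper; it is quoted verbatim from Kawamata's 1989 article and used as a black box in the short proof of Theorem~\ref{2,0case}. There is therefore no ``paper's own proof'' to compare your proposal against.

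As a sketch of how one might prove Kawamata's result, your outline has the right shape and you correctly locate the hard step (identifying the two--dimensional fiber with $\mathbb{P}^2$). A few points deserve tightening. First, the assertion ``$L\cdot C=1$ for every curve $C\subseteq F$'' is false as written; you mean this for the rational curves generating the ray, not for all curves. Second, the normal bundle step is underspecified: knowing $\det N\cong\mathcal{O}(-2)$ and $H^0(N)=0$ does not exclude $\mathcal{O}\oplus\mathcal{O}(-2)$ or non--split bundles with $c_1=-2$; one needs a genuine negativity statement (e.g.\ that the conormal bundle $N^\vee$ is ample, which is tied to the contractibility of $F$ via a Grauert--type criterion, or a direct cohomological computation on the formal neighbourhood as in Kawamata's paper) to force $N\cong\mathcal{O}(-1)^{\oplus 2}$. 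Third, historically the Ionescu--Wi\'sniewski inequality postdates Kawamata's paper, so his original argument for $\dim E=\dim F=2$ proceeds differently, though your use of it is a legitimate modern shortcut.
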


\begin{proof}[Proof of Theorem \ref{2,0case}]
Let $f:X\rightarrow Y$ be the $(2,0)$-type contraction and $E$ an irreducible component of the exceptional locus of $f$.
Then, by Theorem \ref{kawamata} we have $E\cong\mathbb{P}^2$ and $N_{E/X}\cong\mathcal{O}_{\mathbb{P}^2}(-1)^{\bigoplus 2}$.
From the exact sequence 
\[0\rightarrow \mathcal{T}_E\rightarrow \mathcal{T}_X|_E\rightarrow N_{E/X}\rightarrow 0, \]
we have a surjective homomorphism of vector bundles 
\[\Lambda^2\mathcal{T}_X|_E\rightarrow \Lambda^2N_{E/X}\rightarrow 0.\] 
Since $\Lambda^2N_{E/X}\cong\mathcal{O}_{\mathbb{P}^2}(-2)$ is not nef, $\Lambda ^2 \mathcal{T}_X$ is also not nef.
\end{proof}

\subsection{(3,2)-type}
In this subsection we consider the case where $X$ has a $(3,2)$-type extremal contraction.

\begin{theorem}
Let X be a Fano fourfold with $(3,2)$-type contraction. Then $\displaystyle \Lambda ^2 \mathcal{T}_X$ is not nef.
\end{theorem}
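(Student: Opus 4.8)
The plan is to produce on $X$ a rational curve of anticanonical degree $1$ and then to show that the mere existence of such a curve already forbids $\Lambda^{2}\mathcal{T}_X$ from being nef. Write $f\colon X\to Y$ for the $(3,2)$-type contraction, $E$ for its exceptional divisor (so $\dim E=3$), $S:=f(E)$ (so $\dim S=2$), and $R$ for the contracted extremal ray; a general fibre of $f|_E\colon E\to S$ is then $1$-dimensional. There is one especially transparent sub-case, namely when $f$ is equidimensional: then Theorem \ref{ando} identifies $f$ with the blowing-up of the smooth fourfold $Y$ along the smooth surface $S$, and since $\dim S=2\neq 0$ this contradicts Lemma \ref{blowup}, so $\Lambda^{2}\mathcal{T}_X$ is not nef. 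The argument below, however, is uniform and in particular covers also the case in which $f$ has a $2$-dimensional fibre, where Theorem \ref{ando} is not available.

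First I would determine the length of $R$. Applying the Ionescu--Wi\'sniewski inequality $\dim F+\dim E\geqslant \dim X+\ell(R)-1$ to a general, hence $1$-dimensional, fibre $F$ of $f|_E$ gives $1+3\geqslant 4+\ell(R)-1$, so $\ell(R)\leqslant 1$ and therefore $\ell(R)=1$. Next I would take $C$ to be a general member of a minimal family of rational curves whose classes lie in $R$; then $C\cong\mathbb{P}^{1}$ and $-K_X\cdot C=\ell(R)=1$, so that the normal bundle splits as $N_{C/X}\cong\bigoplus_{i=1}^{3}\mathcal{O}_{\mathbb{P}^{1}}(a_i)$ with $a_1+a_2+a_3=-K_X\cdot C-2=-1$.

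The elementary point is that such an $N_{C/X}$ can never have $\Lambda^{2}N_{C/X}$ nef. Ordering $a_1\geqslant a_2\geqslant a_3$, if $a_2+a_3\geqslant 0$ then $a_1=-1-(a_2+a_3)\leqslant -1$, whence $a_2,a_3\leqslant a_1\leqslant -1$ and so $a_2+a_3\leqslant -2$, a contradiction; therefore $a_2+a_3\leqslant -1$, and $\Lambda^{2}N_{C/X}=\bigoplus_{i<j}\mathcal{O}_{\mathbb{P}^{1}}(a_i+a_j)$ contains the direct summand $\mathcal{O}_{\mathbb{P}^{1}}(a_2+a_3)$ of negative degree, hence is not nef. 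Finally, the exact sequence $0\to\mathcal{T}_C\to\mathcal{T}_X|_C\to N_{C/X}\to 0$ produces a surjection $\Lambda^{2}\mathcal{T}_X|_C\to\Lambda^{2}N_{C/X}\to 0$; since restrictions and quotient bundles of a nef vector bundle are nef, the non-nefness of $\Lambda^{2}N_{C/X}$ forces $\Lambda^{2}\mathcal{T}_X$ to be non-nef, which is the assertion.

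I expect the only point needing care to be the existence step: getting $\ell(R)=1$ (a naive dimension count only yields $\ell(R)\leqslant 2$, so the sharper Ionescu--Wi\'sniewski estimate is genuinely required) and knowing that a general member $C$ of a minimal family is a smooth rational curve, so that the splitting $N_{C/X}\cong\bigoplus\mathcal{O}_{\mathbb{P}^{1}}(a_i)$ is available. To sidestep the last regularity issue one may instead pick any rational curve of anticanonical degree $1$ in $R$, pass to its normalisation $\nu\colon\mathbb{P}^{1}\to X$, replace $N_{C/X}$ by the locally free sheaf $\mathrm{coker}(d\nu)$, and run the identical computation after pulling $\Lambda^{2}\mathcal{T}_X$ back along $\nu$.
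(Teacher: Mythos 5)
Your argument is correct and is essentially the paper's: both proofs reduce the statement to producing a rational curve $C$ with $-K_X\cdot C=1$ and then invoking the fact that nefness of $\Lambda^2\mathcal{T}_X$ forces $-K_X\cdot C\geqslant 2$ for every rational curve (the paper's Lemma \ref{2}, which you in effect reprove through the splitting of $N_{C/X}$ rather than of $\nu^*\mathcal{T}_X$). The only real difference is the source of the degree-one curve --- the paper quotes Ando's structure result (Theorem \ref{3,2}), by which a one-dimensional fibre of a birational Fano--Mori contraction is a smooth rational curve that is a line for $-K_X$, whereas you derive $\ell(R)=1$ from the Ionescu--Wi\'sniewski inequality; both routes work, and Ando's statement also settles the smoothness/normal-bundle regularity point you flag at the end.
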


To show this, we use the following theorem due to Ando.

\begin{theorem}[\cite{ando}, Lemma1.5 and Lemma 2.2]\label{3,2}
Let $\varphi:X\rightarrow Z$ be a Fano-Mori contraction of a smooth variety. Suppose that a fiber $F$ of $\varphi$ contains an irreducible component of dimension 1; then
$F$ is of pure dimension 1 and all component of F are smooth rational curves.
If $\varphi$ is birational then $F$ is irreducible and it is a line with respect to $-K_X$.  
\end{theorem}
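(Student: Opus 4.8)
The plan is to run everything through Mori's bend--and--break together with the deformation theory of rational curves. Since $\varphi$ is a Fano--Mori contraction, $-K_X$ is $\varphi$-ample, so through every point $x$ of the positive-dimensional fiber $F$ there passes a rational curve $C\subset F$ whose numerical class lies in the contracted ray $R$ and with $0<-K_X\cdot C\le\dim_xF+1$ (Ionescu's refinement of bend--and--break along fibers). Write $\ell=\ell(R)$ for the length of $R$. I will also use two standard inputs: first, the family of deformations of a rational curve $C$ through a fixed point has dimension at least $-K_X\cdot C-2$, and every such deformation again lies in $F$ (it is numerically trivial over $Z$ and passes through a point of $F$); second, the Ionescu--Wi\'sniewski fiber inequality $\dim E+\dim F\ge\dim X+\ell-1$, where $E$ denotes the locus swept out by curves of $R$ (so $E=X$ in the fiber-type case, and $E\subsetneq X$ is a proper closed subset in the birational case).

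Now suppose $F$ has an irreducible component $C_0$ of dimension $1$. Because a Fano--Mori contraction has connected fibers, $F$ is connected, so it has no isolated point-components, and the task reduces to excluding components of dimension $\ge2$. For a general point $x\in C_0$ the curve produced by bend--and--break is contained in $F$, is one-dimensional, and passes through the general point of the one-dimensional component $C_0$, hence equals $C_0$ set-theoretically; thus $C_0$ is rational, its deformations through $x$ are trapped in the one-dimensional germ of $F$ at $x$ and cannot move, which forces $-K_X\cdot C_0\le2$ and $\ell\le2$. The crux is to rule out an irreducible component $F_1\subseteq F$ with $d:=\dim F_1\ge2$: such an $F_1$ is swept out by a covering family of rational curves of $R$, it meets $C_0$ (or is linked to it through a chain of components) by connectedness of $F$, and evaluating the deformation-dimension estimate at a point of $C_0\cap F_1$ against the fiber inequality yields a numerical contradiction. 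Granting pure dimension one, rationality of every component of $F$ follows exactly as for $C_0$, and smoothness of the reduced components is a local statement: an extra branch at a singular point $p$ would produce an extra parameter of contracted rational curves through $p$, incompatible with $F$ being purely one-dimensional.

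Finally, if $\varphi$ is birational then $\dim E\le\dim X-1$, so the fiber inequality with $\dim F=1$ forces $\ell\le1$, hence $\ell=1$; plugged back in, it also gives $\dim E\ge\dim X-1$, so $E$ is a prime divisor contracted onto $\varphi(E)$ of dimension $\dim X-2$, and the minimal rational curve in $F$ has $-K_X$-degree one. That $F$ is in fact irreducible, reduced, and of $-K_X$-degree one --- i.e. a line --- then follows by showing $\varphi|_E$ is equidimensional, so that Theorem \ref{ando} identifies $\varphi$ with the blow-up of a smooth variety along a smooth codimension-two centre, whose fibers are $\cong\mathbb{P}^1$ with $-K_X$-degree one (alternatively, by a direct argument on the connected one-dimensional fiber); for fiber-type $\varphi$ no such rigidity holds (degenerate conics occur), which is why the last assertion is confined to the birational case. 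I expect the purity step --- excluding components of dimension $\ge2$ once a one-dimensional component is present --- to be the genuine obstacle: there the deformation-theoretic and fiber-dimension estimates have to be combined sharply rather than merely invoked, whereas rationality, smoothness, and the birational ``line'' statement are comparatively formal afterwards.
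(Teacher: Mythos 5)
Note first that the paper does not prove this statement: it is quoted from Ando's paper (Lemma 1.5 and Lemma 2.2 of \cite{ando}), so the only proof to measure yours against is Ando's original one. Against that standard, your proposal has a genuine gap exactly where you suspect one, namely the purity step, and the gap is not just a matter of "combining the estimates sharply": the two inequalities you propose to play off against each other simply do not produce a contradiction. Applied to the one-dimensional component $C_0$, the fiber-locus inequality $\dim E+\dim F'\geqslant\dim X+\ell-1$ gives only $\ell\leqslant2$ (fiber type) or $\ell=1$ and $\dim E=\dim X-1$ (birational); applied to a hypothetical component $F_1$ with $\dim F_1\geqslant2$ it gives the strictly weaker $\dim E+2\geqslant\dim X+\ell-1$, which holds in every one of these cases; and a minimal curve through a point of $C_0\cap F_1$ is only forced to sweep out a locus of dimension $\geqslant -K_X\cdot C-1$, which is allowed to be as large as $\dim F_1$. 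So no numerical contradiction arises, and an additional idea is genuinely required. The same criticism applies to smoothness and rationality of the components: the remark that "an extra branch gives an extra parameter of curves" does not exclude, say, a nodal rational component. The standard mechanism, which your sketch never touches, is cohomological: $-K_X$ is $\varphi$-ample, so relative Kawamata--Viehweg vanishing gives $R^1\varphi_*\mathcal{O}_X=0$; the theorem on formal functions then forces $H^1(F,\mathcal{O}_F)=0$ for the fiber with its infinitesimal neighbourhoods, and a connected projective curve with $H^1(\mathcal{O})=0$ is a tree of smooth rational curves. That is where smoothness and rationality actually come from.

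There is also a problem in the birational case. You propose to get irreducibility and $-K_X\cdot F=1$ by showing that $\varphi|_E$ is equidimensional and then invoking Theorem \ref{ando}; but that theorem assumes that \emph{every} fiber of the contraction has dimension at most one, a global hypothesis you have not established --- the statement being proved concerns a single fiber $F$ that happens to contain a one-dimensional component, and a priori other fibers could be larger, so this step is close to circular. The correct route is a direct parameter count on the given fiber: each component $C_i$ moves in an unsplit family whose locus is confined to $E$ with $\dim E\leqslant\dim X-1$, which pins down $-K_X\cdot C_i=1$, and irreducibility then follows because a degree-one curve cannot degenerate into a reducible cycle of the ray. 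In summary, the bend-and-break framework is the right neighbourhood, but the two central assertions (purity of dimension and smooth rationality of the components) are not proved by the tools you list, and the concluding birational step rests on a hypothesis you have not verified.
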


\begin{proof}
By Theorem \ref{3,2} we have a curve C such that $-K_X.C=1$.
If $\Lambda ^2 \mathcal{T}_X$ is nef, this contradicts to Lemma \ref{2}.
\end{proof}

\begin{lemma}\label{2}
Let X be a smooth n-dimensional projective manifold  with nef vector bundle $\displaystyle \Lambda ^2 \mathcal{T}_X$. 
For any rational curve $C$ in $X$, we have $-K_X.C\geqslant 2$. 
Furthermore, If $\displaystyle \mathcal{T}_X$ is not nef, then $-K_X.C\geqslant n-1$.
\end{lemma}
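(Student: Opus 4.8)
The plan is to pull everything back to the normalization of $C$ and read the bound off the splitting type of the tangent bundle there. Let $\nu\colon\mathbb{P}^1\to X$ be the composite of the normalization $\mathbb{P}^1\to C$ with the inclusion $C\hookrightarrow X$, and by Grothendieck's theorem write $\nu^{*}\mathcal{T}_X\cong\bigoplus_{i=1}^{n}\mathcal{O}_{\mathbb{P}^1}(a_i)$ with $a_1\geqslant a_2\geqslant\cdots\geqslant a_n$. Two inputs constrain the $a_i$. Since $\nu$ is non-constant its differential is a nonzero sheaf map $\mathcal{O}_{\mathbb{P}^1}(2)=\mathcal{T}_{\mathbb{P}^1}\to\nu^{*}\mathcal{T}_X$, so some component $\mathcal{O}_{\mathbb{P}^1}(2)\to\mathcal{O}_{\mathbb{P}^1}(a_i)$ is nonzero, which forces $a_1\geqslant 2$. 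And since $\Lambda^2\mathcal{T}_X$ is nef, so is its pullback $\nu^{*}\Lambda^2\mathcal{T}_X\cong\Lambda^2\nu^{*}\mathcal{T}_X\cong\bigoplus_{i<j}\mathcal{O}_{\mathbb{P}^1}(a_i+a_j)$; a direct sum of line bundles on $\mathbb{P}^1$ is nef precisely when every summand has non-negative degree, so $a_i+a_j\geqslant 0$ for all $i<j$.

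The remaining steps are elementary bookkeeping with the $a_i$. First I would note that $a_2,\dots,a_{n-1}\geqslant 0$: if $a_j<0$ for some $j\leqslant n-1$, then $a_j\geqslant a_{j+1}$ are both negative while $j<j+1\leqslant n$, contradicting $a_j+a_{j+1}\geqslant 0$. Since $-K_X\cdot C=\deg\nu^{*}(-K_X)=\deg\det(\nu^{*}\mathcal{T}_X)=\sum_{i=1}^{n}a_i$, I would group the sum as $a_1+\big(a_2+\cdots+a_{n-2}\big)+(a_{n-1}+a_n)$ and use $a_1\geqslant 2$, non-negativity of the middle block, and $a_{n-1}+a_n\geqslant 0$ to conclude $-K_X\cdot C\geqslant 2$ (for $n=3$ the middle block is empty and one just uses $a_1+(a_2+a_3)\geqslant 2$).

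For the second clause the hypothesis is used in the form that $\mathcal{T}_X$ is not nef when restricted to $C$, equivalently $\nu^{*}\mathcal{T}_X$ has a summand of negative degree, equivalently $a_n<0$. Then $a_{n-1}+a_n\geqslant 0$ forces $a_{n-1}\geqslant 1$, hence by monotonicity $a_1\geqslant a_2\geqslant\cdots\geqslant a_{n-1}\geqslant 1$ together with $a_1\geqslant 2$. Re-running the same grouping $a_1+\big(a_2+\cdots+a_{n-2}\big)+(a_{n-1}+a_n)$, the middle block now contributes at least $n-3$, and one gets $-K_X\cdot C\geqslant 2+(n-3)+0=n-1$.

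I would not expect any genuine difficulty: once the splitting type is in hand the argument reduces to the two standard facts that pullbacks of nef bundles are nef and that nefness on $\mathbb{P}^1$ is non-negativity of all the degrees, plus the sorting inequalities. The one point that needs care, and the only real subtlety, is the formulation of the second statement — it has to be applied in the form ``$\mathcal{T}_X|_C$ is not nef'' ($a_n<0$), which is how the lemma enters the later classification once one has produced a rational curve along which the tangent bundle fails to be semipositive.
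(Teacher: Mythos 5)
Your proposal is correct and follows essentially the same route as the paper: Grothendieck splitting of $\nu^{*}\mathcal{T}_X$, the bound $a_1\geqslant 2$, the inequality $a_{n-1}+a_n\geqslant 0$ from nefness of $\Lambda^2\mathcal{T}_X$, and the same grouping of the sum $\sum a_i$ in both cases. Your remark that the second clause must be applied in the form ``$\mathcal{T}_X$ is not nef on $C$'' (i.e.\ $a_n<0$) is exactly the reading the paper's proof implicitly uses.
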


\begin{proof}
Let $\nu: \mathbb{P}^1\rightarrow C$ be the normalization. We set $\nu^*\mathcal{T}_X\cong\mathcal{O}_{\mathbb{P}^1}(a_1)\oplus\mathcal{O}_{\mathbb{P}^1}(a_2)\oplus\cdots\oplus\mathcal{O}_{\mathbb{P}^1}(a_n)$ where $a_1\geqslant a_2\geqslant\cdots\geqslant a_n$. We have $a_1\geqslant 2$ and $-K_X.C=a_1+a_2+\cdots+a_n$.
Since $\displaystyle \Lambda ^2 \mathcal{T}_X$ is nef, we obtain $a_{n-1}+a_n\geqslant 0$. In particular, we have $a_{n-1}\geqslant 0$. 
Hence we get $-K_X.C=a_1+(a_2+a_3+\cdots+a_{n-2})+(a_{n-1}+a_n)\geqslant 2$.
If $\displaystyle \mathcal{T}_X$ is not nef, then we know that $a_n<0$ and $a_{n-1}\geqslant-a_n>0$.
Therefore, we obtain $-K_X.C\geqslant n-1$.
\end{proof}

\subsection{(3,1)-type}
In this subsection we consider the case where $X$ has a $(3,1)$-type extremal contraction.

\begin{theorem}\label{3,1type}
Let X be a Fano fourfold with $(3,1)$-type contraction. Then $\displaystyle \Lambda ^2 \mathcal{T}_X$ is not nef.
\end{theorem}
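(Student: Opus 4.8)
The plan is to follow the pattern of Lemma~\ref{blowup} and Theorem~\ref{2,0case}: restrict $\Lambda^2\mathcal{T}_X$ to a general fibre of the contraction and exhibit a quotient line bundle that is not nef. Write $f\colon X\to Y$ for the $(3,1)$-type contraction, $E$ for its exceptional divisor and $B=f(E)$ for the image curve. First I would note that $\varphi:=f|_E\colon E\to B$ is equidimensional with every fibre of dimension $2$: since $E$ is irreducible of dimension $3$ and $\dim B=1$, a fibre of dimension $\geqslant 3$ would be all of $E$ and would force $B$ to be a point. For a general $b\in B$ set $F:=\varphi^{-1}(b)$; one expects $F$ to be a smooth surface lying in the smooth locus of $E$, and since $F=\varphi^{*}(b)$ is the Cartier fibre over a point of the smooth curve $B$, one gets $N_{F/E}\cong\mathcal{O}_F$.

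Granting the smoothness, the rest is a short calculation. The exact sequence $0\to N_{F/E}\to N_{F/X}\to N_{E/X}|_F\to 0$ together with $N_{F/E}\cong\mathcal{O}_F$ gives $\Lambda^2 N_{F/X}=\det N_{F/X}\cong N_{E/X}|_F\cong\mathcal{O}_X(E)|_F$. Every curve $C$ in the fibre $F$ lies in the contracted extremal ray, so $E\cdot C<0$; hence $\mathcal{O}_X(E)|_F$, and therefore $\det N_{F/X}=\Lambda^2 N_{F/X}$, is not nef. Now the conormal sequence $0\to\mathcal{T}_F\to\mathcal{T}_X|_F\to N_{F/X}\to 0$ produces a surjective homomorphism of vector bundles $\Lambda^2\mathcal{T}_X|_F\to\Lambda^2 N_{F/X}\to 0$, so $\Lambda^2\mathcal{T}_X|_F$, being mapped onto a non-nef line bundle, is not nef; hence neither is $\Lambda^2\mathcal{T}_X$.

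I expect the main obstacle to be justifying that the general fibre $F$ (and $E$ along it) is smooth, which is precisely what turns the two normal bundle sequences into exact sequences of locally free sheaves. I would handle this by invoking the structure theory of $(3,1)$-type divisorial contractions of smooth fourfolds, whose general fibre is a smooth del Pezzo surface (so $N_{E/X}|_F$ is an anti-ample line bundle); note that the basic instance — where $f$ is the blow-up of a smooth curve in a smooth fourfold — is in any case covered by the $\dim Z\geqslant 1$ case of Lemma~\ref{blowup}. Alternatively, one can argue via the length $l(R)$ of the extremal ray: the Ionescu--Wi\'sniewski inequality $\dim E+\dim F\geqslant\dim X+l(R)-1$ gives $l(R)\leqslant 2$. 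If $l(R)=1$ there is a rational curve with $-K_X\cdot C=1$, contradicting Lemma~\ref{2}. If $l(R)=2$, then for a minimal rational curve $C$ generating $R$ with normalization $\nu$, the facts $a_1\geqslant 2$, $\sum a_i=-K_X\cdot C=2$ and $a_3+a_4\geqslant 0$ (as in the proof of Lemma~\ref{2}) force $\nu^*\mathcal{T}_X\cong\mathcal{O}_{\mathbb{P}^1}(2)\oplus\mathcal{O}_{\mathbb{P}^1}^{\oplus 3}$, whence $N_{C/X}\cong\mathcal{O}_{\mathbb{P}^1}^{\oplus 3}$ is globally generated; since such curves sweep out $E$, a general one can be taken inside the smooth locus of $E$, and then the surjection $N_{C/X}\to N_{E/X}|_C$ onto the negative line bundle $\mathcal{O}_X(E)|_C$ is absurd. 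In either case $\Lambda^2\mathcal{T}_X$ is not nef.
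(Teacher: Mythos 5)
Your write-up bundles two arguments of different status, and neither coincides with the paper's proof. The paper simply quotes Takagi's structure theorem for $(3,1)$-type contractions (Theorem \ref{takagi}): in three of the four possible general fibres $(F,-K_X|_F)$ there is a rational curve with $-K_X\cdot C=1$, which is excluded by Lemma \ref{2}, and in the remaining case $(\mathbb{P}^2,\mathcal{O}_{\mathbb{P}^2}(2))$ the contraction is the blow-up of a smooth fourfold along a smooth curve, which is excluded by Lemma \ref{blowup}. Your first argument (the surjection $\Lambda^2\mathcal{T}_X|_F\to\det N_{F/X}\cong\mathcal{O}_X(E)|_F$) has exactly the gap you flag, and the repair you propose does not close it: Theorem \ref{takagi} allows the general fibre to be $\mathbb{F}_{2,0}$, the quadric \emph{cone} in $\mathbb{P}^3$, which is a singular del Pezzo surface; so "the general fibre is a smooth del Pezzo surface" is false, and the two conormal sequences need not be exact sequences of locally free sheaves on all of $F$. (That case is of course killed by its ruling lines, which have $-K_X\cdot\ell=1$ --- but that is precisely the paper's route, so the first argument does not stand on its own.)

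Your alternative argument via the length of the ray is the genuinely interesting part: it is essentially correct, avoids Takagi's classification altogether, and is more in the spirit of Lemmas \ref{2} and \ref{restriction}. The Ionescu--Wi\'sniewski inequality applied to a general fibre gives $l(R)\leqslant 2$, Lemma \ref{2} gives $l(R)\geqslant 2$, and for a minimal curve the splitting type is forced to be $\mathcal{O}_{\mathbb{P}^1}(2)\oplus\mathcal{O}_{\mathbb{P}^1}^{\oplus 3}$ exactly as in Lemma \ref{restriction}. The one step you should not rely on is "such curves sweep out $E$, so a general one lies in the smooth locus of $E$": nothing you have said guarantees that the minimal curves cover $E$ or avoid its singular locus. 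You do not need this. The splitting type $\mathcal{O}(2)\oplus\mathcal{O}^{\oplus 3}$ makes $C$ a free rational curve ($\nu^*\mathcal{T}_X$ globally generated with vanishing $H^1$), so its deformations dominate $X$; on the other hand every deformation has the same numerical class, hence stays in the ray $R$, is contracted by the birational map $f$, and is therefore contained in the divisor $E\subsetneq X$. This contradiction eliminates $l(R)=2$ with no reference to the geometry of $E$ at all, and in this form your length argument gives a clean, self-contained proof of the theorem that is shorter than the paper's.
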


To show this, we use the following theorem due to Takagi.

\begin{theorem}[\cite{takagi}, Main Theorem and Theorem 1.1]\label{takagi}
Let X be a smooth fourfold and let $f:X\rightarrow Y$ be a contraction of an extremal ray of $(3,1)$-type.
Let E be the exceptional divisor of f and $C:=f(E)$.
Then
\begin{enumerate}
\item C is a smooth curve $;$
\item $f |_E : E \rightarrow C$ is a $\mathbb{P}^2$-bundle or a quadric bundle over C without multiple fibers $;$
\item f is the blowing-up of Y along C $;$
\item Let F be a general fiber of $f|_E:E\rightarrow C$. Then $(F,-K_X|F)\cong(\mathbb{P}^2,\mathcal{O}_{\mathbb{P}^2}(1)),\;(\mathbb{P}^2,\mathcal{O}_{\mathbb{P}^2}(2))$, 
$(\mathbb{P}^1\times\mathbb{P}^1,\mathcal{O}_{\mathbb{P}^1}(1)\boxtimes\mathcal{O}_{\mathbb{P}^1}(1))$ or $(\mathbb{F}_{2,0},\mathcal{O}_{\mathbb{P}^3}(1)|_{\mathbb{F}_{2,0}})$.
\end{enumerate}
\end{theorem}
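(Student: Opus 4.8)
The plan is to reconstruct $f$ from the geometry of its fibers, following the Mori-theoretic strategy for divisorial extremal contractions. First I would fix the numerical invariants. Since $f$ contracts a $K_X$-negative extremal ray $R$, write $\ell=\ell(R)=\min\{-K_X\cdot\Gamma\}$ for the length of $R$, the minimum taken over rational curves $\Gamma$ in $R$. Because the contraction is of $(3,1)$-type, $E$ is a divisor and the general fiber $F=f^{-1}(c)$ over $c\in C$ has dimension $2$. The Ionescu--Wi\'sniewski inequality $\dim E+\dim F\ge\dim X+\ell-1$ then yields $5\ge 3+\ell$, so $\ell\le 2$. Since $-K_X$ is $f$-ample, $L:=-K_X|_F$ is an ample polarization on $F$, and $F$ is covered by the minimal rational curves of $R$, which have $L$-degree $\ell\in\{1,2\}$.

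Next I would classify the polarized general fiber $(F,L)$, which is the content of item (4). By generic smoothness $F$ is a smooth surface swept out by rational curves of low anticanonical degree, hence a (weak) del Pezzo surface; combining adjunction on $F$ with the constraint that the minimal $R$-curves have $L$-degree at most $2$ bounds $L^2$ and forces $(F,L)$ to be one of $(\mathbb{P}^2,\mathcal{O}(1))$, $(\mathbb{P}^2,\mathcal{O}(2))$, $(\mathbb{P}^1\times\mathbb{P}^1,\mathcal{O}(1,1))$, or the degenerate quadric $\mathbb{F}_{2,0}$ occurring along special fibers. The input here is the classification of minimal-degree polarized rational surfaces together with a deformation argument for the normal bundle of a minimal curve inside $X$.

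I would then globalize the fiber data along $C$. A key simplification is that $f|_E$ is automatically equidimensional: if some fiber had dimension $3$ it would equal the irreducible threefold $E$, contradicting $\dim f(E)=\dim C=1$, so every fiber of $f|_E$ is a surface. An equidimensional fibration of a threefold over a curve whose fibers are planes or quadrics is a $\mathbb{P}^2$-bundle (when the general fiber is $\mathbb{P}^2$) or a quadric bundle (when it is a quadric surface, with $\mathbb{F}_{2,0}$-type degenerations allowed), giving item (2); this is the two-dimensional-fiber analogue of Ando's Theorems \ref{ando} and \ref{ando2}. Smoothness of $C$ and the absence of multiple fibers follow by analyzing the relative anticanonical (or relative $\mathcal{O}(1)$) embedding of $E$ over $C$ and checking that the scheme-theoretic fibers are reduced, so that $C\cong f(E)$ inherits smoothness; this yields item (1).

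Finally I would identify $f$ with the blow-up of $Y$ along $C$ (item (3)). Kawamata--Viehweg vanishing applied to the $f$-ample $-K_X$ gives $f_*\mathcal{O}_X=\mathcal{O}_Y$ and $R^if_*\mathcal{O}_X=0$ for $i>0$, so $Y$ is normal with rational singularities; identifying the ideal sheaf of $C$ with $f_*\mathcal{O}_X(-E)$ then presents $X$ as the blow-up $\mathrm{Bl}_CY$. The main obstacle is exactly this last step away from the clean case $(\mathbb{P}^2,\mathcal{O}(2))$, which is the blow-up of a smooth $Y$ along a smooth curve: in the quadric-bundle and $\mathcal{O}(1)$-cases the numerics show $Y$ cannot be smooth along $C$, the divisor $E\to C$ is not the projectivized normal bundle of a smooth center, and one must carry out Takagi's local analytic study --- describing the transverse singularity of $Y$ and verifying that blowing up the (possibly singular or non-reduced) center recovers $X$. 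Controlling these special fibers, both their scheme structure and the germ of $Y$ along them, is where the genuine difficulty lies; the $(\mathbb{P}^2,\mathcal{O}(2))$ case by contrast reduces directly to a standard smooth blow-up.
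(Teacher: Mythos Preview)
The paper does not prove this theorem at all: it is quoted verbatim from Takagi's paper \cite{takagi} (Main Theorem and Theorem~1.1) and used as a black box in the proof of Theorem~\ref{3,1type}. There is therefore no proof in the paper to compare your proposal against.

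Your sketch is a reasonable outline of the strategy one finds in Takagi's original paper --- bounding the length via the Ionescu--Wi\'sniewski inequality, classifying the polarized general fiber, globalizing over $C$, and then identifying $f$ with a blow-up --- and you correctly flag that the hard part is the local analysis of $Y$ along $C$ in the non-$(\mathbb{P}^2,\mathcal{O}(2))$ cases. But none of this belongs here: for the purposes of the present paper the statement is an input, not something to be re-proved, and the only thing the author actually uses downstream is item~(4), to extract either a rational curve with $-K_X\cdot C=1$ or the smooth blow-up case handled by Lemma~\ref{blowup}.
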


\begin{proof}[Proof of Theorem \ref{3,1type}]
Let $f:X\rightarrow Y$ be the $(3,1)$-type contraction, E the exceptional locus of $f$ and $F$ a general fiber of $f|_E$.
Then by Theorem \ref{takagi} we have $(F,-K_X|F)\cong(\mathbb{P}^2,\mathcal{O}_{\mathbb{P}^2}(1)),\;(\mathbb{P}^2,\mathcal{O}_{\mathbb{P}^2}(2))$, 
$(\mathbb{P}^1\times\mathbb{P}^1,\mathcal{O}_{\mathbb{P}^1}(1)\boxtimes\mathcal{O}_{\mathbb{P}^1}(1))$ or $(\mathbb{F}_{2,0},\mathcal{O}_{\mathbb{P}^3}(1)|_{\mathbb{F}_{2,0}})$.
Except for the second case, we have a rational curve $C$ in $F$ such that $-K_X.C=1$. In these cases $\displaystyle \Lambda ^2 \mathcal{T}_X$ is not nef by Lemma \ref{2}. 
If $(F,-K_X|F)\cong(\mathbb{P}^2,\mathcal{O}_{\mathbb{P}^2}(2))$ then X is the blowing-up of smooth manifold Y along smooth curve C. 
Hence $\Lambda ^2 \mathcal{T}_X$ is not nef by Lemma \ref{blowup}. 
\end{proof}

\subsection{(3,0)-type}
In this subsection we consider the case where $X$ has a $(3,0)$-type extremal contraction.

\begin{theorem}\label{3,0type}
Let X be a Fano fourfold with $(3,0)$-type contraction. If $\displaystyle \Lambda ^2 \mathcal{T}_X$ is nef, then X is a blowing-up of the projective space $\mathbb{P}^n$ at a point.
\end{theorem}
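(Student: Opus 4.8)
The plan is to identify the exceptional divisor $E$ of the given $(3,0)$-type contraction $f\colon X\to Y$ with $\mathbb{P}^{3}$ carrying normal bundle $\mathcal{O}_{\mathbb{P}^{3}}(-1)$, and then to conclude by Nakano's contractibility criterion together with Lemma \ref{blowup}. First I would record the standard structure of a divisorial contraction to a point: the divisor $E$ is irreducible and Cartier in the smooth fourfold $X$ (so $E$ is Gorenstein), and $\mathcal{L}:=\mathcal{O}_{E}(-E)$ is $f$-ample, hence ample since $f(E)$ is a point. Writing $K_{X}=f^{*}K_{Y}+aE$ (with $K_{Y}$ $\mathbb{Q}$-Cartier and the discrepancy $a>0$, as one checks by intersecting with a contracted curve) and restricting to $E$, where $f^{*}(-K_{Y})|_{E}$ is trivial, one gets $-K_{X}|_{E}=a\mathcal{L}$ and, by adjunction, $-K_{E}=(a+1)\mathcal{L}$. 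Thus $E$ is a Gorenstein Fano threefold polarized by the ample line bundle $\mathcal{L}$, and every rational curve $C\subset E$ satisfies $-K_{X}\cdot C=a\,(\mathcal{L}\cdot C)$.

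Next I would bound from both sides the length $l$ of the extremal ray $R$ contracted by $f$. For the lower bound: $\mathcal{T}_{X}$ is not nef, since for any rational curve $C\subset E$ with normalization $\nu\colon\mathbb{P}^{1}\to C$ the surjection $\mathcal{T}_{X}|_{E}\twoheadrightarrow N_{E/X}$ pulls back to a surjection $\nu^{*}\mathcal{T}_{X}\twoheadrightarrow\mathcal{O}_{\mathbb{P}^{1}}(E\cdot C)$ with $E\cdot C<0$, so the smallest summand of $\nu^{*}\mathcal{T}_{X}$ has negative degree. Hence the \emph{furthermore} part of Lemma \ref{2} gives $-K_{X}\cdot C\geqslant n-1=3$ for every rational curve in $X$, so $l\geqslant 3$. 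For the upper bound, the Ionescu--Wi\'sniewski inequality, applied to the non-trivial fibre of $f$ (which is $f^{-1}(\mathrm{pt})=E$, of dimension $3$), reads $\dim E+\dim E\geqslant\dim X+l-1$, that is $6\geqslant 3+l$, so $l\leqslant 3$. Therefore $l=3$.

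Now I would pin down $a$. Let $\mu\geqslant 1$ be the minimum of $\mathcal{L}\cdot C$ over all rational curves $C\subset E$, so that $l=a\mu=3$. Since $E$ is a Gorenstein Fano threefold, bend-and-break supplies a rational curve $C_{1}\subset E$ with $-K_{E}\cdot C_{1}\leqslant\dim E+1=4$; combined with $-K_{E}=(a+1)\mathcal{L}$ and $\mathcal{L}\cdot C_{1}\geqslant\mu$ this gives $(a+1)\mu\leqslant 4$, i.e.\ $l+\mu\leqslant 4$, forcing $\mu=1$ and $a=3$. Hence $-K_{E}=4\mathcal{L}$ with $\mathcal{L}$ an ample Cartier divisor on the Gorenstein Fano threefold $E$, so by the Kobayashi--Ochiai theorem in Fujita's Gorenstein form $(E,\mathcal{L})\cong(\mathbb{P}^{3},\mathcal{O}_{\mathbb{P}^{3}}(1))$; in particular $E\cong\mathbb{P}^{3}$ and $N_{E/X}\cong\mathcal{O}_{\mathbb{P}^{3}}(-1)$. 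By Nakano's criterion the contraction of $E$ --- which is $f$, by uniqueness of extremal contractions --- is the blow-up of a smooth fourfold $Y$ at a point; so $X$ is a blow-up of a smooth manifold at a point with $\Lambda^{2}\mathcal{T}_{X}$ nef, and Lemma \ref{blowup} forces $Y\cong\mathbb{P}^{4}$, as claimed.

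The step I expect to be the main obstacle is making the threefold geometry of this argument legitimate when $E$ is singular, or even non-normal, which a priori can happen for the exceptional locus of a $(3,0)$-type contraction. One has to be sure that $\mathcal{L}=\mathcal{O}_{E}(-E)$ and $-K_{E}$ are honest Cartier classes --- true since $E$ is a Cartier divisor in the smooth $X$ --- and that $E$ is mild enough for both bend-and-break (with the sharp bound $\dim E+1$) and Fujita's Gorenstein Kobayashi--Ochiai theorem to apply. If one would rather avoid this, the same conclusion can be reached by invoking a classification of $(3,0)$-type extremal contractions of smooth fourfolds and eliminating every case other than $(E,N_{E/X})=(\mathbb{P}^{3},\mathcal{O}_{\mathbb{P}^{3}}(-1))$ through the quotient $\Lambda^{2}\mathcal{T}_{X}|_{E}\twoheadrightarrow\mathcal{T}_{E}\otimes N_{E/X}$ restricted to a suitable rational curve, in the spirit of the proof of Theorem \ref{2,0case}.
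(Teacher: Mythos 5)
Your argument is correct in outline and reaches the same intermediate goal as the paper --- namely $E\cong\mathbb{P}^{3}$ with $N_{E/X}\cong\mathcal{O}_{\mathbb{P}^{3}}(-1)$, after which Lemma~\ref{blowup} finishes --- but it gets there with genuinely different tools at the two key steps. For the numerics, the paper uses only the first half of Lemma~\ref{2} ($-K_X\cdot C\geqslant 2$) together with $-E\cdot C\geqslant 1$ to get $-K_E\cdot C\geqslant 3$, and then excludes $-K_X\cdot C=2$ via Lemma~\ref{p-bundle} (such an extremal curve would make the contraction a $\mathbb{P}^1$-bundle, contradicting birationality); you instead read $-K_X\cdot C\geqslant 3$ directly off the ``furthermore'' clause of Lemma~\ref{2}, since every rational curve in $E$ admits the non-nef quotient $\mathcal{O}_{\mathbb{P}^1}(E\cdot C)$ of $\nu^*\mathcal{T}_X$, and you supply the matching upper bound $l\leqslant 3$ by the Ionescu--Wi\'sniewski inequality. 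That is cleaner and shows the detour through Lemma~\ref{p-bundle} is not needed here; just note that the ``furthermore'' clause must be read per-curve (as its proof shows), so your phrase ``for every rational curve in $X$'' should be ``for every rational curve in $E$'' --- the global reading is false (the fibres of the second contraction of the blow-up of $\mathbb{P}^4$ at a point have anticanonical degree $2$), but only curves in $E$ are relevant for $l\geqslant 3$. For the identification of $E$, the paper combines Ando's Theorem~\ref{ando1} with the Beltrametti--Fujita list (Theorem~\ref{fujita}), which explicitly allows singular quadrics and non-normal del Pezzo varieties and is therefore immune to the normality issue; your appeal to a Gorenstein Kobayashi--Ochiai theorem for the a priori only lci, possibly non-normal, threefold $E$ is exactly the point you flag as delicate, and Theorem~\ref{fujita} is the intended way around it: given $-K_E=4\mathcal{L}$ with $\mathcal{L}$ ample Cartier, none of the quadric or del Pezzo entries of that list can occur. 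Both proofs use Koll\'ar's bound (Theorem~\ref{kollar}) on the lci Fano divisor $E$ in the same way, and your remaining steps (discrepancy computation, $l=a\mu$, Nakano contractibility) are sound.
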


To show this, we use following theorems.

\begin{theorem}[\cite{ando}, Theorem 2.1]\label{ando1}
Let $X$ be a smooth projective manifold of dimension n. Let $f:X\rightarrow Y$ be an extremal contraction and $E$ the exceptional locus of $f$. Assume that $\mathrm{dim}E=n-1$.
Let F be a general fiber of $f_E:E\rightarrow f(E)$. Then there exists a Cartier divisor $L$ on $X$ such that 
\begin{enumerate}
\item Im$($Pic$X\rightarrow$Pic$F)=\mathbb{Z}[L|_F]$ and $L|_F$ is ample on $F$ $;$
\item $\mathcal{O}_F(-K_X)\cong\mathcal{O}_F(pL)$ and $\mathcal{O}_F(-E)\cong\mathcal{O}_F(qL)$ for some $p,q\in\mathbb{N}.$ 
\end{enumerate}
\end{theorem}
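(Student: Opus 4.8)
The plan is to exploit the defining feature of an extremal contraction: it contracts a single ray $R=\mathbb{R}_{\geqslant 0}[l]$, so the relative Picard number is one and every curve lying in a contracted fiber is numerically proportional to $l$. First I would observe that $\dim E=n-1<n$ forces $f$ to be birational (divisorial), so that for a general $y\in f(E)$ the fiber $f^{-1}(y)$ is contained in $E$; hence a general fiber $F$ of $f_E$ is a genuine fiber of $f$, mapped to a single point, and every irreducible curve $C\subseteq F$ satisfies $[C]=\lambda_C[l]$ in $N_1(X)$ for some $\lambda_C\in\mathbb{Q}_{>0}$. Next I would record two standard Mori-theoretic inputs to be used as black boxes: that $-K_X$ is $f$-ample, so $-K_X\cdot l>0$ and $\mathcal{O}_F(-K_X)$ is ample; and that the exceptional divisor of a divisorial extremal contraction is negative on the ray, i.e. $E\cdot l<0$.

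The core of the argument is to prove that the image $G:=\mathrm{Im}(\mathrm{Pic}(X)\rightarrow\mathrm{Pic}(F))$ is infinite cyclic. For this I would introduce the degree homomorphism $\deg_l\colon G\rightarrow\mathbb{Z}$, $M|_F\mapsto M\cdot l$, which is well defined because $l\subseteq F$. If $M\cdot l=0$ then $M\cdot C=\lambda_C(M\cdot l)=0$ for every curve $C\subseteq F$, so $M|_F$ is numerically trivial on $F$. Here I would use that $F$ is \emph{rationally connected}: being a general fiber of a $K_X$-negative Fano--Mori contraction it is covered by contracted rational curves, and by generic smoothness (we work in characteristic zero) $F$ is smooth, hence rationally connected. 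Consequently $H^1(F,\mathcal{O}_F)=0$ and $\mathrm{Pic}(F)$ is a free abelian group on which numerical and linear equivalence coincide, so $M|_F=\mathcal{O}_F$. Thus $\deg_l$ is injective, $G$ embeds in $\mathbb{Z}$ and is cyclic, and it is nonzero since $\mathcal{O}_F(-K_X)$ is ample. Choosing $L\in\mathrm{Pic}(X)$ with $L|_F$ a generator of $G$, normalized so that $L\cdot l>0$, produces the Cartier divisor of assertion $(1)$.

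Assertion $(2)$ then falls out of the same bookkeeping, and it simultaneously finishes $(1)$. Since $\mathcal{O}_F(-K_X)\in G$ with $-K_X\cdot l>0$, writing $-K_X|_F=pL|_F$ and comparing degrees against $l$ forces $p>0$; as $pL|_F$ is ample, so is $L|_F$, which is the ampleness claim in $(1)$. Likewise $\mathcal{O}_F(-E)\in G$ with $-E\cdot l>0$ gives $-E|_F=qL|_F$ with $q>0$, so $p,q\in\mathbb{N}$ as required.

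The step I expect to be the main obstacle is the passage from \emph{numerical} triviality to \emph{linear} triviality on $F$, which is exactly what upgrades ``rank one'' to a genuinely cyclic image with an explicit generator. This hinges on controlling the geometry of the general fiber: establishing that $F$ is smooth and rationally connected, so that $\mathrm{Pic}(F)$ carries neither a continuous nor a torsion part, is the real content, and it is here that the characteristic-zero hypothesis and the Fano--Mori nature of $f$ are indispensable. The two inequalities $-K_X\cdot l>0$ and $E\cdot l<0$ are standard, but they must be in hand before the degree computation can pin down the signs $p,q>0$.
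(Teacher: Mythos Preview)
This theorem is not proved in the paper under review: it is quoted verbatim from Ando's article \cite{ando} (Theorem~2.1 there) and used as a black box in the proof of Theorem~\ref{3,0type}. There is therefore no ``paper's own proof'' to compare your proposal against.

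That said, your argument is essentially sound but takes a detour. The step you flag as the main obstacle---upgrading numerical triviality on $F$ to linear triviality via rational connectedness and $H^1(F,\mathcal{O}_F)=0$---can be bypassed entirely by the contraction theorem itself. One of the defining properties of the extremal contraction $f$ is that a line bundle $M$ on $X$ satisfies $M\cdot l=0$ if and only if $M\cong f^*M'$ for some line bundle $M'$ on $Y$; since $F$ maps to a point, $M|_F\cong\mathcal{O}_F$ follows immediately. This shows directly that $\deg_l$ is injective on $G$, without needing $F$ to be smooth or rationally connected. Your route requires justifying smoothness of a general fiber of $f_E\colon E\to f(E)$ (generic smoothness needs the source $E$ smooth, which is not given), and then invoking the 1992 rational connectedness theory---machinery that postdates Ando's 1985 paper and is unnecessary here. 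The remaining bookkeeping ($-K_X\cdot l>0$, $E\cdot l<0$, hence $p,q>0$ and $L|_F$ ample) is as you describe.
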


\begin{theorem}[\cite{beltrametti}, Proposition 2.4 and \cite{fujita}]\label{fujita}
Let X be a smooth 4-fold whose canonical bundle is not nef.
Let $\phi:X\rightarrow Y$ be a contraction of an extremal ray.
If $\phi$ contracts an effective divisor $E$ to a point, then E is isomorphic to either $\mathbb{P}^3$, a $($ possibly singular$)$
hyperquadric or a $($possibly non-normal$)$ Del Pezzo variety. 
\end{theorem}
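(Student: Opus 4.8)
The plan is to equip the exceptional divisor $E$ with a natural polarization, compute its canonical class by adjunction, and then read the classification off the index of the resulting Fano variety via the theory of polarized varieties. Since $\phi$ contracts a $K_X$-negative extremal ray and $E$ is the exceptional divisor of this divisorial contraction, both $-K_X$ and $-E$ are $\phi$-ample: the first because the ray is $K_X$-negative (this is where the hypothesis that $K_X$ is not nef enters, via the ray), and the second because every contracted curve $C\subset E$ satisfies $E\cdot C<0$. As $\phi(E)$ is a single point, the general fibre in Ando's Theorem \ref{ando1} is all of $E$, so that theorem furnishes a Cartier divisor $L$ on $X$ with $L|_E$ ample together with integers $p,q$ satisfying $\mathcal{O}_E(-K_X)\cong\mathcal{O}_E(pL)$ and $\mathcal{O}_E(-E)\cong\mathcal{O}_E(qL)$; ampleness of $-K_X|_E$ and of $-E|_E$ forces $p,q\geqslant1$.

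First I would invoke adjunction. Because $X$ is smooth and $E$ is a prime Cartier divisor, $E$ is Gorenstein and its dualizing sheaf satisfies $\omega_E\cong(\omega_X\otimes\mathcal{O}_X(E))|_E$, even when $E$ is singular or non-normal. Hence
\[
-K_E=-K_X|_E-E|_E=(p+q)\,L|_E ,
\]
so that $(E,L|_E)$ is a three-dimensional polarized variety with ample Cartier polarization and $-K_E=r\,L|_E$ for the index $r:=p+q\geqslant2$.

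Next I would bound the index. By the Kobayashi--Ochiai theorem in the form valid for arbitrary (possibly singular, possibly non-normal) polarized varieties --- precisely the content assembled in \cite{beltrametti} and \cite{fujita} --- one has $r\leqslant\dim E+1=4$, so $r\in\{2,3,4\}$. The three cases are then distinguished by their index: $r=4$ forces $E\cong\mathbb{P}^3$; $r=3$ forces $E$ to be a (possibly singular) hyperquadric; and $r=2$ means, by definition of a Del Pezzo variety ($-K_V=(\dim V-1)L$), that $(E,L|_E)$ is a (possibly non-normal) Del Pezzo variety. This yields exactly the trichotomy in the statement.

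The main obstacle is that $E$, being the exceptional divisor of an extremal contraction on a smooth fourfold, need carry neither smoothness nor even normality, whereas the classical Kobayashi--Ochiai theorem presupposes such regularity. The crux is therefore the passage to Fujita's $\Delta$-genus machinery together with the Beltrametti--Sommese adjunction-theoretic analysis, which extend both the index bound $r\leqslant\dim E+1$ and the case-by-case identification to this singular, non-normal setting; verifying that $L|_E$ is genuinely Cartier and ample and that adjunction holds at the level of dualizing sheaves on the possibly degenerate $E$ is the delicate input that makes the argument run.
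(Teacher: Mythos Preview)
The paper does not prove this theorem; it is stated as a citation from \cite{beltrametti} and \cite{fujita} with no accompanying argument, so there is no in-paper proof to compare against. Your outline is the standard route taken in those references: polarize $E$ via Ando's Theorem~\ref{ando1}, use adjunction on the Cartier divisor $E\subset X$ to get $-K_E=(p+q)L|_E$ with $p+q\geqslant 2$, and then invoke the Fujita/Kobayashi--Ochiai classification of polarized varieties by index (extended to the singular and non-normal setting) to obtain the trichotomy $\mathbb{P}^3$/hyperquadric/Del Pezzo. The only point worth flagging is that you should make explicit why $p,q\geqslant 1$ follows: Ando's theorem asserts $p,q\in\mathbb{N}$, and the ampleness of $L|_E$ together with $-K_X|_E$ and $-E|_E$ both being ample (the latter since $E\cdot C<0$ for contracted curves) forces positivity; you have this, but it is the linchpin of the inequality $r\geqslant 2$ that rules out the $r=1$ case.
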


\begin{theorem}[\cite{kollar}, Theorem 5.14 and Remark 5.15]\label{kollar}
Let X be a projective variety with at most locally complete intersection singularities. Then there is a rational curve $C$ on X such that $-K_X.C\leqslant\dim X+1$.
\end{theorem}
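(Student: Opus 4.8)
The plan is to reprove this by Mori's bend-and-break method, the lci hypothesis entering only to guarantee that $K_X$ is Cartier (lci implies Gorenstein), so that $-K_X\cdot C$ is a well-defined integer and the deformation theory of maps into the smooth locus of $X$ carries the usual numerical estimates. The substance of the statement lies in producing the rational curve, so I would start from a curve $C_0$ with $K_X\cdot C_0<0$ (such a curve exists whenever $K_X$ is not nef, in particular in every Fano application made here). Passing to the normalization $\nu\colon B\to X$ with $B$ smooth of genus $g$ and fixing one point $P\in B$, the local dimension of $\mathrm{Hom}(B,X)$ at $[\nu]$ is at least $\chi(B,\nu^{*}\mathcal{T}_X)-\dim X=-K_X\cdot C_0+(\dim X)(1-g)-\dim X=-K_X\cdot C_0-g\dim X$. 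If $g=0$ we already have a rational curve; the difficulty is $g\geqslant1$, where this estimate may be negative.

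To defeat positive genus I would reduce the whole configuration modulo a prime $p$ and precompose $\nu$ with the $e$-th power of Frobenius. This leaves $B$ and its genus unchanged but multiplies $-K_X\cdot C_0$ by $p^{e}$, so for $e\gg0$ the bound $p^{e}(-K_X\cdot C_0)-g\dim X$ is positive and the family of deformations of the Frobenius-twisted map fixing $P$ is positive-dimensional. Since the source $B$ is rigid while its image sweeps out a positive-dimensional family through the fixed point $\nu(P)$, the family cannot be proper, and bend-and-break forces the image one-cycle to degenerate, splitting off a rational curve through $\nu(P)$. This yields a rational curve on $X_{\overline{\mathbb{F}}_p}$ of bounded anticanonical degree; since the existence of such a curve is a constructible condition holding for infinitely many $p$, a standard specialization argument returns a rational curve on $X$ itself.

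For the length bound I would choose $f\colon\mathbb{P}^1\to X$, with image not contained in $\mathrm{Sing}(X)$, minimizing $-K_X\cdot C$ among rational curves through a fixed general point $x=f(0)$, and show that minimality forces $-K_X\cdot C\leqslant\dim X+1$. Fixing the two points $0,\infty\in\mathbb{P}^1$, the deformation space has dimension at least
\[
\chi\bigl(\mathbb{P}^1,\,f^{*}\mathcal{T}_X(-2)\bigr)=-K_X\cdot C-\dim X .
\]
If $-K_X\cdot C\geqslant\dim X+2$ this exceeds $1=\dim\mathrm{Aut}(\mathbb{P}^1;0,\infty)$, so there is a genuinely non-trivial deformation fixing both points; the two-point bend-and-break lemma then degenerates $C$ into a connected cycle of at least two rational curves, one of which still passes through $x$ and has strictly smaller anticanonical degree, contradicting minimality. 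Hence no minimal such curve can be broken, and $-K_X\cdot C\leqslant\dim X+1$.

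The main obstacle I anticipate is twofold. First is the characteristic-$p$ machinery together with the specialization back to characteristic zero: one must spread $X$ out over a finitely generated base, control the reduction prime by prime, and keep the anticanonical degree of the produced curve bounded independently of $p$ so that the limiting curve in characteristic zero still satisfies the bound. Second is checking that the deformation-theoretic inequality $\dim\geqslant\chi$ survives at the lci singularities. This is exactly where the lci hypothesis is used: it makes $K_X$ Cartier and makes the obstruction theory of $\mathrm{Hom}(\mathbb{P}^1,X)$ perfect, so that the estimate persists provided the curves produced meet $\mathrm{Sing}(X)$ only in a proper closed subset---a condition one must verify is preserved under the successive breakings.
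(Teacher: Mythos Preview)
The paper does not prove this statement at all; it is quoted verbatim from Koll\'ar's book and used as a black box in the proof of Theorem~\ref{3,0type}. So there is no ``paper's own proof'' to compare against, and your task reduces to reproducing Koll\'ar's argument. Your outline is precisely the standard bend-and-break proof from \cite{kollar}: produce rational curves by Frobenius amplification in characteristic $p$ followed by specialization, then bound the anticanonical degree by the two-pointed break. The obstacles you flag (spreading out and controlling the degree uniformly in $p$; checking that the dimension estimate $\dim_{[f]}\mathrm{Hom}\geqslant\chi$ survives for lci targets) are exactly the points treated in \cite{kollar}, II.1--II.5, and your diagnosis of where the lci hypothesis enters (Gorenstein, hence $K_X$ Cartier, and a perfect obstruction theory for maps landing generically in the smooth locus) is correct.

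One remark: you rightly observe that the statement as quoted in the paper is incomplete---an abelian variety is smooth projective with no rational curves---and that some negativity hypothesis on $K_X$ is needed to start the argument. In Koll\'ar's formulation the input is a curve $C_0$ with $K_X\cdot C_0<0$, and the output rational curve passes through any chosen point of $C_0$. In the paper's application the variety $E$ is Fano by Theorem~\ref{fujita}, so this hypothesis is available; your parenthetical ``in particular in every Fano application made here'' handles this correctly.
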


\begin{proof}[Proof of Theorem \ref{3,0type}]
Let $E$ be the exceptional divisor on X. Since $E$ is a Cartier divisor on smooth manifold $X$, $E$ is a locally complete intersection variety.
$E$ is Fano by Theorem \ref{fujita}. 
By Theorem \ref{kollar}, we have a rational curve $C$ in $E$ such that $0<-K_E.C\leqslant4$.
Since 
\[-K_E.C=-K_X.C-E.C\geqslant2+1\geqslant3,\]
we have $-K_E.C=3$ or $4$. 
If $-K_E.C=3$, then we get $-K_X.C=2$ and $-E.C=1$. By Lemma \ref{p-bundle}, this case does not occur. 
Hence we have $-K_E.C=4$. In this case if $-K_X.C=2$, then it is contradiction by Lemma \ref{p-bundle}.
Therefore $-K_X.C=3$, $-E.C=1$. In this case by Theorem \ref{ando1} and Theorem \ref{fujita} we have $E\cong\mathbb{P}^3$ and $N_{E/X}\cong\mathcal{O}_{\mathbb{P}^3}(-1)$.
Therefore X is a blowing-up of smooth manifold Y at a point. By Lemma \ref{blowup} we have $Y\cong\mathbb{P}^4$ and complete the proof.  
\end{proof}

\begin{lemma}\label{p-bundle}
Let X be a smooth variety with nef vector bundle $\Lambda^2\mathcal{T}_X$.
Let C be an extremal rational curve in X such that $-K_X.C=2$.
Then the contraction map $\varphi:X\rightarrow Y$ is $\mathbb{P}^1$-bundle over a smooth variety $Y$ and C is in a fiber of $\varphi$.  
\end{lemma}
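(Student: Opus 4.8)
The plan is to read off the splitting type of $\mathcal T_X$ along $C$ from the two facts that $C$ is rational and that $\Lambda^2\mathcal T_X$ is nef on $C$, to deduce from it that $C$ is a smooth, free, unbreakable minimal rational curve generating $R=\mathbb R_{\geqslant0}[C]$, and then to identify the contraction of $R$ with a $\mathbb P^1$-bundle using Ando's structure theorems together with Lemma \ref{2}. Concretely, normalize $C$ by $\nu\colon\mathbb P^1\to C$ and, exactly as in the proof of Lemma \ref{2}, write $\nu^*\mathcal T_X\cong\bigoplus_{i=1}^{n}\mathcal O_{\mathbb P^1}(a_i)$ with $a_1\geqslant\cdots\geqslant a_n$; then $a_1\geqslant 2$, nefness of $\Lambda^2\mathcal T_X$ gives $a_{n-1}+a_n\geqslant 0$, hence $a_i\geqslant 0$ for $i\leqslant n-1$, and since $\sum a_i=-K_X\cdot C=2$ this forces
\[
\nu^*\mathcal T_X\;\cong\;\mathcal O_{\mathbb P^1}(2)\oplus\mathcal O_{\mathbb P^1}^{\oplus(n-1)}.
\]
The same computation applies to every rational curve numerically equivalent to $C$, so the component $\mathcal M$ of $\operatorname{RatCurves}^n(X)$ through $[C]$ parametrizes only curves of this splitting type; in particular $H^1(\mathbb P^1,\nu^*\mathcal T_X)=0$, so $\mathcal M$ is smooth of dimension $-K_X\cdot C+n-3=n-1$, and $C$ is unbreakable — by Lemma \ref{2} every component of a connected degeneration of $C$ is a rational curve of $-K_X$-degree $\geqslant 2$, and the total degree is only $2$. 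Hence $\mathcal M$ is proper and $C$ is a smooth, free, unbreakable minimal rational curve; let $\varphi\colon X\to Y$ be the contraction of $R=\mathbb R_{\geqslant0}[C]$.

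The core of the argument is to show that $\varphi$ is of fibre type and equidimensional with one-dimensional fibres. If $\varphi$ were birational, then by Theorem \ref{3,2} a one-dimensional fibre would be a single $-K_X$-degree-one rational curve, contradicting Lemma \ref{2}; small contractions are excluded since their exceptional components are copies of $\mathbb P^2$ with normal bundle $\mathcal O_{\mathbb P^2}(-1)^{\oplus 2}$, on which $\Lambda^2\mathcal T_X$ surjects onto $\Lambda^2N\cong\mathcal O_{\mathbb P^2}(-2)$ (the argument of Theorem \ref{2,0case}, using Theorem \ref{kawamata}); so $\varphi$ would contract a divisor $E$ to a point or a curve, and analyzing $E$ and its conormal bundle via Theorems \ref{ando1} and \ref{fujita} one obtains a short list of pairs $(E,N_{E/X})$, in each of which restricting $\mathcal T_X$ to a minimal rational curve of $R$ lying in $E$ (using $E\cdot C<0$) produces a splitting type with $a_{n-1}+a_n<0$ — a contradiction, just as in the proofs of Theorems \ref{3,1type} and \ref{3,0type}. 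Hence $\varphi$ is of fibre type; for a general fibre $F$ the sequence $0\to\mathcal T_F\to\mathcal T_X|_F\to N_{F/X}\to 0$ with $N_{F/X}$ trivial, together with nefness of $\Lambda^2\mathcal T_X|_F$, forces $\mathcal T_F$ nef, while minimality of $C$ gives $F$ pseudo-index $2$ and $F$ is a single $\mathcal M$-chain-equivalence class; running through the short list of Fano manifolds of dimension $\leqslant n-1$ with nef tangent bundle and pseudo-index $2$ and discarding those whose minimal family does not chain-connect them leaves $F\cong\mathbb P^1$, apart from the quadric-surface and flag-threefold possibilities. Once $\varphi$ has one-dimensional fibres, Theorem \ref{3,2} makes every fibre pure of dimension one and Theorem \ref{ando2} shows $Y$ is smooth and $\varphi$ is a conic bundle; a reducible fibre of a conic bundle is a union of two $-K_X$-degree-one lines, impossible by Lemma \ref{2}, so $\varphi$ is a $\mathbb P^1$-bundle over the smooth variety $Y$. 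Finally $[C]$ spans $R=\ker\varphi_*$, so $C$ is $\varphi$-vertical; being irreducible it lies in a single fibre, which is a $\mathbb P^1$, whence $C$ equals that fibre.

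I expect the genuine obstacle to be the exclusion of fibres of dimension $\geqslant 2$ in the fibre-type case. The soft arguments above do not by themselves rule out a contraction whose general fibre is a smooth quadric surface, or the flag threefold $\mathbb P(\mathcal T_{\mathbb P^2})$, with its two rulings interchanged by monodromy around the discriminant of $\varphi$; eliminating such quadric-bundle behaviour seems to require a genuine local study of $\Lambda^2\mathcal T_X$ along a degenerate fibre — for instance, exhibiting through a singular point of such a fibre a rational curve along which $\nu^*\mathcal T_X$ acquires two strictly negative summands, violating $a_{n-1}+a_n\geqslant 0$.
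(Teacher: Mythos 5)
Your opening computation is exactly the paper's Lemma \ref{restriction}: nefness of $\Lambda^2\mathcal T_X$ plus $-K_X\cdot C=2$ forces $\nu^*\mathcal T_X\cong\mathcal O_{\mathbb P^1}(2)\oplus\mathcal O_{\mathbb P^1}^{\oplus(n-1)}$ for $C$ and for every deformation of it, and the unbreakability observation is correct. But the route you then take --- classify the possible contractions of $R$ via Ando, Kawamata, Fujita/Takagi and whittle the list down to a conic bundle --- has a genuine gap, and it is precisely the one you flag yourself at the end: nothing in your argument excludes a fibre-type contraction of length $2$ whose fibres have dimension $\geqslant 2$ (e.g.\ a quadric-surface bundle whose two rulings are interchanged by monodromy, so that both rulings lie in the single extremal ray $R$). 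Wi\'sniewski's inequality only gives $\dim F\geqslant l(R)-1=1$, so ``one-dimensional fibres'' is not free, and without it you cannot invoke Theorem \ref{ando2} at all. A secondary defect is that several of the tools you cite (Theorems \ref{kawamata}, \ref{fujita}, \ref{takagi}) are fourfold statements, while the lemma is stated for arbitrary smooth $X$.

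The paper avoids this entirely by running Mori's argument from Theorem 8 of \cite{mori} instead of the contraction-theoretic case analysis. From the constant splitting type $\mathcal O(2)\oplus\mathcal O^{\oplus(n-1)}$ one gets that the component $H$ of $\operatorname{Hom}(\mathbb P^1,X)$ through $\nu$ is smooth of dimension $n+2$, that the universal family $Z=(H\times\mathbb P^1)/\operatorname{Aut}(\mathbb P^1)$ is an $n$-dimensional $\mathbb P^1$-bundle over the geometric quotient $Y=H/\operatorname{Aut}(\mathbb P^1)$, and that the evaluation map $p:Z\to X$ is a smooth, proper (by unbreakability) morphism of relative dimension $0$, hence \'etale, hence an isomorphism since $X$ is Fano and therefore simply connected. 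This transplants the $\mathbb P^1$-bundle structure $q:Z\to Y$ onto $X$ with $C$ as a fibre; in particular it automatically kills the quadric-bundle scenario you worry about, because two rulings through a general point would make $p$ \'etale of degree $2$. If you want to complete a proof along these lines, the missing ingredient is exactly this global construction (or an equivalent properness-of-the-unsplit-family argument), not a further local study of $\Lambda^2\mathcal T_X$ along degenerate fibres.
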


\begin{proof}
The proof is based on the proof of Theorem 8 in \cite{mori}. 
Let $\nu:\mathbb{P}^1\rightarrow C\subset X$ be the normalization.
Let $H$ be an irreducible component of the Hom scheme $Hom(\mathbb{P}^1,X)$ containing the morphism $\nu$.
In this case $h^*\mathcal{T}_X\cong\mathcal{O}_{\mathbb{P}^1}(2)\oplus\mathcal{O}^3_{\mathbb{P}^1}$ for 
all $h\in H$ by Lemma \ref{restriction}. 
Then $\dim H=n+2$ and $H$ is smooth.
Let $G$ be the Aut($\mathbb{P}^1$).
Since the natural action of $G$ on $Hom(\mathbb{P}^1,X)$ induces the free action $\sigma$ of $G$ on the connected component containing $H$ and, consequently,
on $H$: 
\[\sigma:G\times H\longrightarrow H, \sigma(g,h)x=h(g^{-1}x), g\in G,h\in H,x\in\mathbb{P}^1, \] 
$G$ also acts on $H\times\mathbb{P}^1$ as follows:
\[\tau:G\times H\times\mathbb{P}^1\longrightarrow H\times\mathbb{P}^1, \tau(g,h,x)=(\sigma(g,h),gx), g\in G,h\in H,x\in\mathbb{P}^1. \] 
Let $Chow^dX$ be the Chow variety parametrizing $1$-dimensional effective cycles $C$ of $X$ with $(C.-K_X)=d$.
Then we have a morphism $\alpha:H\longrightarrow Chow^2X$.
Let Y be the normalization $Y\longrightarrow \overline{\alpha(H)}$ of the closure $\overline{\alpha(H)}$ of $\alpha(H)(\subset Chow^2X)$ in the field 
$k(H)^{G}$ of the $G$-invariant rational function on $H$.
Then $(Y,\Gamma)$ is the geometric quotient $\Gamma:H\rightarrow Y$ of $H$ by $G$ in the sense of Mumford and $Y$ is smooth projective. 
We have also a $G$-invariant morphism:
\[F:H\times\mathbb{P}^1\longrightarrow Y\times X, F(h,x)=(\Gamma(h), h(x)), h\in H,x\in\mathbb{P}^1. \] 
 Let $Z=Spec_{Y\times X}[(F_{\ast}\mathcal{O}_{H\times\mathbb{P}^1})^{G}]$.
 Then $Z$ is the geometric quotient $H\times\mathbb{P}^1/G$ and is a $\mathbb{P}^1$-bundle $q:Z\rightarrow Y$ in the \'etale topology, $\dim Z=n$.
 Moreover the natural projection $p:Z\rightarrow X$ is a smooth morphism from $n$-dimensional smooth variety to $n$-dimensional Fano variety.
 Therefore $p$ is isomorphism.  
\end{proof}

\begin{lemma}\label{restriction}
Let $X$ be a Fano fourfold with nef vector bundle $\displaystyle \Lambda ^2 \mathcal{T}_X$. Let $C$ be a rational curve in $X$ and $\nu:\mathbb{P}^1\rightarrow C$ the normalization.
If $-K_X.C=2$, then $\nu^*\mathcal{T}_X\cong\mathcal{O}_{\mathbb{P}^1}(2)\oplus\mathcal{O}^3_{\mathbb{P}^1}$.
\end{lemma}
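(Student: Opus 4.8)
The plan is to run the numerical bookkeeping from the proof of Lemma \ref{2}, now forcing all the inequalities there to be equalities. Write
\[
\nu^*\mathcal{T}_X\cong\mathcal{O}_{\mathbb{P}^1}(a_1)\oplus\mathcal{O}_{\mathbb{P}^1}(a_2)\oplus\mathcal{O}_{\mathbb{P}^1}(a_3)\oplus\mathcal{O}_{\mathbb{P}^1}(a_4),\qquad a_1\geqslant a_2\geqslant a_3\geqslant a_4.
\]
Since $\nu$ is the normalization of the curve $C$, it is non-constant, so the differential $d\nu\colon\mathcal{T}_{\mathbb{P}^1}\to\nu^*\mathcal{T}_X$ is a nonzero map of sheaves out of $\mathcal{O}_{\mathbb{P}^1}(2)$; hence $a_1\geqslant 2$. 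By construction $a_1+a_2+a_3+a_4=-K_X.C=2$.

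Next I would feed in the hypothesis that $\Lambda^2\mathcal{T}_X$ is nef. Pulling back along $\nu$ and commuting exterior power with pullback,
\[
\nu^*\Lambda^2\mathcal{T}_X\cong\Lambda^2(\nu^*\mathcal{T}_X)\cong\bigoplus_{1\leqslant i<j\leqslant 4}\mathcal{O}_{\mathbb{P}^1}(a_i+a_j)
\]
is a nef bundle, being the pullback of a nef bundle. A direct sum of line bundles on $\mathbb{P}^1$ is nef precisely when every summand has nonnegative degree, and the summand of smallest degree here is $\mathcal{O}_{\mathbb{P}^1}(a_3+a_4)$; therefore $a_3+a_4\geqslant 0$.

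The rest is arithmetic. From $a_1\geqslant 2$ and $a_1+a_2+a_3+a_4=2$ we get $a_2+(a_3+a_4)\leqslant 0$, so $a_2\leqslant-(a_3+a_4)\leqslant 0$. Then $a_2\leqslant 0$ together with $a_2\geqslant a_3\geqslant a_4$ gives $a_3\leqslant 0$ and $a_4\leqslant 0$, hence $a_3+a_4\leqslant 0$; combined with $a_3+a_4\geqslant 0$ this forces $a_3+a_4=0$ and therefore $a_3=a_4=0$. Feeding this back, $a_2\leqslant 0$ and $a_2\geqslant a_3=0$ give $a_2=0$, and then $a_1=2$. Thus $\nu^*\mathcal{T}_X\cong\mathcal{O}_{\mathbb{P}^1}(2)\oplus\mathcal{O}_{\mathbb{P}^1}^{\oplus 3}$.

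I do not expect a genuine obstacle: the lemma is exactly the degenerate case ($n=4$, $-K_X.C=2$) of the inequality already established in Lemma \ref{2}, so no new geometric input is needed. The only points worth stating carefully are the non-vanishing of $d\nu$ (which gives $a_1\geqslant 2$) and the elementary fact that a split bundle on $\mathbb{P}^1$ is nef iff every summand has nonnegative degree; both are standard.
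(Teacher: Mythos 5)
Your proof is correct and is exactly the ``easy computation'' the paper leaves implicit: it specializes the bookkeeping from Lemma \ref{2} (Grothendieck splitting, $a_1\geqslant 2$ from the nonvanishing differential, $a_3+a_4\geqslant 0$ from nefness of $\Lambda^2\mathcal{T}_X$) to the equality case $n=4$, $-K_X.C=2$. No gaps; the arithmetic forcing $a_2=a_3=a_4=0$ and $a_1=2$ is sound.
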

\begin{proof}
By easy computation.
\end{proof}

\subsection{(4,1)-type}In this subsection we consider the case where X has only $(4,1)$-type contraction.

\begin{theorem}\label{4,1type}
Let X be a Fano fourfold with only fiber type contraction and having a $(4,1)$-type contraction. If $\displaystyle \Lambda ^2 \mathcal{T}_X$ is nef, then X is isomorphic to the product of the projective line $\mathbb{P}^1$ 
and smooth projective threefold Y with nef tangent bundle $\mathcal{T}_Y$.
\end{theorem}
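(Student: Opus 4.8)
The plan is to run the two-ray game on $X$. Write $\varphi\colon X\to B$ for the $(4,1)$-type contraction. Its image is normal, hence $B$ is a smooth curve, and since $X$ is Fano we get $B\cong\mathbb{P}^1$. A general fibre $F$ is a smooth threefold with $N_{F/X}\cong\mathcal{O}_F$, so $-K_F=-K_X|_F$ is ample, i.e. $F$ is Fano; applying $\Lambda^2$ to $0\to\mathcal{T}_F\to\mathcal{T}_X|_F\to\mathcal{O}_F\to0$ produces a surjection $\Lambda^2\mathcal{T}_X|_F\twoheadrightarrow\mathcal{T}_F$, so $\mathcal{T}_F$ is nef. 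I would then pin down the Picard number. If the ray $R_1$ contracted by $\varphi$ had length $2$, Lemma~\ref{p-bundle} would force $\varphi$ to be a $\mathbb{P}^1$-bundle over a threefold, absurd since $\dim F=3$; hence every rational curve in a general fibre has anticanonical degree $\geqslant3$, so the pseudoindex of $F$ is $\geqslant 3=\dim F$, and a Kobayashi--Ochiai type bound gives $F\cong\mathbb{P}^3$ or $F\cong Q^3$. In particular $\rho(F)=1$, whence $\rho(X)=2$.

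Now let $R_2$ be the other extremal ray and $\psi\colon X\to Y$ its contraction, which is of fibre type by hypothesis. A curve whose class lies in $R_2$ is not contracted by $\varphi$ (its class would otherwise lie in $R_1\cap R_2=\{0\}$), so it dominates $B$; consequently no curve contained in a $\varphi$-fibre has class in $R_2$. If a general fibre $G$ of $\psi$ had dimension $\geqslant2$, then $\varphi|_G$ would have a positive-dimensional fibre, which would contain a curve lying both in a $\varphi$-fibre and in $G$, i.e. in $R_1\cap R_2$ — impossible. Hence $G\cong\mathbb{P}^1$ and $\dim Y=3$, and by Lemma~\ref{2} together with $-K_X\cdot G\leqslant\dim G+1=2$ the ray $R_2$ has length $2$; Lemma~\ref{p-bundle} then gives that $\psi\colon X\to Y$ is a $\mathbb{P}^1$-bundle over a smooth threefold $Y$. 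Since $Y$ is rationally connected its Brauer group vanishes, so $X=\mathbb{P}_Y(\mathcal{E})$ for a rank-$2$ bundle $\mathcal{E}$ with tautological class $\xi$. As $X$ is a Fano $\mathbb{P}^1$-bundle over $Y$, the base $Y$ is itself Fano (hence simply connected), and $\rho(Y)=\rho(X)-1=1$.

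It remains to trivialise $\mathcal{E}$. Put $L:=\varphi^*\mathcal{O}_B(1)$, so $L^2=0$ in $\mathrm{CH}^2(X)$; writing $L=e\xi+\psi^*M$ with $e=L\cdot G$ and expanding $L^2=0$ with the Grothendieck relation, then pushing forward along $\psi$, yields $2M\equiv -e\,c_1(\mathcal{E})$ and $c_1(\mathcal{E})^2=4c_2(\mathcal{E})$, i.e. the discriminant $\Delta(\mathcal{E})=4c_2(\mathcal{E})-c_1(\mathcal{E})^2$ vanishes. Next I claim $\mathcal{E}$ is semistable: otherwise the maximal destabilising sub-line-bundle defines a section $S\cong Y\hookrightarrow X$ with anti-nef normal bundle, and the extremal ray $R$ of $X$ with $S\cdot R<0$ would have birational contraction — since $\rho(Y)=1$, the divisor $S$ cannot be swept out by fibres of a fibre-type contraction — contradicting the hypothesis that all contractions of $X$ are of fibre type. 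A semistable bundle with $\Delta=0$ is projectively flat (Bogomolov), and over the simply connected $Y$ a projectively flat $\mathbb{P}^1$-bundle is trivial, so $\mathcal{E}\cong\mathcal{O}_Y(D)^{\oplus2}$ and $X\cong\mathbb{P}^1\times Y$. Under this identification $\varphi$ must be the first projection, so a general fibre of $\varphi$ is $Y$; by the first paragraph $\mathcal{T}_Y$ is nef, which is the assertion.

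The hard part I expect is the trivialisation of $\mathcal{E}$ in the last paragraph: ruling out a destabilising sub-line-bundle via the "only fibre-type contractions" hypothesis (this is precisely where the information $\rho(Y)=1$ from the second paragraph is used), and then invoking the Bogomolov inequality and simple connectedness of $Y$ to pass from $\Delta(\mathcal{E})=0$ to a product structure. A secondary delicate point is the fibre-dimension bound for $\psi$ in the second paragraph, which rests on $R_1\cap R_2=\{0\}$ and hence on having already shown $\rho(X)=2$.
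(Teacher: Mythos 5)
Your overall strategy is sound and, up to the point where the second ray is contracted, runs parallel to the paper: the paper also observes that the other contraction $\rho\colon X\to Y$ has one-dimensional fibres, invokes Ando's theorem and Lemma~\ref{2} to make it a $\mathbb{P}^1$-bundle $X\cong\mathbb{P}_Y(\mathcal{E})$ over a Fano threefold, and then must trivialise $\mathcal{E}$. From there the two arguments genuinely diverge. The paper shows that the fibres of the $(4,1)$-contraction are \emph{sections} of $\rho$, deduces that $\rho^{-1}(C')$ is a Hirzebruch surface carrying a second ruling (hence $\cong\mathbb{P}^1\times\mathbb{P}^1$) for every rational curve $C'\subset Y$, and concludes by the Biswas--dos Santos theorem (Theorem~\ref{biswas}) that $\mathcal{E}$ is trivial up to twist. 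You instead extract the numerical identity $\Delta(\mathcal{E})=0$ from $(\varphi^*\mathcal{O}_{\mathbb{P}^1}(1))^2=0$, prove semistability from the hypothesis that all contractions are of fibre type, and then appeal to projective flatness plus simple connectedness. Your route has the advantage of not needing the ``fibres of $\varphi$ are sections of $\rho$'' claim (which the paper asserts without proof; your computation works for any covering degree $e\geqslant1$), at the price of heavier input from gauge theory. Note also that your first paragraph is a detour: $\varphi$ is an elementary contraction onto $\mathbb{P}^1$, so $\rho(X)=2$ is immediate, and if you do want $F\cong\mathbb{P}^3$ or $Q^3$ it follows from the nefness of $\mathcal{T}_F$ and Theorem~\ref{3-nef} without invoking the (much deeper, and much later) ``pseudoindex $\geqslant\dim$'' theorem.

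Three points need repair, all fixable. First, ``$Y$ rationally connected $\Rightarrow\mathrm{Br}(Y)=0$'' is false (Artin--Mumford); you should instead use that $Y$ is a smooth Fano threefold (torsion-free cohomology), or simply quote Proposition~\ref{bundle}. Second, the maximal destabilising sub-line-bundle $\mathcal{L}\subset\mathcal{E}$ need not have locally free quotient, so it defines only a rational section; the argument should be run with the effective divisor $S\in|\xi-\psi^*c_1(\mathcal{L})|$: since both rays are of fibre type, every nonzero effective divisor on $X$ is nef, whereas $S^2\cdot\psi^*H^2=(c_1(\mathcal{E})-2c_1(\mathcal{L}))\cdot H^2<0$ if $\mathcal{L}$ destabilises --- same contradiction, cleaner bookkeeping. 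Third, ``semistable with $\Delta=0$ is projectively flat'' is only automatic for \emph{stable} bundles; in the strictly semistable case you must observe that the Jordan--H\"older factors are line bundles of equal slope and that $H^1(Y,\mathcal{O}_Y)=0$ kills the extension, which again yields $\mathcal{E}\cong\mathcal{L}^{\oplus2}$ (and in fact the stable case is then vacuous, since a projectively flat bundle on simply connected $Y$ splits and hence cannot be stable). With these adjustments your proof is complete and constitutes a legitimate alternative to the paper's.
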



\begin{proof}[Proof of Theorem \ref{4,1type}]
Let $f:X\rightarrow C$ be a $(4,1)$-type contraction.
Then $C$ is isomorphic to $\mathbb{P}^1$.
Let $\rho:X\rightarrow Y$ be another extremal contraction map of extremal ray.
Then the dimension of a non-trivial fiber of $\rho$ is one.
By Theorem \ref{ando}, we know that $\rho$ is a conic bundle.
By Lemma \ref{2} , $\rho$ is a $\mathbb{P}^1$-bundle.
Moreover Y is a Fano threefold by \cite{kollar1}.
We can show that a fiber of $f$ is the section of $\rho$. 
Therefore we have a rank 2 vector bundle $\mathcal{E}$ on $Y$
such that $\rho$ is the natural projection $\rho:X\cong\mathbb{P}_Y(\mathcal{E})\rightarrow Y$.
Take a rational curve $C'$ in $Y$ and put $\nu:\mathbb{P}^1\rightarrow C'$ its normalization.
 In this case we have a morphism of fiber type of Hirzebruch surface $f|_S:S\cong\mathbb{P}_{\mathbb{P}^1}(\nu^*\mathcal{E})\rightarrow C$.  
Therefore $\mathbb{P}_{\mathbb{P}^1}(\nu^*\mathcal{E})\cong\mathbb{P}^1\times\mathbb{P}^1$.
By theorem \ref{biswas}, we have that $\mathcal{E}$ is trivial up to twist.
Therefore we get $X\cong Y\times\mathbb{P}^1$.
We put projections $p:X\rightarrow Y$ and $q:X\rightarrow \mathbb{P}^1$.
By the nefness of \[\Lambda^2\mathcal{T}_X\cong\Lambda^2(p^*\mathcal{T}_Y)\bigoplus (p^*\mathcal{T}_Y\bigotimes q^*\mathcal{T}_{\mathbb{P}^1}),\]
we have the tangent bundle $\mathcal{T}_Y$ is nef. 
\end{proof}

\begin{theorem}[\cite{biswas}, Theorem 1.1]\label{biswas}
Let $\mathcal{E}$ be a vector bundle of rank r over a rationally connected smooth projective variety $X$ over $\mathbb{C}$ such that for every morphism
\[\gamma:\mathbb{P}^1\longrightarrow X,\]
the pullback $\gamma^*\mathcal{E}$ is isomorphic to $\mathcal{L}(\gamma)^{\oplus r}$
for some line bundle $\mathcal{L}(\gamma)^{\oplus r}$ on $\mathbb{P}^1$. 
Then there is a line bundle $\zeta$ over $X$ such that $\mathcal{E}=\zeta^{\oplus r}$.
\end{theorem}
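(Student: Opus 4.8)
The plan is to reduce the statement to the triviality of a vector bundle whose restriction to every rational curve is \emph{trivial}, and then to prove that triviality. For the reduction, apply the hypothesis to $\mathcal{E}nd(\mathcal{E})=\mathcal{E}\otimes\mathcal{E}^{\vee}$: for every $\gamma\colon\mathbb{P}^1\to X$ one gets $\gamma^*\mathcal{E}nd(\mathcal{E})\cong\mathcal{E}nd(\mathcal{L}(\gamma)^{\oplus r})\cong\mathcal{O}_{\mathbb{P}^1}^{\oplus r^2}$. Granting the Key Lemma below, $\mathcal{E}nd(\mathcal{E})\cong\mathcal{O}_X^{\oplus r^2}$, so $A:=H^0(X,\mathcal{E}nd(\mathcal{E}))$ is an associative $\mathbb{C}$-algebra of dimension $r^2$ (with composition as product). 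For every $x\in X$ the evaluation $A\to\operatorname{End}_{\mathbb{C}}(\mathcal{E}_x)\cong M_r(\mathbb{C})$ is a homomorphism of $\mathbb{C}$-algebras which is injective (a global section of a trivial bundle vanishing at one point is $0$) between spaces of equal dimension, hence an isomorphism; thus $A\cong M_r(\mathbb{C})$. Pick matrix units $E_{ij}\in A$. Each $E_{ii}$ is an idempotent endomorphism of $\mathcal{E}$ whose fibrewise rank is the constant function $\operatorname{tr}(E_{ii})\in H^0(X,\mathcal{O}_X)=\mathbb{C}$, namely $1$; hence $\zeta_i:=\operatorname{Im}(E_{ii})$ is a line subbundle, $\mathcal{E}=\bigoplus_{i=1}^r\zeta_i$, and $E_{ij}$ restricts to an isomorphism $\zeta_j\xrightarrow{\ \sim\ }\zeta_i$ with inverse $E_{ji}$. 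Therefore $\mathcal{E}\cong\zeta_1^{\oplus r}$, as claimed.

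It remains to prove the Key Lemma: \emph{if $X$ is smooth projective and rationally connected over $\mathbb{C}$ and $W$ is a vector bundle with $\gamma^*W$ trivial for every $\gamma\colon\mathbb{P}^1\to X$, then $W$ is trivial.} I would build a flat structure on $W$ out of these trivial restrictions. Fix a general point $p$; since $X$ is rationally connected there is a very free rational curve through $p$ and a second general point $y$ (see \cite{kollar}). For a rational curve $\gamma$ the triviality of $\gamma^*W$ means that the evaluation $H^0(\mathbb{P}^1,\gamma^*W)\to(\gamma^*W)_t$ is an isomorphism for every $t$, so a $\gamma$ through $p$ and $y$ yields a ``transport'' isomorphism $\phi_\gamma\colon W_p\xrightarrow{\ \sim\ }W_y$. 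The key point is that $\phi_\gamma$ is independent of $\gamma$: fixing a suitable very free curve class, the construction extends to the Kontsevich space of two-pointed genus-zero stable maps to $X$ of that class through $p$ and $y$ — on a nodal such map $W$ restricts trivially to each $\mathbb{P}^1$-component, hence (a vector bundle on a tree of $\mathbb{P}^1$'s that is trivial on each component is trivial) to the whole curve, so transport still makes sense — and transport is then a morphism from this \emph{projective} variety to $\operatorname{Isom}(W_p,W_y)$, an open subset of an affine space, hence constant. Fixing a basis of $W_p$ therefore produces nowhere-zero sections trivializing $W$ over the dense open set of points reachable from $p$ in this way; letting $p$ run through finitely many general points covers $X$, and on overlaps two such trivializations differ by a constant automorphism (the same projectivity argument applied to trees through two base points). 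Hence $W$ is a flat bundle, and since a smooth projective rationally connected complex variety is simply connected, $W$ is trivial.

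The crux — and the step I expect to be the main obstacle — is this Key Lemma, specifically the verification that transport is independent of the chosen rational curve and that the various local trivializations patch: this needs a careful use of the deformation theory and of the compactified (hence proper) moduli of very free rational curves on $X$, together with control of the locus of ``good'' pairs of points, and it is exactly the technical heart of \cite{biswas}, which I would cite for the details. An alternative route to the Key Lemma: triviality on all rational curves forces $W$ and $W^{\vee}$ to be nef with numerically trivial determinant — on a rationally connected variety nefness is detected by rational curves, on which both bundles are trivial — so $W$ is numerically flat; by the structure theory of numerically flat bundles (Demailly--Peternell--Schneider, cf.\ \cite{peternell1}) $W$ then carries a filtration whose graded pieces are Hermitian-flat, these are trivial because $X$ is simply connected, and the extensions split because $H^1(X,\mathcal{O}_X)=0$, so $W$ is trivial.
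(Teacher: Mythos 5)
The paper offers no proof of this statement: it is imported verbatim as Theorem~1.1 of \cite{biswas}, so there is nothing in the paper to compare your argument against. Judged on its own terms, your reduction is correct and cleanly executed: $\gamma^*\mathcal{E}nd(\mathcal{E})\cong\mathcal{O}_{\mathbb{P}^1}^{\oplus r^2}$ for every $\gamma$, so granting the Key Lemma one gets $H^0(X,\mathcal{E}nd(\mathcal{E}))\cong M_r(\mathbb{C})$ by evaluation at a point, and the system of matrix units then splits $\mathcal{E}$ into $r$ mutually isomorphic line subbundles (the idempotents have constant fibrewise rank equal to their trace, which lies in $H^0(X,\mathcal{O}_X)=\mathbb{C}$). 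That part is complete.

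The problem is that everything of substance sits in the Key Lemma, and you do not actually prove it. Your first route defers exactly its technical heart --- the connectedness and properness of the relevant space of two-pointed stable maps through $p$ and $y$, the fact that transport is a morphism to $\operatorname{Isom}(W_p,W_y)$, and the patching of the resulting local trivializations --- to \cite{biswas} itself, which is circular for a blind proof of a theorem attributed to that reference (though the strategy you describe is indeed the right one, and your observation that a bundle trivial on each component of a tree of $\mathbb{P}^1$'s is trivial on the tree is correct). Your alternative route has a genuine gap at its very first step: triviality of $\gamma^*W$ for every rational curve does not obviously make $W$ and $W^{\vee}$ nef. Nefness of $W$ is a condition on all curves in $\mathbb{P}(W)$, including those lying over irrational curves of $X$, and the assertion that nefness of a vector bundle is detected on rational curves is not a known general fact --- it is essentially Problem~6.4 of Campana--Peternell, quoted as open in the introduction of this very paper. (What does follow is $c_1(W)\equiv 0$, since rational curves span $N_1(X)$ on a rationally connected variety, but that is far from numerical flatness.) So as a self-contained argument the proposal is incomplete at the Key Lemma; as an account of how the cited theorem is proved, the algebraic reduction is accurate.
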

\subsection{(4,2)-type}
In this subsection we consider the case where X has only $(4,2)$-type and $(4,3)$-type contractions.
If X has a $(4,3)$-type contraction, we can show that it is $\mathbb{P}^1$-bundle. To show this, we use the following theorem.
\begin{theorem}[\cite{kachi}, Theorem 0.6]\label{kachi}
Let $X$ be a $4$-dimensional smooth projective variety, $R$ an extremal ray of $X$, and $g:X\rightarrow Y$ the contraction morphism associated to R.
Assume that $\dim Y=3$. Let $E$ be a $2$-dimensional fiber of $g$.
we call \[l_E(R):=min\{(-K_X.C) | C\;is \;an \;irreducible \;rational\; curve\; contained \;in \;E\}\] the length of $R$ at $E$. It is seen that $l_E(R)=1$ or $2$.
Assume $l_E(R)=2$. Then $E$ is irreducible and is isomorphic to $\mathbb{P}^2$, $N_{E/X}\cong\Omega^1_{\mathbb{P}^2}(1)$ and Y is smooth at P.
\end{theorem}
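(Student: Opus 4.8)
The plan is to prove this by the standard machinery for analysing fibres of Fano--Mori contractions: bound $\dim E$ from below by the length, use the resulting extremality to force rigidity of the minimal rational curves inside $E$, identify $E$ with $\mathbb{P}^2$, and finally read off $N_{E/X}$ and the smoothness of $Y$ from the local structure of $g$ near $E$. First I would set $P:=g(E)$ and record that, since $\dim Y=3<4=\dim X$, the contraction $g$ is of fibre type with one-dimensional general fibre (a conic bundle where it is equidimensional, by Theorem~\ref{ando2}), so $E$ is a special, two-dimensional fibre. The hypothesis $l_E(R)=2$ says the minimal rational curves inside $E$ have anticanonical degree $2$; by the dimension estimates for fibres of elementary contractions in terms of their length this already gives $\dim E\geqslant 2$, so we are in the extremal case, which is what should ultimately force $E\cong\mathbb{P}^2$.

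Next I would fix a minimal rational curve $C\subseteq E$ with $-K_X\cdot C=l_E(R)=2$ and analyse the splitting of $\mathcal{T}_X$ along its normalization. Since $\mathcal{O}(2)=\mathcal{T}_{\mathbb{P}^1}$ injects into $\nu^{*}\mathcal{T}_X$ and $\deg(-K_X|_C)=2$, minimality of $C$ in the covering family of $E$ makes $C$ unobstructed and forces $\mathcal{T}_X|_C\cong\mathcal{O}(2)\oplus\mathcal{O}^{\oplus 3}$; standard bend--and--break considerations then show the deformations of $C$ inside $E$ sweep out all of $E$ and that two general members of this family meet in at most one point. Through a general point of the surface $E$ therefore passes a positive-dimensional family of such curves, and a dimension count together with the ``meet in $\leqslant 1$ point'' property identifies $E$, via the classification of smooth surfaces carrying a covering family of rational curves playing the role of lines, with $\mathbb{P}^2$, $C$ being a line. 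The condition $l_E(R)=2$ also excludes multiple fibres, so $g^{-1}(P)=E$ as schemes; from $-K_X\cdot C=2$ we get $K_X|_E\cong\mathcal{O}_{\mathbb{P}^2}(-2)$, and adjunction gives $\det N_{E/X}\cong K_E\otimes(K_X|_E)^{\vee}\cong\mathcal{O}_{\mathbb{P}^2}(-1)$.

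Finally — and this is where I expect the real difficulty — I would pin down $N_{E/X}$ and the smoothness of $Y$ at $P$ simultaneously. Because $E=g^{-1}(P)$ is reduced, pulling back the maximal ideal $\mathfrak{m}_P$ yields a surjection $(\mathfrak{m}_P/\mathfrak{m}_P^{2})\otimes\mathcal{O}_E\twoheadrightarrow N_{E/X}^{\vee}$, so $N_{E/X}^{\vee}$ is globally generated of rank $2$ with $c_1=1$ on $\mathbb{P}^2$; the only locally free such sheaves are $\mathcal{O}(1)\oplus\mathcal{O}$ and $T_{\mathbb{P}^2}(-1)=(\Omega^1_{\mathbb{P}^2}(1))^{\vee}$. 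A projection-formula computation (using $g_*\mathcal{O}_X=\mathcal{O}_Y$) shows the induced map $\mathfrak{m}_P/\mathfrak{m}_P^{2}\to H^0(N_{E/X}^{\vee})$ is injective; since on $\mathbb{P}^2$ the bundle $\mathcal{O}(1)\oplus\mathcal{O}$ cannot be generated by $\leqslant 3$ global sections (two linear forms always share a zero), that alternative would force $\dim_{\mathbb{C}}\mathfrak{m}_P/\mathfrak{m}_P^{2}\geqslant 4$, i.e. $Y$ singular at $P$. Thus everything comes down to excluding this case, equivalently to proving $Y$ smooth at $P$; this is the technical core and the main obstacle, and I expect it to require the finer local study of $g$ near the two-dimensional fibre — using $R^{>0}g_{*}\mathcal{O}_X=0$, $\mathbb{Q}$-factoriality of $Y$, and a computation with $g_{*}\mathcal{O}_X(mE)$ or with the symmetric powers of $N_{E/X}$. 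Granting $Y$ smooth at $P$ we get $\dim_{\mathbb{C}}\mathfrak{m}_P/\mathfrak{m}_P^{2}=3$, hence $N_{E/X}^{\vee}\cong T_{\mathbb{P}^2}(-1)$, that is $N_{E/X}\cong\Omega^1_{\mathbb{P}^2}(1)$, which completes the proof.
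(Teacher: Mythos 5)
This statement is not proved in the paper at all: it is quoted verbatim as Theorem~0.6 of Kachi's article \emph{Extremal contractions from $4$-dimensional manifolds to $3$-folds}, where its proof occupies a substantial part of that paper. So there is no internal argument to compare yours against; your proposal has to stand on its own as a proof of Kachi's theorem, and as such it has two genuine gaps. First, you apply a classification of smooth surfaces covered by ``lines'' to identify $E$ with $\mathbb{P}^2$, but a priori the $2$-dimensional fiber $E$ is only a closed subscheme of $X$: it need not be irreducible, reduced, normal, or smooth, and nothing in your sketch establishes any of these. Proving that the hypothesis $l_E(R)=2$ forces $E$ to be an irreducible reduced smooth surface (and that $g^{-1}(P)=E$ scheme-theoretically, which you need for the surjection $(\mathfrak{m}_P/\mathfrak{m}_P^2)\otimes\mathcal{O}_E\twoheadrightarrow N_{E/X}^{\vee}$) is a major portion of the actual work, and ``standard bend-and-break considerations'' do not dispose of it.

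Second, and more seriously, you explicitly defer the decisive step: after correctly reducing the two conclusions $N_{E/X}\cong\Omega^1_{\mathbb{P}^2}(1)$ and ``$Y$ smooth at $P$'' to a single dichotomy ($N_{E/X}^{\vee}\cong T_{\mathbb{P}^2}(-1)$ versus $\mathcal{O}\oplus\mathcal{O}(1)$), you write ``Granting $Y$ smooth at $P$\dots'' and name this the technical core and the main obstacle without overcoming it. The reduction itself is nice --- the observation that $\mathcal{O}_{\mathbb{P}^2}(1)\oplus\mathcal{O}_{\mathbb{P}^2}$ cannot be generated by three sections is correct and does tie the two conclusions together --- but it leaves the theorem unproved: you must independently establish either the smoothness of $Y$ at $P$ or the exclusion of the split normal bundle, and the circular structure of your last paragraph (each is to follow from the other) gives neither. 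The injectivity of $\mathfrak{m}_P/\mathfrak{m}_P^2\to H^0(N_{E/X}^{\vee})$ is also asserted rather than proved; it requires knowing $g_*\bigl(\mathfrak{m}_P^2\mathcal{O}_X\bigr)\cap\mathcal{O}_{Y,P}=\mathfrak{m}_P^2$, which is not a formal consequence of $g_*\mathcal{O}_X=\mathcal{O}_Y$. In short: the outline identifies the right objects and the right final dichotomy, but the proof of the theorem is missing exactly where you say it is.
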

\begin{lemma}\label{bdle}
Let X be a Fano fourfold with $(4,3)$-type contraction $\varphi: X\rightarrow Y$. 
We assume that $\displaystyle \Lambda ^2 \mathcal{T}_X$ is nef. Then $\varphi$ is $\mathbb{P}^1$-bundle.
\end{lemma}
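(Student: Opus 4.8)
The plan is to follow the pattern of the earlier subsections: first produce a conic bundle structure by excluding fibres of dimension $\geq 2$, and then upgrade the conic bundle to a $\mathbb{P}^1$-bundle by eliminating degenerate conics with Lemma \ref{2}. The first step is to show that $\varphi$ is equi-dimensional. A $(4,3)$-type contraction has one-dimensional general fibre and every fibre has dimension at most two, so it is enough to rule out two-dimensional fibres. Suppose $E$ is such a fibre. Then $\varphi$ contracts $E$ to a point, every curve contained in $E$ lies in the extremal ray $R$, and Theorem \ref{kachi} gives $l_E(R)\in\{1,2\}$. If $l_E(R)=1$, there is an irreducible rational curve $C\subset E$ with $-K_X.C=1$, contradicting Lemma \ref{2}. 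If $l_E(R)=2$, then $E\cong\mathbb{P}^2$ is a smooth subvariety of $X$ with $N_{E/X}\cong\Omega^1_{\mathbb{P}^2}(1)$; applying $\Lambda^2$ to the surjection $\mathcal{T}_X|_E\rightarrow N_{E/X}$ coming from the normal bundle sequence $0\rightarrow\mathcal{T}_E\rightarrow\mathcal{T}_X|_E\rightarrow N_{E/X}\rightarrow0$ yields a surjection $\Lambda^2\mathcal{T}_X|_E\rightarrow\det N_{E/X}\cong\mathcal{O}_{\mathbb{P}^2}(-1)$, and since $\mathcal{O}_{\mathbb{P}^2}(-1)$ is not nef, $\Lambda^2\mathcal{T}_X$ is not nef --- a contradiction. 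Hence every fibre of $\varphi$ is one-dimensional.

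Once $\varphi$ is known to be equi-dimensional, Theorem \ref{ando2} applies, since $\dim Y=3=\dim X-1$, and shows that $Y$ is smooth and that $\varphi$ is a conic bundle. To conclude that $\varphi$ is a $\mathbb{P}^1$-bundle I would rule out degenerate fibres: every fibre $F$ satisfies $-K_X.F=2$, because the fibre class is numerically constant and equals that of the general (smooth) conic, while a degenerate conic is either a union $\ell_1+\ell_2$ of two lines or a double line $2\ell$, which in both cases produces a rational curve of $-K_X$-degree $1$, contradicting Lemma \ref{2}. Therefore every fibre of $\varphi$ is a smooth conic, i.e. $\varphi$ is a $\mathbb{P}^1$-bundle.

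I expect the main obstacle to be the exclusion of two-dimensional fibres: this is precisely where Theorem \ref{kachi} is indispensable, as it is the only tool that pins down the geometry of such a fibre, and the length-$2$ case is the one that genuinely uses the hypothesis that $\Lambda^2\mathcal{T}_X$ is nef, through the computation $\det\bigl(\Omega^1_{\mathbb{P}^2}(1)\bigr)\cong\mathcal{O}_{\mathbb{P}^2}(-1)$. The remaining two steps are routine and mirror the conic-bundle discussion already carried out in the proof of Lemma \ref{blowup}.
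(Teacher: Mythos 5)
Your proposal is correct and follows essentially the same route as the paper: rule out two-dimensional fibres via Theorem \ref{kachi} and the surjection $\Lambda^2\mathcal{T}_X|_E\rightarrow\Lambda^2 N_{E/X}\cong\mathcal{O}_{\mathbb{P}^2}(-1)$, then apply Theorem \ref{ando2} and Lemma \ref{2} to pass from a conic bundle to a $\mathbb{P}^1$-bundle. You are in fact slightly more careful than the paper, which silently skips the $l_E(R)=1$ case that you dispose of with Lemma \ref{2}.
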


\begin{proof}
If $\varphi$ has a two dimensional fiber $F$, then by Theorem \ref{kachi} we have $F\cong\mathbb{P}^2$ and $N_{F/X}\cong\Omega^1_{\mathbb{P}^2}(1)$.
We get an exact sequence of vector bundles 
\[\displaystyle \mathcal{T}_X|_F\rightarrow N_{F/X}\cong\Omega^1_{\mathbb{P}^2}(1)\rightarrow 0.\]
Therefore we have the surjection 
\[\displaystyle \Lambda ^2 \mathcal{T}_X|_F\rightarrow  \Lambda ^2\Omega^1_{\mathbb{P}^2}(1)\cong\mathcal{O}_{\mathbb{P}^2}(-1)\rightarrow0.\]
This contradicts to the nefness of $\displaystyle \Lambda ^2 \mathcal{T}_X$.
Hence $\varphi$ is equidimensional. From Theorem \ref{ando2} and Lemma \ref{2}, we know that $\varphi$ is $\mathbb{P}^1$-bundle.
\end{proof}

\begin{theorem}
Let X be a Fano fourfold with only $(4,2)$-type and $(4,3)$-type contractions. We assume that $X$ has a $(4,2)$-type contraction and 
$\displaystyle \Lambda ^2 \mathcal{T}_X$ is nef. Then $\mathcal{T}_X$ is nef.
\end{theorem}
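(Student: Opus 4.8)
The plan is to argue by contradiction: assuming $\mathcal{T}_X$ is not nef, I will deduce that $X\cong\mathbb{P}^2\times\mathbb{P}^2$, whose tangent bundle is nef. First I would record the numerical consequences. Since $\dim X=4$ and $\mathcal{T}_X$ is not nef, Lemma \ref{2} gives $-K_X\cdot C\geqslant 3$ for every rational curve $C$ on $X$; in particular every extremal ray has length at least $3$. If $X$ had a $(4,3)$-type extremal contraction, Lemma \ref{bdle} would make it a $\mathbb{P}^1$-bundle, whose fibre is a rational curve of anticanonical degree $2$ — impossible. Hence, by the hypothesis that $X$ has only $(4,2)$- and $(4,3)$-type contractions, every extremal contraction of $X$ is of $(4,2)$-type. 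For such a contraction $f\colon X\to Y$ (so $\dim Y=2$) a general fibre $F$ is a smooth surface with $-K_F=-K_X|_F$ ample, i.e. a del Pezzo surface, on which every rational curve has anticanonical degree at least $3$; since every del Pezzo surface other than $\mathbb{P}^2$ carries a rational curve of anticanonical degree $\leqslant 2$ (a $(-1)$-curve, or a line of a ruling on $\mathbb{P}^1\times\mathbb{P}^1$), this forces $F\cong\mathbb{P}^2$.

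Next I would use $\rho(X)\geqslant 2$ to pick two distinct extremal contractions $f_i\colon X\to Y_i$, $i=1,2$, each of $(4,2)$-type with general fibre $\cong\mathbb{P}^2$ as above. A general fibre $F\cong\mathbb{P}^2$ of $f_1$ has all its curves in the ray contracted by $f_1$, hence none in the ray contracted by $f_2$; so $f_2|_F\colon F\to Y_2$ contracts no curve, hence is finite, and as $\dim F=\dim Y_2=2$ it is finite surjective. By Theorem \ref{lazarsfeld} this forces $Y_2\cong\mathbb{P}^2$, and symmetrically $Y_1\cong\mathbb{P}^2$. Since $\rho(Y_i)=1$ we get $\rho(X)=2$, so $f_1,f_2$ are the only extremal contractions of $X$, and the product morphism $\Phi:=(f_1,f_2)\colon X\to\mathbb{P}^2\times\mathbb{P}^2$ contracts no curve (a curve contracted by both $f_1$ and $f_2$ would be numerically trivial), hence is finite and surjective. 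The problem is thereby reduced to proving that $\Phi$ is an isomorphism.

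This last step is the main obstacle. Setting $H_i=f_i^{*}\mathcal{O}_{\mathbb{P}^2}(1)$ and taking $C_i$ to be a line in a general $\mathbb{P}^2$-fibre of $f_i$, one has $-K_X\cdot C_i=3$, $H_i\cdot C_i=0$, and $m:=H_2\cdot C_1>0$; restricting $\Phi$ to a general fibre of $f_1$ (resp.\ of $f_2$) shows $\deg\Phi=m^{2}$ and $H_1\cdot C_2=m$, whence $-K_X=\tfrac{3}{m}(H_1+H_2)$ and the ramification divisor of $\Phi$ is $R=\tfrac{3(m-1)}{m}\,\Phi^{*}\mathcal{O}_{\mathbb{P}^2\times\mathbb{P}^2}(1,1)$. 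If $m=1$ then $\Phi$ is finite and birational onto the smooth variety $\mathbb{P}^2\times\mathbb{P}^2$, hence an isomorphism, and we are done. If $m\geqslant 2$ then $R$ is ample, so $\Phi$ is genuinely ramified; here the plan is to exploit the rank drop of $d\Phi\colon\mathcal{T}_X\to\Phi^{*}\mathcal{T}_{\mathbb{P}^2\times\mathbb{P}^2}$ along $R$ — equivalently, the fact that $\Phi$ restricts on a general $f_1$-fibre to a ramified finite cover $\mathbb{P}^2\to\mathbb{P}^2$ — to locate a rational curve on $X$ of anticanonical degree less than $3$, contradicting the first step; alternatively one may invoke the classification of Fano fourfolds of large index, or rigidity of projective bundle structures, to force $m=1$ directly. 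In every case $X\cong\mathbb{P}^2\times\mathbb{P}^2$, so $\mathcal{T}_X$ is nef, contradicting our assumption; therefore $\mathcal{T}_X$ is nef.
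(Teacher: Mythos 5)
Your reduction to a finite surjective morphism $\Phi\colon X\to\mathbb{P}^2\times\mathbb{P}^2$ is a genuinely different route from the paper's (which uses H\"oring--Novelli to upgrade the $(4,2)$-contraction to a $\mathbb{P}^2$-bundle and then Occhetta's product criterion), but the argument is not complete: the case $\deg\Phi=m^2\geqslant 4$ is exactly where you stop and substitute a list of ``plans'' for a proof. Nothing you write rules out a genuinely ramified finite cover of $\mathbb{P}^2\times\mathbb{P}^2$; finite surjections $\mathbb{P}^2\to\mathbb{P}^2$ of any square degree exist, so Theorem \ref{lazarsfeld} alone cannot finish, and neither ``exploit the rank drop of $d\Phi$'' nor ``invoke classification/rigidity'' is actually carried out. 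This is the crux of the statement and it is missing. Note that once you know each $(4,2)$-contraction has general fibre $\mathbb{P}^2$ on whose lines $-K_X$ has degree $3$, the two families of lines are unsplit (a degeneration would have total degree $\geqslant 4$ by the first part of Lemma \ref{2}) covering families with numerically independent classes and $n(1)+n(2)=4$, so Theorem \ref{occhetta} --- already quoted in the paper --- gives $X\cong\mathbb{P}^2\times\mathbb{P}^2$ directly and makes the whole $\Phi$ construction unnecessary.

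A second problem is that your opening step overreads Lemma \ref{2}. Its proof only yields $-K_X\cdot C\geqslant n-1$ for curves $C$ on which $\nu^*\mathcal{T}_X$ actually has a negative summand; if $\mathcal{T}_X$ fails to be nef only elsewhere, a given rational curve may still have anticanonical degree $2$. You use the global reading twice: to exclude $(4,3)$-type contractions under the hypothesis that $\mathcal{T}_X$ is not nef (their $\mathbb{P}^1$-fibres have degree $2$, but by Lemma \ref{restriction} the restriction of $\mathcal{T}_X$ to such a fibre is $\mathcal{O}(2)\oplus\mathcal{O}^{3}$, which is nef, so no contradiction arises there), and to force the del Pezzo fibres of a $(4,2)$-contraction to be $\mathbb{P}^2$. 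The second use is repairable via Lemma \ref{p-bundle}, as the paper does: a degree-$2$ extremal rational curve would make the contraction a $\mathbb{P}^1$-bundle, contradicting $(4,2)$-type. The first is not, so the mixed case in which $X$ carries both a $(4,2)$- and a $(4,3)$-type contraction is never treated; the paper handles it by Theorem \ref{occhetta} (obtaining $\mathbb{P}^2\times\mathbb{P}^1\times\mathbb{P}^1$) and, when $\rho(X)=2$, by the Szurek--Wi\'sniewski list of rank-$3$ Fano bundles on $\mathbb{P}^2$ (Theorem \ref{wisniewski3}).
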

\begin{proof}
Let $f:X\rightarrow S$ be a $(4,2)$-type contraction.
Then $f$ is equi-dimensional and $S$ is smooth. 
By lemma \ref{2} and \ref{p-bundle} a general fiber of $f$ is isomorphic to $\mathbb{P}^2$ and $l(R)=3$ where $R$ is the extremal ray associated with $f$.
Due to Theorem \ref{horing}, $f$ is $\mathbb{P}^2$-bundle and $S$ is a del Pezzo surface by \cite{kollar1}.
Therefore, from Theorem \ref{bundle}, there is a rank 3 vector bundle $\mathcal{E}$ on $S$ such that $f$ is the natural projection 
$X\cong \mathbb{P}_S(\mathcal{E})\rightarrow S$.
Since S cannot have a birational type contraction, $S$ is isomorphic to $\mathbb{P}^2$ or $\mathbb{P}^1\times\mathbb{P}^1$.
If X has another $(4,2)$-type contraction then $X$ is isomorphic to $\mathbb{P}^2\times\mathbb{P}^2$ by Theorem \ref{occhetta}.
If X has two $(4,3)$-type contractions, $X$ is isomorphic to $\mathbb{P}^2\times\mathbb{P}^1\times\mathbb{P}^1$ by Theorem \ref{occhetta}. 
If X has only one $(4,3)$-type contraction, then we have $\rho(X)=2$.
Hence S is $\mathbb{P}^2$.
By the list of rank 3 Fano bundle on $\mathbb{P}^2$, Theorem \ref{wisniewski3}, such a case does not exist.
Therefore we have a proof.  
\end{proof}

\begin{theorem}[\cite{horing}, Theorem 1.3]\label{horing}
Let $X$ be a quasi-projective manifolds that admits an elementary contraction of fibre type $\varphi:X\rightarrow Y$ onto a normal variety $Y$ such that the general fibre has dimension $d$. Suppose that the contraction has length $l(R)=d+1$. If $\varphi$ is equidimensional, it is a projective bundle. 
\end{theorem}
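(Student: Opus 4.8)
The plan is to work relatively over $Y$: first to identify the general fibre of $\varphi$ with $\mathbb{P}^{d}$, then to prove that \emph{every} fibre is isomorphic to $\mathbb{P}^{d}$, and finally to reconstruct $X$ as a projective bundle over $Y$. \emph{Step 1 (the general fibre).} Let $F$ be a general fibre. By generic smoothness it is a smooth projective $d$-fold, and since $\varphi$ is a Fano--Mori contraction $-K_{X}$ is $\varphi$-ample, so $-K_{F}=(-K_{X})|_{F}$ is ample and $F$ is Fano. Every rational curve $C\subset F$ is contracted by $\varphi$, hence proportional to the extremal ray $R$, so $-K_{F}\cdot C=-K_{X}\cdot C\geqslant l(R)=d+1=\dim F+1$; thus the pseudoindex of $F$ equals $\dim F+1$, and by the Cho--Miyaoka--Shepherd-Barron characterisation of projective space $F\cong\mathbb{P}^{d}$.

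\emph{Step 2 (all fibres).} For an arbitrary fibre $F_{0}$ and an irreducible component $F_{0}'$ of it, the Ionescu--Wisniewski lower bound for fibres of an extremal contraction gives $\dim F_{0}'\geqslant l(R)-1=d$; together with the equidimensionality hypothesis (so $\dim F_{0}=d$) this forces every component of every fibre to have dimension exactly $d$, while $F_{0}$ is connected because $\varphi_{*}\mathcal{O}_{X}=\mathcal{O}_{Y}$. To upgrade this to $F_{0}\cong\mathbb{P}^{d}$ I would use the relative family of minimal rational curves of $R$: over the general point of $Y$ these are exactly the lines of the fibre $\mathbb{P}^{d}$, and a relative version of the deformation-of-rational-curves argument (in the spirit of Cho--Miyaoka--Shepherd-Barron and Kebekus) shows that through each point of $F_{0}$ there passes a $(d-1)$-dimensional family of such curves sweeping out all of $F_{0}$, whence $F_{0}^{\mathrm{red}}\cong\mathbb{P}^{d}$; and $F_{0}$ is reduced because it is Cohen--Macaulay and generically reduced. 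In particular every fibre of $\varphi$ is a smooth $\mathbb{P}^{d}$, so $\varphi$ is a smooth morphism.

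\emph{Step 3 (the bundle structure).} A fibre $F\cong\mathbb{P}^{d}$ of a morphism has trivial normal bundle $N_{F/X}\cong\mathcal{O}_{F}^{\oplus\dim Y}$, and $H^{1}(F,N_{F/X})=H^{1}(\mathbb{P}^{d},\mathcal{O})^{\oplus\dim Y}=0$, so the deformations of $F$ in $X$ are unobstructed of dimension $\dim Y$ and $Y$ is smooth. Because $\varphi$ has relative Picard number one and the minimal curves in the fibres are the lines of the $\mathbb{P}^{d}$'s, the image of $\mathrm{Pic}(X)\to\mathrm{Pic}(F)$ is all of $\mathbb{Z}$ (the usual description of the Picard group along an extremal contraction, cf.\ Theorem~\ref{ando1}), so there is a line bundle $L$ on $X$ with $L|_{F}\cong\mathcal{O}_{\mathbb{P}^{d}}(1)$ on every fibre. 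Then $\mathcal{E}:=\varphi_{*}L$ is locally free of rank $d+1$ by cohomology and base change, the natural map $\varphi^{*}\mathcal{E}\to L$ is surjective, and the induced $Y$-morphism $X\to\mathbb{P}_{Y}(\mathcal{E})$ restricts to the identity $\mathbb{P}^{d}\to\mathbb{P}^{d}$ on every fibre, hence is an isomorphism; so $\varphi$ is a projective bundle.

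The hard part is Step 2: equidimensionality by itself does not yield flatness --- at that stage the base is only known to be normal --- so the genuine work is to rule out degenerate or non-reduced fibres and to promote ``general fibre $\mathbb{P}^{d}$'' to ``every fibre $\mathbb{P}^{d}$'' by the deformation argument for the family of minimal rational curves over $Y$. Granting that, Step 1 is a direct invocation of the Cho--Miyaoka--Shepherd-Barron theorem and Step 3 is the routine construction of a projective bundle from a smooth $\mathbb{P}^{d}$-fibration that carries a relative $\mathcal{O}(1)$.
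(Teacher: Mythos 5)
This statement is quoted verbatim from H\"oring--Novelli \cite{horing}; the paper offers no proof of it, so there is no internal argument to compare yours with, and I can only judge the proposal as a proof of the cited result. Your architecture is the right one and is essentially the standard route: the general fibre is a Fano $d$-fold whose rational curves all have anticanonical degree $\geqslant l(R)=d+1$, hence is $\mathbb{P}^{d}$ by Cho--Miyaoka--Shepherd-Barron; then one tries to propagate this to every fibre; then one builds the bundle from a relative $\mathcal{O}(1)$. Steps 1 and 3 are routine once Step 2 is in place.

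But Step 2, which you correctly flag as the hard part, is precisely the mathematical content of the theorem, and you describe it rather than prove it. What is actually available at that point is this: the family $V$ of rational curves in $R$ of degree $d+1$ is unsplit (any degeneration would consist of curves of $R$, each of degree $\geqslant l(R)=d+1$, so would have total degree $\geqslant 2(d+1)$), hence for $x$ in a special fibre $F_{0}$ the locus of curves of $V$ through $x$ has dimension $\geqslant d$ and is contained in $F_{0}$, so it is a $d$-dimensional component of $F_{0}$. Passing from this to $F_{0}^{\mathrm{red}}\cong\mathbb{P}^{d}$ requires a characterization of $\mathbb{P}^{d}$ valid for the a priori singular, non-normal variety swept out by these curves, plus an argument that $F_{0}$ is irreducible; neither is automatic, and this is where the work of \cite{horing} actually lies. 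Two auxiliary claims are also unjustified as written: a special fibre over a singular point of $Y$ need not be Cohen--Macaulay (it is then not a local complete intersection in $X$, since $\mathfrak{m}_{y}$ needs more than $\dim Y$ generators), so ``CM and generically reduced, hence reduced'' is premature; and in Step 3 the triviality of $N_{F/X}$ already presupposes that $Y$ is smooth at $\varphi(F)$, which is what you intend to deduce from it --- the usual escape is to obtain smoothness of $Y$ first, from unobstructedness of $[F]$ in the Hilbert scheme and the identification of $Y$ with the component of the Hilbert scheme through $[F]$. So the proposal is a correct outline with the decisive step, the passage from the general fibre to all fibres, left unproved.
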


\begin{proposition}[\cite{watanabe}, Proposition 2.5]\label{bundle}
Let $f:X\rightarrow Y$ be a smooth morphism from an $n$-dimensional Fano manifold $X$. If every fiber of f is $\mathbb{P}^d$ and $Y$ is rational, then there exists a rank $(d+1)$ vector bundle $\mathcal{E}$ on $Y$ such that $X=\mathbb{P}_Y(\mathcal{E})$.
\end{proposition}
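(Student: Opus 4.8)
The plan is to recognize $f:X\to Y$ as a $\mathbb{P}^d$-bundle in the \'etale topology and then to show that the obstruction to its being the projectivization of a vector bundle is a Brauer class on $Y$ which vanishes because $Y$ is rational. First I would use the Fano hypothesis only to make $f$ proper: a Fano manifold is projective, so $f$ is a smooth \emph{proper} morphism every fibre of which is isomorphic to the rigid variety $\mathbb{P}^d$. Since $H^1(\mathbb{P}^d,\mathcal{T}_{\mathbb{P}^d})=0$ and the automorphism group scheme $\mathrm{Aut}(\mathbb{P}^d)=\mathrm{PGL}_{d+1}$ is smooth, such a morphism is an \'etale-locally trivial $\mathbb{P}^d$-bundle, classified by a class in $H^1_{\mathrm{et}}(Y,\mathrm{PGL}_{d+1})$. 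Equivalently, the relative Picard sheaf satisfies $R^1f_*\mathbb{G}_m\cong\mathbb{Z}$, generated fibrewise by the hyperplane class $\mathcal{O}_{\mathbb{P}^d}(1)$.

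Next I would reduce the conclusion to the existence of a single line bundle. If $L$ is a line bundle on $X$ with $L|_F\cong\mathcal{O}_{\mathbb{P}^d}(1)$ on every fibre $F$, then setting $\mathcal{E}:=f_*L$ and invoking cohomology and base change (each fibre has $h^0(\mathcal{O}(1))=d+1$ and no higher cohomology) makes $\mathcal{E}$ locally free of rank $d+1$, and the evaluation map identifies $X$ with $\mathbb{P}_Y(\mathcal{E})$ because on each fibre it is the tautological isomorphism $\mathbb{P}^d\xrightarrow{\sim}\mathbb{P}(H^0(\mathcal{O}(1)))$. The existence of such $L$ is governed by the low-degree exact sequence of the Leray spectral sequence for $\mathbb{G}_m$,
\[\mathrm{Pic}(X)\longrightarrow H^0(Y,R^1f_*\mathbb{G}_m)=\mathbb{Z}\xrightarrow{\ \delta\ }H^2_{\mathrm{et}}(Y,\mathbb{G}_m)=\mathrm{Br}(Y),\]
in which the fibrewise generator $1$ lifts to $\mathrm{Pic}(X)$ precisely when $\delta(1)=0$; here $\delta(1)$ is the Brauer class $\alpha$ of the bundle.

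Finally I would show $\mathrm{Br}(Y)=0$. For a smooth projective variety over $\mathbb{C}$ the Brauer group $\mathrm{Br}(Y)=H^2_{\mathrm{et}}(Y,\mathbb{G}_m)$ is a birational invariant, equal to the unramified Brauer group of the function field; since $Y$ is rational, $\mathbb{C}(Y)$ is purely transcendental over $\mathbb{C}$, whence $\mathrm{Br}(Y)=\mathrm{Br}(\mathbb{P}^{\dim Y})=0$. The same vanishing can be read off concretely from the description $\mathrm{Br}(Y)\cong(\mathbb{Q}/\mathbb{Z})^{\,b_2-\rho}\oplus H^3(Y,\mathbb{Z})_{\mathrm{tors}}$, using that $h^{2,0}(Y)=h^0(Y,\Omega^2_Y)$ is a birational invariant vanishing for $\mathbb{P}^n$ (forcing $b_2=\rho$) and that $H^3(Y,\mathbb{Z})_{\mathrm{tors}}$ is likewise a birational invariant vanishing for $\mathbb{P}^n$. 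Thus $\alpha=\delta(1)=0$, the line bundle $L$ exists, and $X\cong\mathbb{P}_Y(\mathcal{E})$. I expect the main obstacle to be the first step, namely justifying rigorously that a smooth proper morphism with all fibres $\mathbb{P}^d$ is \'etale-locally trivial with $R^1f_*\mathbb{G}_m\cong\mathbb{Z}$, and pinning down the lifting obstruction as precisely the Brauer class; once that framework is set up, the rationality of $Y$ does all the remaining work through the vanishing of $\mathrm{Br}(Y)$.
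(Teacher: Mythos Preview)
The paper does not supply its own proof of this proposition; it is quoted verbatim from \cite{watanabe} and used as a black box. So there is nothing in the present paper to compare your argument against.

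That said, your outline is the standard and correct proof. A smooth proper morphism whose fibres are all $\mathbb{P}^d$ is an \'etale $\mathbb{P}^d$-bundle (rigidity plus smoothness of $\mathrm{PGL}_{d+1}$), hence is classified by $H^1_{\mathrm{et}}(Y,\mathrm{PGL}_{d+1})$; the obstruction to lifting to $H^1_{\mathrm{et}}(Y,\mathrm{GL}_{d+1})$ is exactly the Brauer class in $H^2_{\mathrm{et}}(Y,\mathbb{G}_m)$, and for a smooth proper rational variety over $\mathbb{C}$ the Brauer group vanishes by birational invariance. Two small remarks. First, your sentence ``the Fano hypothesis only makes $f$ proper'' is slightly glib: what you actually use is that $X$ is projective (hence proper over $\mathbb{C}$) and $Y$ is a variety (hence separated), which together force $f$ to be proper; smoothness of $Y$ then follows from smoothness of $f$ and of $X$, and properness of $Y$ from surjectivity of $f$. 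Second, your concrete formula $\mathrm{Br}(Y)\cong(\mathbb{Q}/\mathbb{Z})^{\,b_2-\rho}\oplus H^3(Y,\mathbb{Z})_{\mathrm{tors}}$ is correct for smooth projective $Y$ over $\mathbb{C}$, and the birational invariance of $h^{2,0}$ and of $H^3(\,\cdot\,,\mathbb{Z})_{\mathrm{tors}}$ does give the vanishing you want; but you could equally well just cite the birational invariance of $\mathrm{Br}$ for smooth proper varieties and skip the Hodge-theoretic detour. The step you flag as the ``main obstacle'' is genuinely the only nontrivial input, and your sketch of it is accurate.
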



\begin{theorem}[\cite{occhetta}, Theorem1.1]\label{occhetta}
A smooth complex projective variety X of dimension n is isomorphic to a product of projective spaces $\mathbb{P}^{n(1)}\times\cdots\times\mathbb{P}^{n(k)}$
if and only if there exist k unsplit covering families of rational curves $V^1,\cdots,V^k$ of degree $n(1)+1,\cdots,n(k)+1$ with $\sum n(i)=n$
such that the numerical classes of $V^1,\cdots,V^k$ are linearly independent in $N_1(X)$.
\end{theorem}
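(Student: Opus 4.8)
If $X=\mathbb{P}^{n(1)}\times\cdots\times\mathbb{P}^{n(k)}$, let $V^i$ be the family of lines in the $i$-th factor, i.e.\ products of a line in $\mathbb{P}^{n(i)}$ with points in the remaining factors. Each $V^i$ is an unsplit covering family. Since $-K_X=\bigoplus_i p_i^*\mathcal{O}(n(i)+1)$, a line in the $j$-th factor meets only the $j$-th summand, so $-K_X\cdot V^i=n(i)+1$ and the degrees are as required. As $N_1(X)=\bigoplus_i N_1(\mathbb{P}^{n(i)})$ with $[V^i]$ generating the $i$-th summand, the classes $[V^1],\dots,[V^k]$ are linearly independent. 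This is the formal half, and I would dispatch it in a few lines.

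\textbf{Converse, dimension estimate.} Assume the families $V^i$ are given and write $d_i:=n(i)=-K_X\cdot V^i-1$. The plan begins with a chain-of-rational-curves count. For a general $x\in X$, bend-and-break applied to the unsplit family $V^i$ shows $\dim\mathrm{Locus}(V^i)_x\geqslant -K_X\cdot V^i-1=d_i$. Iterating with all families and using that the classes $[V^i]$ are linearly independent (so each family adds a genuinely new numerical direction), the standard chain estimate gives
\[
\dim\mathrm{ChLocus}(V^1,\dots,V^k)_x\;\geqslant\;\sum_{i=1}^k\bigl(-K_X\cdot V^i-1\bigr)\;=\;\sum_{i=1}^k d_i\;=\;n .
\]
Hence the chain locus is all of $X$: the variety is rationally connected with respect to the union of the families, and, crucially, \emph{every} inequality above is forced to be an equality.

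\textbf{Identifying the fibrations.} For each $i$ the unsplit family $V^i$ has an associated rationally connected quotient $\pi_i\colon X\dashrightarrow Y_i$, whose general fibre is $\mathrm{ChLocus}(V^i)_x$. Equality in the estimate forces $\pi_i$ to be an equidimensional morphism onto a smooth base, with smooth general fibre $F_i$ of dimension exactly $d_i$, covered by the restriction of $V^i$. By adjunction $-K_{F_i}=(-K_X)|_{F_i}$, so $F_i$ carries an unsplit covering family of rational curves of anticanonical degree $d_i+1=\dim F_i+1$; the Cho--Miyaoka--Shepherd-Barron/Kebekus characterization of projective space then yields $F_i\cong\mathbb{P}^{d_i}$, so $\pi_i$ is a $\mathbb{P}^{d_i}$-fibration. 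The equality $\sum d_i=n$ shows that at a general point the tangent spaces to the fibres of the various $\pi_i$ are in direct sum and span $T_xX$, i.e.\ the $k$ fibrations are mutually transverse.

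\textbf{Assembling the product (main obstacle).} I would finish by induction on $k$. Fix $\pi_k\colon X\to Y_k$, now a $\mathbb{P}^{d_k}$-bundle over the smooth base $Y_k$ of dimension $\sum_{i<k}d_i$. One checks that $V^1,\dots,V^{k-1}$ are horizontal for $\pi_k$ and push forward to unsplit covering families on $Y_k$ of the correct degrees and with independent classes, so by induction $Y_k\cong\prod_{i<k}\mathbb{P}^{n(i)}$. The delicate point---and where I expect the real difficulty to lie---is to upgrade the pointwise transversality of the remaining fibrations with the fibres of $\pi_k$ to a \emph{global} splitting, proving that the $\mathbb{P}^{d_k}$-bundle $X\to Y_k$ is trivial, i.e.\ $X\cong Y_k\times\mathbb{P}^{d_k}$. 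I would attempt this by exhibiting enough disjoint sections built from the transverse families $V^{i}$ $(i<k)$, or equivalently by showing the associated rank-$(d_k+1)$ bundle on the product $\prod_{i<k}\mathbb{P}^{n(i)}$ splits as a direct sum of line bundles. Controlling this non-triviality, rather than the local geometry of the fibres, is the crux; once settled, iterating the descent produces the isomorphism $X\cong\prod_i\mathbb{P}^{n(i)}$.
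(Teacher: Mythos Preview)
The paper does not contain a proof of this statement at all: Theorem~\ref{occhetta} is quoted verbatim from Occhetta's paper \cite{occhetta} and used as an external input in the classification (to recognize $\mathbb{P}^2\times\mathbb{P}^2$, $\mathbb{P}^2\times\mathbb{P}^1\times\mathbb{P}^1$, and $(\mathbb{P}^1)^4$). There is therefore nothing in the present paper to compare your argument against; any assessment must be against the original source.

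That said, your outline is broadly in the spirit of the actual proof in \cite{occhetta}, which also proceeds via dimension counts for loci of chains, the rationally connected fibration associated to an unsplit family, and a projective-space characterization of the fibres. Two points in your write-up are genuine gaps rather than routine details. First, the sentence ``Equality in the estimate forces $\pi_i$ to be an equidimensional morphism onto a smooth base'' is doing a lot of work: the rc-quotient is a priori only a rational map, and promoting it to an everywhere-defined equidimensional morphism with smooth target requires an argument (in Occhetta's paper this is handled via the structure of unsplit families and a careful analysis of the locus where the fibration could jump). Second, you correctly flag the triviality of the $\mathbb{P}^{d_k}$-bundle in the inductive step as ``the crux'', but you do not actually resolve it; the suggestion of producing disjoint sections from the transverse families is plausible but not carried out, and showing a vector bundle on a product of projective spaces splits from this kind of data is not automatic. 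So as a proof the proposal is incomplete at exactly the places you yourself identify as delicate.
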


\subsection{(4,3)-type}
In this subsection we consider the case where X has only $(4,3)$-type contraction.
\begin{theorem}
Let X be a Fano fourfold with only $(4,3)$-type contraction. 
If $\displaystyle \Lambda ^2 \mathcal{T}_X$ is nef, then $\mathcal{T}_X$ is nef.
\end{theorem}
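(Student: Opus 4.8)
The plan is to bypass the full classification of $X$ and reduce the statement directly to Lemma \ref{2}. Since $X$ is Fano it has an extremal contraction, and by assumption every such contraction is of $(4,3)$-type, so we have a $(4,3)$-type contraction $\varphi:X\to Y$. Because $\Lambda^2\mathcal{T}_X$ is nef, Lemma \ref{bdle} applies and shows that $\varphi$ is a $\mathbb{P}^1$-bundle over a smooth projective threefold $Y$. That is the only structural input needed.

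The next step is to exhibit a rational curve of small anticanonical degree. Let $F$ be a fiber of $\varphi$. As $\varphi$ is a smooth morphism, $N_{F/X}\cong\mathcal{O}_{\mathbb{P}^1}^{\oplus 3}$, so from the exact sequence $0\to\mathcal{T}_F\to\mathcal{T}_X|_F\to N_{F/X}\to 0$ we get $-K_X\cdot F=\deg\mathcal{T}_F+\deg N_{F/X}=2$ (and in fact $\mathcal{T}_X|_F\cong\mathcal{O}_{\mathbb{P}^1}(2)\oplus\mathcal{O}_{\mathbb{P}^1}^{\oplus 3}$, since the sequence splits). Now suppose for contradiction that $\mathcal{T}_X$ is not nef. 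Then Lemma \ref{2}, applied with $n=4$, forces $-K_X\cdot C\geqslant n-1=3$ for every rational curve $C$ in $X$; taking $C=F$ contradicts $-K_X\cdot F=2$. Hence $\mathcal{T}_X$ is nef.

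The hard part, such as it is, has already been disposed of in Lemma \ref{bdle}: that is where the nefness of $\Lambda^2\mathcal{T}_X$ is used to exclude a two-dimensional fiber of $\varphi$ (via Theorem \ref{kachi}) and to upgrade the resulting equidimensional fiber-type contraction to a genuine $\mathbb{P}^1$-bundle (via Theorem \ref{ando2}). Once that structure is available, the argument is just the elementary degree computation above: the fiber of a $(4,3)$-type contraction is a rational curve of anticanonical degree $2=n-2$, which sits strictly below the threshold $n-1$ imposed by Lemma \ref{2} in the non-nef case. In particular, unlike the earlier fiber-type subsections, here we do not need to identify $Y$ or to produce a projectivized-bundle presentation of $X$ in order to conclude.
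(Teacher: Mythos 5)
Your reduction fails at the application of Lemma \ref{2}, and the failure is fatal rather than cosmetic. The second clause of that lemma --- ``if $\mathcal{T}_X$ is not nef, then $-K_X.C\geqslant n-1$'' --- is proved by writing $\nu^*\mathcal{T}_X\cong\bigoplus\mathcal{O}_{\mathbb{P}^1}(a_i)$ on the normalization of $C$ and observing that $a_n<0$ forces $a_{n-1}\geqslant -a_n>0$; so the hypothesis actually being used is that $\mathcal{T}_X$ fails to be nef \emph{on the curve $C$ itself}. It is a statement about those rational curves witnessing the non-nefness, not a bound valid for every rational curve once $\mathcal{T}_X$ is globally non-nef. (This is also how the paper uses it in this very subsection: it first produces a specific curve $C$ with $\mathcal{T}_X|_C$ non-nef, and only then invokes the bound.) Your curve $F$ is a fiber of the $\mathbb{P}^1$-bundle, and you yourself compute $\mathcal{T}_X|_F\cong\mathcal{O}_{\mathbb{P}^1}(2)\oplus\mathcal{O}_{\mathbb{P}^1}^{\oplus 3}$, which is nef; so the ``furthermore'' clause simply does not apply to $F$, and no contradiction is obtained. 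A concrete check that your reading of the lemma cannot be right: $X=Bl_p\mathbb{P}^4$ has $\Lambda^2\mathcal{T}_X$ nef (Lemma \ref{blowup}), $\mathcal{T}_X$ not nef, and is a $\mathbb{P}^1$-bundle over $\mathbb{P}^3$ whose fibers satisfy $-K_X.F=5-3=2<3$.

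What remains after Lemma \ref{bdle} is therefore genuinely the hard part, and it is where the paper spends its effort: one must locate where non-nefness of $\mathcal{T}_X$ could occur, and that requires identifying the base and the bundle. The paper pushes $\Lambda^2\mathcal{T}_X$ down to conclude $\Lambda^2\mathcal{T}_Y$ is nef, splits into cases by $\rho(X)\in\{2,3,4\}$ using Theorem \ref{occhetta}, and in the crucial $\rho(X)=2$ case shows (by restricting to the Hirzebruch surface over a line $l\subset Y$ with $\mathcal{T}_Y|_l$ non-nef and producing a section $C$ with $-K_X.C=2$ and $\mathcal{T}_X|_C$ non-nef) that $Y$ itself must have nef tangent bundle, and then appeals to the classification of rank-$2$ Fano bundles on $\mathbb{P}^3$ and $Q_3$ to pin down $X$. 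None of this is bypassed by the fiber computation.
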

\begin{proof}
We note that, in this case, any $(4,3)$-type contraction is $\mathbb{P}^1$-bundle by Lemma \ref{bdle}.
If X has four $(4,3)$-type contractions then $X$ is isomorphic to $\mathbb{P}^1\times\mathbb{P}^1\times\mathbb{P}^1\times\mathbb{P}^1$ by Theorem \ref{occhetta} and lemma \ref{2}.

If $X$ has three $(4,3)$-type contractions then $\rho(X)=3$.
Let $\phi: X\rightarrow Z$ be a $\mathbb{P}^1$-bundle one of the above contractions. 
Then, from \cite{kollar1}, $Z$ is a smooth Fano threefold Z with $\rho(Z)=2$.
Moreover, by the surjection of vector bundles \[\displaystyle \Lambda ^2 \mathcal{T}_X\rightarrow \pi^*(\Lambda ^2 \mathcal{T}_Z)\rightarrow 0,\] we know that 
$\Lambda ^2 \mathcal{T}_Z$ is nef.
Due to Theorem \ref{3-dim} and \ref{3-nef}, $Z$ is rational. 
From Theorem \ref{bundle} there is a rank 2 vector bundle $\mathcal{E}$ on Z such that $X$ is isomorphic to $\pi:\mathbb{P}_Z(\mathcal{E})\rightarrow Z$.
Since X does not have a birational type contraction, we have $Z$ is either $\mathbb{P}^1\times\mathbb{P}^2$ or $\mathbb{P}_{\mathbb{P}^2}(\mathcal{T}_{\mathbb{P}^2})$
by theorem \ref{3-dim} and theorem \ref{3-nef}.
If Z is $\mathbb{P}^1\times\mathbb{P}^2$ $($ resp., $\mathbb{P}_{\mathbb{P}^2}(\mathcal{T}_{\mathbb{P}^2})$ $)$, we have $\mathcal{E}|_C\cong\mathcal{O}^2_{\mathbb{P}^1}(a)$ 
for every fiber $C\cong\mathbb{P}^1$ of natural projection $\mathbb{P}^1\times\mathbb{P}^2\rightarrow \mathbb{P}^2$ $($ resp., $\mathbb{P}_{\mathbb{P}^2}(\mathcal{T}_{\mathbb{P}^2})\rightarrow \mathbb{P}^2$ $)$ since 
\[2=-K_Z.C=-K_X.\tilde{C}+(a_2-a_1)\geqslant 2+(a_2-a_1)\]
 where $\tilde{C}$ is an extremal rational curve on X associated with C and $\mathcal{E}|_C\cong\mathcal{O}_{\mathbb{P}^1}(a_1)\oplus\mathcal{O}_{\mathbb{P}^1}(a_2)$ $($ $a_1\leqslant a_2$ $)$. 
 Therefore there is a rank 2 Fano bundle $\mathcal{E}'$ on $\mathbb{P}^2$ such that $\mathcal{E}$ is the pullback of $\mathcal{E}'$ by the natural projection $\pi:Z\rightarrow\mathbb{P}^2$ up to twist by a line bundle.   
$X$ is isomorphic to $\mathbb{P}^1\times\mathbb{P}_{\mathbb{P}^2}(\mathcal{T}_{\mathbb{P}^2})$ by using the list of Fano bundles in Theorem \ref{wisniewski2}.

If X has two $(4,3)$-type contraction then $\rho(X)=2$.
Let $\phi : X\rightarrow Y$ be a $\mathbb{P}^1$-bundle which is a one of above contractions.
From the surjection $\Lambda^2\mathcal{T}_X\rightarrow \phi^*(\Lambda^2\mathcal{T}_Y)\rightarrow 0$, we know that $Y$ is a Fano threefold of $\rho(Y)=1$ such that 
$\Lambda^2\mathcal{T}_Y$ is nef.
If $\mathcal{T}_Y$ is not nef, then there is a line $l$ in $Y$ such that $\mathcal{T}_Y$ is not nef on $l$ by the argument in the proof of \cite{peternell1} Theorem 5.1.
We know that $-K_Y.l=2$ by Theorem \ref{3-dim}.
Therefore we can write $\mathcal{T}_Y|_l\cong \mathcal{O}_l(2)\oplus\mathcal{O}_l(k)\oplus\mathcal{O}_l(-k)$ for some positive integer $k$. 
Let $F$ be the preimage of $l$ by $\phi$. 
Then, F is isomorphic to a Hirzebruch surface $\pi: \mathbb{P}_{l}(\mathcal{O}_l(a)\oplus\mathcal{O}_l(b))\rightarrow l$ for some integer $a$, $b$ such that $a\geqslant b$.
 Put $C$ a section of $\pi$ which is associated with $\mathcal{O}_l(b)$.
 Then, we have $2=-K_Y.l=-K_X.C+(a-b)\geqslant 2+0=2$.
 Therefore we get $-K_X.C=2$ and $a=b$.
 On the other hand, we have the exact sequence 
 \[0\rightarrow \mathcal{T}_{\phi}\rightarrow \mathcal{T}_X\rightarrow \phi^*\mathcal{T}_{Y}\rightarrow 0. \]
 Restrict to C, we have 
 \[0\rightarrow \mathcal{O}_C\rightarrow \mathcal{T}_X|_C\rightarrow \mathcal{O}_C(2)\oplus\mathcal{O}_C(k)\oplus\mathcal{O}_C(-k)\rightarrow 0. \]
 Hence $\mathcal{T}_X$ is not nef on $C$.
 If $\Lambda^2\mathcal{T}_X$ is nef, we have $-K_X.C\geqslant3$ by Lemma \ref{2}.
 It is a contradiction.
 Therefore we see that $\mathcal{T}_Y$ is nef, that is, $Y$ is isomorphic to the $\mathbb{P}^3$ or a three dimensional smooth quadric hypersurface.
  Moreover, $\phi$ is a projectivization of rank 2 vector bundle on $Y$ by Theorem \ref{bundle}.
  By the classification of raank 2 Fano bundle $($see Theorems \ref{wisniewski4} and \ref{wisniewski5}$)$, we know that $Y$ is $\mathbb{P}^3$ and 
  such a bundle is the null correlation bundle.
  In particular, $\mathcal{T}_X$ itself is nef.
\end{proof}

\section{Varieties with nef tangent bundle}
In this section we list results about varieties whose tangent bundles are nef.
 \subsection{Surfaces with $\mathcal{T}_X$ nef}
\begin{theorem}[\cite{peternell1}, Theorem3.1] 
Let X be a smooth projective surface and assume $\mathcal{T}_X$ to be nef.
Then $X$ is minimal and exactly one of the surfaces in the following list.
\begin{enumerate}
\item $X=\mathbb{P}^2$.
\item $X=\mathbb{P}^1\times\mathbb{P}^1$.
\item $X=\mathbb{P}(\mathcal{E})$, $\mathcal{E}$ a semistable rank 2-vector bundle on an elliptic curve $C$.
\item $X$ is abelian surface. 
\item $X$ is hyperelliptic.
\end{enumerate} 
\end{theorem}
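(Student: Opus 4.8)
The plan is to run the Enriques--Kodaira classification of surfaces, split according to Kodaira dimension. Nefness of $\mathcal{T}_X$ makes $-K_X=\det\mathcal{T}_X$ nef, so $K_X\cdot C\le 0$ for every curve $C$ and $K_X^2\ge 0$; this excludes $\kappa(X)=1,2$, leaving $\kappa(X)\in\{-\infty,0\}$. Moreover $X$ has no $(-1)$-curve: for $E\cong\mathbb{P}^1$ with $N_{E/X}\cong\mathcal{O}_{\mathbb{P}^1}(-1)$ the conormal sequence $0\to\mathcal{T}_E\to\mathcal{T}_X|_E\to N_{E/X}\to 0$ would realize a non-nef line bundle as a quotient of the nef bundle $\mathcal{T}_X|_E$, which is impossible. (Throughout I use that a quotient of a nef bundle is nef and that nefness is preserved under restriction to subvarieties.) Hence $X$ is minimal. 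If $\kappa(X)=0$, I would apply the main theorem of Section~1 with $r=1$: some \'etale cover of $X$ is an abelian surface. Among minimal surfaces of Kodaira dimension $0$ -- abelian, K3, Enriques, hyperelliptic -- only the abelian and the hyperelliptic ones admit an abelian \'etale cover (K3 and Enriques surfaces are covered only by K3 surfaces), and both of these have \'etale-locally trivial, hence nef, tangent bundle; this gives cases (4) and (5).

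It remains to treat $\kappa(X)=-\infty$, so $X$ is birationally ruled. Being minimal, $X$ is either $\mathbb{P}^2$ -- case (1), since $\mathcal{T}_{\mathbb{P}^2}$ is ample -- or a geometrically ruled surface $\pi:X\cong\mathbb{P}(\mathcal{E})\to C$ over a smooth curve $C$. From the relative tangent sequence $0\to\mathcal{T}_{X/C}\to\mathcal{T}_X\to\pi^*\mathcal{T}_C\to 0$, the quotient $\pi^*\mathcal{T}_C$ is nef, and restricting it to a section of $\pi$ shows $\mathcal{T}_C$ is nef, so $2-2g(C)=\deg\mathcal{T}_C\ge 0$ and $g(C)\le 1$. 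If $g(C)=0$ then $X=\mathbb{F}_n$ for some $n\ge 0$; minimality rules out $n=1$, and for $n\ge 2$ the section $C_0$ with $C_0^2=-n$ again produces the non-nef quotient $N_{C_0/X}\cong\mathcal{O}_{\mathbb{P}^1}(-n)$ of $\mathcal{T}_X|_{C_0}$; hence $X=\mathbb{F}_0=\mathbb{P}^1\times\mathbb{P}^1$, which is case (2).

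The decisive case is $g(C)=1$. If $\mathcal{E}$ is not semistable, choose a destabilizing sub-line-bundle $\mathcal{L}\subset\mathcal{E}$; the corresponding section $C_0\cong C$ has normal bundle $N_{C_0/X}\cong(\mathcal{E}/\mathcal{L})\otimes\mathcal{L}^{\vee}$, of negative degree $\deg(\mathcal{E}/\mathcal{L})-\deg\mathcal{L}<0$, and the same argument as above shows $\mathcal{T}_X$ is not nef along $C_0$. Therefore $\mathcal{E}$ is semistable, giving case (3). For the converse I would appeal to Miyaoka's criterion: on a smooth curve, $\mathcal{E}$ is semistable if and only if the $\mathbb{Q}$-divisor $\mathcal{O}_{\mathbb{P}(\mathcal{E})}(1)-\frac{1}{2}\pi^*\det\mathcal{E}$ is nef. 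Granting this, $\mathcal{T}_{X/C}=\omega_{X/C}^{-1}=\mathcal{O}_{\mathbb{P}(\mathcal{E})}(2)\otimes\pi^*(\det\mathcal{E})^{\vee}=2\bigl(\mathcal{O}_{\mathbb{P}(\mathcal{E})}(1)-\frac{1}{2}\pi^*\det\mathcal{E}\bigr)$ is nef, and $\mathcal{T}_C\cong\mathcal{O}_C$ is nef because $C$ is elliptic; since an extension of nef bundles is nef, the relative tangent sequence shows $\mathcal{T}_X$ is nef. I expect this converse -- obtaining nefness of $\mathcal{T}_X$ uniformly for all semistable $\mathcal{E}$ on the elliptic base from the Miyaoka nefness threshold -- to be the one step needing genuine care; every other part of the argument reduces to the classification of minimal surfaces together with the elementary fact that a non-nef quotient along a curve obstructs nefness of the tangent bundle.
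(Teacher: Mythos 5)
The paper states this result purely as a citation of Campana--Peternell (\cite{peternell1}, Theorem~3.1) and gives no proof of its own, so there is nothing internal to compare against; your argument is a correct reconstruction along essentially the same lines as the original, namely the Enriques--Kodaira classification combined with the observation that a curve $C\subset X$ carrying a non-nef quotient of $\mathcal{T}_X|_C$ (a $(-1)$-curve, a negative section of a ruled surface, the destabilizing section when $\mathcal{E}$ is unstable) obstructs nefness. The two places needing outside input --- the $\kappa=0$ case via numerical flatness and \'etale abelian covers (equivalently, $c_2=0$ rules out K3 and Enriques), and the equivalence of semistability of $\mathcal{E}$ with nefness of $\mathcal{T}_{X/C}=\mathcal{O}_{\mathbb{P}(\mathcal{E})}(2)\otimes\pi^*\det\mathcal{E}^{\vee}$ over an elliptic base --- are both handled by legitimate appeals to known results (the latter could also be checked directly from Atiyah's classification of bundles on elliptic curves), so I see no gap.
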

 \subsection{3-folds with $\mathcal{T}_X$ nef}
\begin{theorem}[\cite{peternell1}, Theorem 6.1 and Theorem10.1] \label{3-nef}
Let X be a smooth projective threefold and assume $\mathcal{T}_X$ to be nef.
Then some \'etale covering $\tilde{X}$ of $X$ belong to the following list.
\begin{enumerate}
\item $\mathbb{P}^3$.
\item 3-dimensional smooth quadric $Q_3$.
\item $\mathbb{P}^1\times\mathbb{P}^2$.
\item $\mathbb{P}(\mathcal{T}_{\mathbb{P}^2})$.
\item $\mathbb{P}^1\times\mathbb{P}^1\times\mathbb{P}^1$.
\item $\tilde{X}=\mathbb{P}(\mathcal{E})$ for a flat rank 3-vector bundle on an elliptic curve $C$.
\item $X=\mathbb{P}(\mathcal{F})\times_C\mathbb{P}(\mathcal{F}')$ for flat rank 2-vector bundles $\mathcal{F}$ and $\mathcal{F}'$ over an elliptic curve $C$.
\item $\tilde{X}=\mathbb{P}(\mathcal{E})$ for a flat rank 2-vector bundle $\mathcal{E}$ on an abelian surface.
\item $\tilde{X}$ is an abelian threefold. 
\end{enumerate} 
\end{theorem}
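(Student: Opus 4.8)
The plan is to reconstruct this classification (due to Campana and Peternell, \cite{peternell1}) by combining the structure theory of manifolds with nef tangent bundle with three-dimensional Mori theory. Since $-K_X=\det\mathcal{T}_X$ is nef we have $\kappa(X)\leqslant 0$; if $\kappa(X)=0$ then by the classification in Kodaira dimension zero (the main theorem of Section~1 above, resting on \cite{peternell1} Theorem~2.3 in the case $r=1$) some \'etale cover of $X$ is an abelian threefold, which is case~(9). So from now on assume $\kappa(X)=-\infty$.

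The next step is to analyse the Albanese morphism $\alpha\colon X\to A:=\mathrm{Alb}(X)$. The key input, proved in \cite{peternell1}, is that when $\mathcal{T}_X$ is nef, $\alpha$ is a smooth surjective fibration which is \'etale-locally trivial and whose fibres $F$ are Fano manifolds with $\mathcal{T}_F$ nef; in particular $\dim F=3-\dim A$. If $\dim A=3$ then $X=A$ is abelian, contradicting $\kappa(X)=-\infty$. If $\dim A=2$ then $F\cong\mathbb{P}^1$, so $X=\mathbb{P}(\mathcal{E})$ for a rank-$2$ bundle $\mathcal{E}$ on the abelian surface $A$, and feeding nefness of $\mathcal{T}_X$ through the relative Euler sequence forces $\mathcal{E}$ to be numerically flat, hence flat after an \'etale cover --- this is case~(8). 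If $\dim A=1$ then $A$ is an elliptic curve and $F$ is a Fano surface with $\mathcal{T}_F$ nef, hence $F\cong\mathbb{P}^2$ or $\mathbb{P}^1\times\mathbb{P}^1$ by the classification of surfaces with nef tangent bundle; in the first case $X\to A$ is a $\mathbb{P}^2$-bundle $\mathbb{P}(\mathcal{E})$ with $\mathcal{E}$ of rank $3$ and flat after twist and \'etale cover (case~(6)), while in the second, using that $\mathrm{Aut}^{0}(\mathbb{P}^1\times\mathbb{P}^1)=\mathrm{PGL}_2\times\mathrm{PGL}_2$, the bundle splits as $\mathbb{P}(\mathcal{F})\times_A\mathbb{P}(\mathcal{F}')$ with $\mathcal{F},\mathcal{F}'$ flat (case~(7)). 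There remains the case $\dim A=0$, that is, $X$ Fano.

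For the Fano case I would run the cone and contraction theorem. First one shows that no extremal contraction of $X$ is birational: the exceptional divisor $E$ of a divisorial contraction carries, via $0\to\mathcal{T}_E\to\mathcal{T}_X|_E\to N_{E/X}\to0$, a surjection $\mathcal{T}_X|_E\to N_{E/X}$, and in every case of Mori's classification $N_{E/X}$ fails to be nef (it restricts to $\mathcal{O}(-1)$ or $\mathcal{O}(-2)$ on a suitable rational curve), contradicting nefness of $\mathcal{T}_X$; moreover a smooth threefold has no small extremal contractions. If $\rho(X)=1$, a rational-curve argument --- the normalization $\nu\colon\mathbb{P}^1\to C$ embeds $\mathcal{O}_{\mathbb{P}^1}(2)\hookrightarrow\nu^*\mathcal{T}_X$, so $-K_X\cdot C\geqslant 2$ for every rational curve (Lemma~\ref{2}), and if equality held one would obtain, as in Lemma~\ref{p-bundle}, a $\mathbb{P}^1$-bundle structure and hence $\rho(X)\geqslant 2$ --- forces the index of $X$ to be $\geqslant 3$, so $X\cong\mathbb{P}^3$ or $Q_3$ by Kobayashi--Ochiai (cases~(1),(2)). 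If $\rho(X)\geqslant 2$, every (necessarily fibre-type) extremal contraction is a conic bundle over a surface or a del Pezzo fibration over $\mathbb{P}^1$; nefness of $\mathcal{T}_X$ rules out singular conic fibres and del Pezzo fibres other than $\mathbb{P}^2$ and $\mathbb{P}^1\times\mathbb{P}^1$, so the contractions are $\mathbb{P}^1$-bundles over $\mathbb{P}^2$ or $\mathbb{P}^1\times\mathbb{P}^1$, $\mathbb{P}^2$-bundles over $\mathbb{P}^1$, or $(\mathbb{P}^1\times\mathbb{P}^1)$-bundles over $\mathbb{P}^1$, the base again carrying a nef tangent bundle. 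Restricting $\mathcal{T}_X$ to fibres of the various contractions and to lines in the base pins down each bundle (it must be flat after twist), and matching up the several extremal rays leaves precisely $\mathbb{P}^1\times\mathbb{P}^2$, $\mathbb{P}(\mathcal{T}_{\mathbb{P}^2})$ and $\mathbb{P}^1\times\mathbb{P}^1\times\mathbb{P}^1$ (cases~(3),(4),(5)).

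The hardest part is the Fano case with $\rho(X)\geqslant 2$, which is essentially the Campana--Peternell conjecture in dimension three: it requires the full structure theory of three-dimensional extremal contractions, the non-existence of bad fibres under the nefness hypothesis, and a somewhat delicate bookkeeping of the admissible projective-bundle structures together with their mutual compatibility across the several extremal rays. The index bound $-K_X\cdot C\geqslant 3$ in the $\rho(X)=1$ subcase is also not purely formal, as it entails excluding all Fano threefolds of index $1$ and $2$ (for instance by exhibiting on each of them a rational curve of anticanonical degree $\leqslant 2$).
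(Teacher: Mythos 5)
The paper does not prove this statement: Theorem \ref{3-nef} is quoted verbatim from \cite{peternell1} (Theorems 6.1 and 10.1) in the background section on varieties with nef tangent bundle, so there is no internal proof to compare yours against. Your outline does reproduce the architecture of the original Campana--Peternell argument: split off $\kappa(X)=0$ (abelian up to \'etale cover), use the Albanese map to peel off cases (6)--(9) according to $\dim\mathrm{Alb}(X)$, and treat the remaining Fano case by Mori theory, first excluding birational extremal contractions via the non-nef normal bundle of the exceptional divisor and then classifying the fibre-type contractions.

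Read as a proof rather than a road map, however, the proposal leaves the genuinely hard steps unexecuted, as you partly acknowledge. Three points in particular. First, the ``key input'' that the Albanese map of a threefold with nef tangent bundle is a smooth, \'etale-locally trivial fibration with Fano fibres is itself one of the main structure theorems of \cite{peternell1}; citing it is legitimate, but it is not a formal consequence of nefness and carries much of the weight of cases (6)--(8). Second, in the $\rho(X)=1$ case the passage from ``every rational curve satisfies $-K_X\cdot C\geqslant 3$'' to ``the index of $X$ is $\geqslant 3$'' is a real gap: one must exhibit a rational curve of anticanonical degree $\leqslant 2$ on every Fano threefold of index $1$ or $2$ (lines on del Pezzo threefolds, lines or conics in the index-one case), which is classically extracted from the Iskovskikh classification rather than produced abstractly. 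Third, in the $\rho(X)\geqslant 2$ case you assert without mechanism that del Pezzo fibres other than $\mathbb{P}^2$ and $\mathbb{P}^1\times\mathbb{P}^1$ are excluded; note that this does not follow from restricting $\mathcal{T}_X$ to the fibre $S$ alone (since $\mathcal{T}_S$ is a subsheaf, not a quotient, of $\mathcal{T}_X|_S$), but it does follow from adjunction, $-K_X\cdot\ell=-K_S\cdot\ell=1$ for a line or $(-1)$-curve $\ell\subset S$, together with Lemma \ref{2}. None of these is a wrong turn, but each must be filled in before the sketch becomes a proof of the quoted classification.
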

 \subsection{4-folds with $\mathcal{T}_X$ nef}
\begin{theorem}[\cite{peternell3}, Theorem 3.1, \cite{mok} Main Theorem and \cite{hwang} Theorem 4.2] 
Let X be a smooth 4-fold with $\mathcal{T}_X$ nef.
Then some \'etale covering $\tilde{X}$ of $X$ belong to the following list.
\begin{enumerate}
\item $\mathbb{P}^4$. 
\item 4-dimensional smooth quadric $Q_4$.
\item $\mathbb{P}^3\times\mathbb{P}^1$. 
\item $Q_3\times\mathbb{P}^1$.
\item $\mathbb{P}^2\times\mathbb{P}^2$.
\item $\mathbb{P}^1\times\mathbb{P}^1\times\mathbb{P}^2$.
\item $\mathbb{P}(\mathcal{T}_{\mathbb{P}^2})\times\mathbb{P}^1$.
\item $\mathbb{P}^1\times\mathbb{P}^1\times\mathbb{P}^1\times\mathbb{P}^1$.
\item $\mathbb{P}(\mathcal{E})$ with a null correlation bundle $\mathcal{E}$ on $\mathbb{P}^3$.
\item a $Q_3$- or a flat $\mathbb{P}^3$-bundle over an elliptic curve $C$.
\item a flat $\mathbb{P}^2$-bundle over a ruled surface Y over an elliptic curve with $\mathcal{T}_Y$ nef.
\item a flat $\mathbb{P}^1$-bundle over a flat $\mathbb{P}^2$- or $\mathbb{P}^1\times\mathbb{P}^1$-bundle over an elliptic curve with $\mathcal{T}_Y$ nef.
\item $\tilde{X}=\mathbb{P}(\mathcal{E})$ for a flat rank 3-vector bundle $\mathcal{E}$ on an abelian surface.
\item $X=\mathbb{P}(\mathcal{F})\times_A\mathbb{P}(\mathcal{F}')$ for flat rank 2-vector bundles $\mathcal{F}$ and $\mathcal{F}'$ over an abelian surface $A$.
\item $\tilde{X}=\mathbb{P}(\mathcal{E})$ for a flat rank 2-vector bundle $\mathcal{E}$ on an abelian 3-fold.
\item abelian 4-fold.
\end{enumerate} 
\end{theorem}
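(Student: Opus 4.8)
The plan is to first peel off the ``abelian part'' and reduce to Fano manifolds, then run the Minimal Model Program in the Fano case, exploiting the fact that nefness of the whole tangent bundle makes every extremal contraction as tame as possible; the genuinely deep input will be needed only for Picard number one.

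\emph{Step 1: reduction to the Fano case via the Albanese map.} If $\mathcal{T}_X$ is nef then $-K_X$ is nef, and by \cite{peternell1} the Albanese morphism $\alpha\colon X\to\mathrm{Alb}(X)$ is a smooth surjective fibration whose fibre $F$ is a Fano manifold with $\mathcal{T}_F$ nef; moreover, after an \'etale cover $\tilde X\to X$ one may assume that $\tilde\alpha$ is a locally trivial $F$-bundle over the abelian variety $\mathrm{Alb}(\tilde X)$ in the \'etale topology. Hence it suffices to (a) classify Fano $m$-folds with nef tangent bundle for $m\le 4$, and (b) determine which of the resulting $F$-bundles over abelian varieties actually occur. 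Part (b) is governed by flatness of the associated projective bundles and produces precisely the entries of the list that are flat $\mathbb{P}^d$-bundles over elliptic curves or abelian surfaces, the fibre products $\mathbb{P}(\mathcal F)\times_A\mathbb{P}(\mathcal F')$, and the abelian fourfold itself (the $\kappa(X)=0$ case, which is settled exactly as in Section 1 of this paper).

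\emph{Step 2: induction on $\dim X+\rho(X)$ when $\rho\ge 2$.} Next, given a Fano $X$ with $\mathcal{T}_X$ nef and $\rho(X)\ge 2$, I would pick an extremal contraction $\varphi\colon X\to Y$. Since $\mathcal{T}_X|_C$ is nef for every rational curve $C$, no fibre of $\varphi$ can jump in dimension, $\varphi$ is a smooth morphism onto a smooth base $Y$ with $\mathcal{T}_Y$ nef, and the fibres are lower-dimensional Fano manifolds with nef tangent bundle --- this is the clean analogue, for the whole of $\mathcal{T}_X$, of Lemmas \ref{blowup}, \ref{p-bundle} and \ref{bdle}. Feeding in the surface and threefold lists recorded above (in particular Theorem \ref{3-nef}) together with the classifications of Fano projective bundles over $\mathbb{P}^1$, $\mathbb{P}^2$, $\mathbb{P}^1\times\mathbb{P}^1$, $\mathbb{P}^3$ and $Q_3$ used throughout Section 2 --- which typically force the defining bundle to be a twist of a trivial bundle, a homogeneous bundle, or a null correlation bundle --- one recovers every entry with $\rho\ge 2$ case by case; the product entries $\mathbb{P}^{n(1)}\times\cdots\times\mathbb{P}^{n(k)}$ fall out of Theorem \ref{occhetta} as soon as one exhibits enough numerically independent unsplit covering families.

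\emph{Step 3: the Picard number one case, and the main obstacle.} When $\rho(X)=1$ there is no contraction to induct on, and one must prove directly that a Fano fourfold of Picard number one with nef tangent bundle is rational homogeneous, hence isomorphic to $\mathbb{P}^4$ or to the smooth quadric $Q_4$ (the only such spaces in dimension four with $\rho=1$). This is where the transcendental and differential-geometric machinery has to enter: one studies a minimal dominating family of rational curves and its variety of minimal rational tangents at a general point, shows that nefness of $\mathcal{T}_X$ forces this projective subvariety and the associated distribution to be rigid, and then invokes the Cartan--Fubini type extension and holonomy arguments of Mok and Hwang (the cited Main Theorem of \cite{mok} and Theorem 4.2 of \cite{hwang}; see also the Kähler--Einstein input of \cite{yau}) to identify $X$ with a homogeneous space. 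I expect essentially all the real difficulty, and all the ingredients beyond Mori theory, to be concentrated in this step; by comparison Steps 1 and 2 are organised bookkeeping resting on the lower-dimensional classifications already stated above.
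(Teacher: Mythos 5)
The paper does not prove this statement at all: it sits in Section 3, which the author introduces with ``In this section we list results about varieties whose tangent bundles are nef,'' and the theorem is imported wholesale from \cite{peternell3}, \cite{mok} and \cite{hwang}. So there is no internal proof to compare yours against; the honest answer to the exercise would have been to observe that this is a quoted classification, used as a black box. Judged on its own terms, your three-step outline is a fair reconstruction of how the classification is actually distributed across those three references --- Albanese reduction to the Fano case, smoothness of extremal contractions plus induction for $\rho\geqslant 2$, and VMRT/Cartan--Fubini rigidity for $\rho=1$ --- but every load-bearing step of your argument is itself one of the theorems being cited rather than something you have established.

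Concretely: (i) the claim in your Step 1 that after an \'etale cover the Albanese map becomes a smooth, locally trivial Fano fibration is the Campana--Peternell/Demailly--Peternell--Schneider structure theorem, not a consequence of anything proved in this paper (Section 1 here only handles $\kappa(X)=0$, and does so by reducing to \cite{peternell1}); (ii) the assertion in Step 2 that nefness of $\mathcal{T}_X$ prevents fibre-dimension jumping and forces every extremal contraction to be a smooth morphism onto a smooth base is precisely the technical core of \cite{peternell3} in dimension four --- the lemmas of the present paper (Lemmas \ref{blowup}, \ref{p-bundle}, \ref{bdle}) work under the weaker hypothesis that $\Lambda^2\mathcal{T}_X$ is nef and only control contractions with at most one-dimensional fibres, so they do not deliver what you need; and (iii) Step 3 is in its entirety an appeal to the Main Theorem of \cite{mok} and Theorem 4.2 of \cite{hwang}, i.e.\ to the resolution of the Campana--Peternell conjecture in dimension four with $\rho=1$. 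Your proposal is therefore a correct roadmap of the literature, and it correctly identifies where the depth lies, but it is not a proof: it reproduces, at the level of strategy, exactly the citation the paper already makes.
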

\section{Fano bundle}
In this section we collect results about Fano bundles (i.e. vector bundles whose projectivization are Fano) used
in the proof of main theorem. 
\begin{theorem}[\cite{wisniewski2}, Theorem]\label{wisniewski2}
 $\mathcal{E}$ be a rank 2 Fano bundle on $\mathbb{P}^2$. 
Then $\mathcal{E}$ is isomorphic to one of the following up to twist by some line bundle:
 \begin{enumerate}
\item $\mathcal{O}_{\mathbb{P}^2}(1)\bigoplus\mathcal{O}_{\mathbb{P}^2}(-1)$
\item $\mathcal{O}_{\mathbb{P}^2}(1)\bigoplus\mathcal{O}_{\mathbb{P}^2}$
\item $\mathcal{O}_{\mathbb{P}^2}\bigoplus\mathcal{O}_{\mathbb{P}^2}$
\item $\mathcal{T}_{\mathbb{P}^2}$
\item $0\rightarrow\mathcal{O}_{\mathbb{P}^2}\rightarrow\mathcal{E}\rightarrow \mathcal{I}_x\rightarrow 0$ where $\mathcal{I}_x$ is the ideal sheaf of a point
\item stable bundle with $c_1=0$, $c_2=2$ 
\item stable bundle with $c_1=0$, $c_2=3$ and $\mathcal{E}(1)$ is spanned
\end{enumerate}
\end{theorem}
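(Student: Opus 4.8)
The plan is to recover $\mathcal{E}$ from the geometry of the smooth projective threefold $X:=\mathbb{P}_{\mathbb{P}^2}(\mathcal{E})$. Write $\pi:X\to\mathbb{P}^2$ for the projection, $\xi$ for the tautological divisor normalised so that $\pi_*\mathcal{O}_X(\xi)=\mathcal{E}$, and $H$ for the hyperplane class on $\mathbb{P}^2$; then $-K_X=2\xi+(3-c_1(\mathcal{E}))\pi^*H$, and ``$\mathcal{E}$ is a Fano bundle'' means precisely that this class is ample. Since twisting $\mathcal{E}$ by a line bundle does not change $X$, I first normalise: replace $\mathcal{E}$ by the unique twist that is nef while $\mathcal{E}\otimes\mathcal{O}_{\mathbb{P}^2}(-1)$ is not nef (this exists because $\mathcal{E}(t)$ is ample for $t\gg0$ and not nef for $t\ll0$). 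Restricting a nef bundle to a line gives $c_1(\mathcal{E})\ge0$; and since $\rho(X)=2$, $X$ has exactly two extremal contractions, one of them $\pi$ with nef supporting divisor $\pi^*H$, so once one checks that $\xi$ spans the other extremal ray, ampleness of $-K_X=2\xi+(3-c_1)\pi^*H$ forces $c_1(\mathcal{E})\le2$. Thus after normalisation $c_1(\mathcal{E})\in\{0,1,2\}$.

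The main engine is the Mori contraction $\psi:X\to X'$ of the second extremal ray, and I would run through its possible types. If $\psi$ is of fibre type onto a surface $S$, then (by the contraction theorems recalled above, Theorems \ref{ando} and \ref{ando2}) $\psi$ is a conic bundle, and a length/fibre analysis shows it is in fact a $\mathbb{P}^1$-bundle over a smooth del Pezzo $S$; comparing the two $\mathbb{P}^1$-bundle structures (fibres of each map are finite over the base of the other, covering it by a fixed numerical class of curves) pins $S$ down to $\mathbb{P}^2$ and $X$ to the incidence divisor in $\mathbb{P}^2\times\mathbb{P}^2$, i.e. $\mathcal{E}\cong\mathcal{T}_{\mathbb{P}^2}$ up to twist (case (4)). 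If $\psi$ is of fibre type onto a curve, then $X'\cong\mathbb{P}^1$ and a general fibre $F$ is a del Pezzo surface on which $\pi|_F:F\to\mathbb{P}^2$ is finite and $\mathcal{O}_F(\xi)$ is a pullback from $\mathbb{P}^2$; the short list of such pairs ($F=\mathbb{P}^2$ or $\mathbb{F}_1$) yields the trivial bundle $\mathcal{O}\oplus\mathcal{O}$ (case (3)) and the non-split extension $0\to\mathcal{O}\to\mathcal{E}\to\mathcal{I}_x\to0$ (case (5)). Finally, if $\psi$ is birational it is a divisorial Mori contraction of a smooth threefold, hence one of the standard five types; in each subcase $X$ is identified as an explicit blow-up of a Fano threefold $X'$ with $\rho(X')=1$, and computing $\pi_*\mathcal{O}_X(\xi)$ produces the split bundles $\mathcal{O}(1)\oplus\mathcal{O}$ and $\mathcal{O}(2)\oplus\mathcal{O}$ when the centre is a point (cases (1)–(2)) and the stable bundles with $c_1=0$, $c_2=2,3$ when the exceptional divisor is a jumping-line configuration (cases (6)–(7)).

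For the indecomposable cases (5), (6), (7) the structural description must be supplemented by vector-bundle input. A nef decomposable bundle with $c_1=0$ is trivial and a nef non-stable one with $c_1=0$ is a trivial bundle or an extension of an ideal sheaf $\mathcal{I}_Z$ by $\mathcal{O}$, so indecomposability plus $c_1=0$ forces either case (5) or stability; restriction to lines together with the Grauert--Mülich theorem bounds the generic and the jumping splitting types; Riemann--Roch on $\mathbb{P}^2$ combined with ampleness of $-K_X$ bounds $c_2(\mathcal{E})\le3$; and a Serre-type construction recovers the stated extension when $c_2=1$ and the global generation of $\mathcal{E}(1)$ when $c_2=3$. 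For completeness one should also run the converse, checking that each bundle in the list is genuinely a Fano bundle by exhibiting $X$ concretely (a blow-up, a product, the incidence divisor in $\mathbb{P}^2\times\mathbb{P}^2$, or the projectivisation of the explicit resolution) and verifying that $-K_X$ is ample.

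I expect the main obstacle to be exactly the stable cases: showing that no stable Fano bundle with $c_1=0$ has $c_2\ge4$, and that $c_2=3$ already forces $\mathcal{E}(1)$ to be globally generated. Here the qualitative fact that $-K_X$ is ample --- equivalently, the precise shape of the two-dimensional nef cone of $X$ --- has to be converted into an effective bound on Chern classes, which requires combining the Grauert--Mülich estimate on splitting types over lines, Riemann--Roch on $\mathbb{P}^2$ and on $X$, and a careful count of sections of twists of $\mathcal{E}$; it is the bookkeeping, not any single idea, that is delicate. A secondary technical point to dispatch at the outset is the normalisation: a priori the second extremal ray could meet a $\pi$-fibre $f$ with intersection number $\ge2$, so that no twist of $\mathcal{E}$ places $\xi$ exactly on the boundary of the nef cone, and this possibility must be excluded (or absorbed into the case analysis) before the rest of the argument applies.
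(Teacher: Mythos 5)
First, a point of order: the paper does not prove this statement at all --- it is quoted from Szurek--Wi\'sniewski \cite{wisniewski2} and used as a black box, so there is no in-paper argument to measure yours against. Your framework (normalise $\mathcal{E}$ up to twist, pass to the Fano threefold $X=\mathbb{P}_{\mathbb{P}^2}(\mathcal{E})$ with $\rho(X)=2$, and classify by the type of the second extremal contraction $\psi$) is indeed the standard route to this classification. The problem is that your execution of the case analysis contains a step that is actually false and, as a consequence, routes several of the seven outputs into the wrong branch, so the argument as sketched would terminate with a strictly smaller (hence wrong) list.

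The failing step is in the branch where $\psi$ is of fibre type onto a surface: you assert that ``a length/fibre analysis shows it is in fact a $\mathbb{P}^1$-bundle''. Ampleness of $-K_X$ does not exclude degenerate conics --- the components of a reducible fibre are rational curves with $-K_X\cdot\ell=1$, which is perfectly compatible with $X$ being Fano; only the hypothesis that $\Lambda^2\mathcal{T}_X$ is nef (Lemma \ref{2} of the paper), which you cannot assume here, would rule this out. This is not a hypothetical loophole: for the stable bundle with $c_1=0$, $c_2=3$ of case (7), $\mathbb{P}(\mathcal{E}(1))$ is the divisor of bidegree $(1,2)$ in $\mathbb{P}^2\times\mathbb{P}^2$ (one checks $(-K_X)^3=30$), whose second contraction is a conic bundle over $\mathbb{P}^2$ with a cubic discriminant; so the surface branch must produce case (7) in addition to $\mathcal{T}_{\mathbb{P}^2}$, and your reduction erases it. The other assignments are off as well: for the extension $0\to\mathcal{O}\to\mathcal{E}\to\mathcal{I}_x\to0$ one gets the blow-up of $Q_3$ along a line ($(-K_X)^3=46$), and for the stable bundle with $c_2=2$ the blow-up of $\mathbb{P}^3$ along a twisted cubic ($(-K_X)^3=38$); both have \emph{birational} second contractions with one-dimensional centres, not del Pezzo fibrations over $\mathbb{P}^1$ or ``jumping-line'' divisorial contractions, and the fibre-type-over-a-curve branch yields only $\mathbb{P}^1\times\mathbb{P}^2$. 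Finally, the two difficulties you flag yourself --- integrality of the twist putting $\xi$ on the nef boundary, and the bound $c_2\leqslant3$ together with spannedness of $\mathcal{E}(1)$ in the stable case --- are exactly where the substance of the theorem lies; naming Grauert--M\"ulich, Riemann--Roch and the Serre construction does not discharge them, so even after repairing the branch assignments the hard half of the proof remains to be written.
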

\begin{theorem}[\cite{wisniewski3}, Theorem]\label{wisniewski3}
Let $\mathcal{E}$ be a rank 3 Fano bundle on $\mathbb{P}^2$. 
Then $\mathcal{E}$ is isomorphic to one of the following up to twist by some line bundle:
 \begin{enumerate}
\item $\mathcal{O}_{\mathbb{P}^2}^3$
\item $\mathcal{O}_{\mathbb{P}^2}(1)\bigoplus\mathcal{O}_{\mathbb{P}^2}^2$
\item $\mathcal{T}_{\mathbb{P}^2}(-1)\bigoplus\mathcal{O}_{\mathbb{P}^2}$
\item $\mathcal{O}_{\mathbb{P}^2}(2)\bigoplus\mathcal{O}_{\mathbb{P}^2}^2$
\item $\mathcal{O}_{\mathbb{P}^2}^2(1)\bigoplus\mathcal{O}_{\mathbb{P}^2}$
\item $\mathcal{T}_{\mathbb{P}^2}(-1)\bigoplus\mathcal{O}_{\mathbb{P}^2}(1)$
\item $\mathcal{O}_{\mathbb{P}^2}\bigoplus\mathcal{E}_2$ where $\mathcal{E}_2$ is in 
$0\rightarrow\mathcal{O}_{\mathbb{P}^2}\rightarrow\mathcal{E}_2(-1)\rightarrow \mathcal{I}_x\rightarrow 0$, $\mathcal{I}_x$ is the ideal sheaf of a point
\item a bundle fitting into $0\rightarrow\mathcal{O}_{\mathbb{P}^2}^2(-1)\rightarrow\mathcal{O}_{\mathbb{P}^2}^5\rightarrow \mathcal{E}\rightarrow 0$ 
\item a bundle fitting into $0\rightarrow\mathcal{O}_{\mathbb{P}^2}(-2)\rightarrow\mathcal{O}_{\mathbb{P}^2}^4\rightarrow \mathcal{E}\rightarrow 0$ 
\end{enumerate}
\end{theorem}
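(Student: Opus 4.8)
The plan is to study the two extremal contractions of $X=\mathbb{P}(\mathcal{E})$ and to recover $\mathcal{E}$ from their structure, with the restriction of $\mathcal{E}$ to lines supplying the basic numerical input. Write $\pi\colon X\to\mathbb{P}^2$ for the bundle projection, $\xi$ for the tautological divisor (so that $\pi_*\mathcal{O}_X(\xi)=\mathcal{E}$), and $H$ for the hyperplane class on $\mathbb{P}^2$. Since the assertion is invariant under twisting, I would first normalise $c_1(\mathcal{E})=eH$ with $e\in\{0,1,2\}$; then $-K_X=3\xi+(3-e)\pi^*H$ is ample and $\rho(X)=2$. A first reduction: for a line $\ell\subset\mathbb{P}^2$, a section of $\pi^{-1}(\ell)\to\ell$ corresponds to a quotient line bundle $\mathcal{E}|_\ell\twoheadrightarrow Q$, and the section $s$ satisfies $-K_X.s=3\deg Q+(3-e)>0$; because $e\leqslant2$ this forces $\deg Q\geqslant0$. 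Hence $\mathcal{E}|_\ell$ is nef for every line, with splitting type $(0,0,0)$, $(1,0,0)$, $(1,1,0)$ or $(2,0,0)$ according to $e$, so the generic splitting type of $\mathcal{E}$ is fixed and the jumping lines are tightly controlled.

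Next I would bring in the extremal-ray machinery. The Mori cone of $X$ is spanned by the class $f$ of a line in a $\pi$-fibre, which $\pi$ contracts, and by a second extremal ray $R$ with contraction $\psi\colon X\to Z$. Every curve in $R$ has nonzero image in $\mathbb{P}^2$ (otherwise it would be proportional to $f$), so $\pi$ restricts to a finite morphism on every fibre of $\psi$ and on its exceptional locus; this already forces $\dim Z\geqslant2$ when $\psi$ is of fibre type and rules out $\psi$ contracting a divisor to a point. I would then run through the types of $\psi$ using the structure results already quoted. In the fibre-type case $Z$ is rationally connected, hence rational; if $\dim Z=2$ it is $\mathbb{P}^2$, and when moreover $\psi$ is a $\mathbb{P}^2$-bundle, $X$ carries two $\mathbb{P}^2$-bundle structures over $\mathbb{P}^2$, so Theorem~\ref{occhetta} gives $X\cong\mathbb{P}^2\times\mathbb{P}^2$ and $\mathcal{E}\cong\mathcal{O}_{\mathbb{P}^2}^{\oplus3}$; the remaining fibre-type possibilities (a quadric-surface bundle, a non-equidimensional $\psi$, or $\dim Z=3$ with $\psi$ a conic- or $\mathbb{P}^1$-bundle over a Fano threefold of Picard number one) are settled by comparing the two bundle structures with the line-splitting constraint. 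If $\psi$ is birational, Theorems~\ref{fujita}, \ref{ando}, \ref{ando2}, \ref{takagi} and Kawamata's Theorem~\ref{kawamata} describe the exceptional locus $D$, and confronting $-K_X|_D$, $\pi^*H|_D$, $\xi|_D$ and the finiteness of $\pi|_D$ pins down $\mathcal{E}$; for instance a small contraction on whose exceptional $\mathbb{P}^2$ the map $\pi$ is an isomorphism produces a section $\mathbb{P}^2\subset X$ with normal bundle $\mathcal{O}_{\mathbb{P}^2}(-1)^{\oplus2}$, whence $\mathcal{E}\cong\mathcal{O}_{\mathbb{P}^2}(1)^{\oplus2}\oplus\mathcal{O}_{\mathbb{P}^2}$ up to twist, as the relevant $\mathrm{Ext}^1$ vanishes.

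The non-split bundles are then captured through numerical invariants. The contraction analysis above in particular yields $\xi.R\geqslant0$, so $\mathcal{E}$ is nef; then the Fulton--Lazarsfeld positivity of Schur polynomials gives $0\leqslant c_2(\mathcal{E})\leqslant c_1(\mathcal{E})^2=e^2$, while $(-K_X)^4>0$ --- computed from the Grothendieck relation via $\xi^4=e^2-c_2(\mathcal{E})$, $\pi^*H.\xi^3=e$, $(\pi^*H)^2.\xi^2=1$ and $(\pi^*H)^3=0$ --- keeps $c_2$ in a short range. For the surviving pairs $(e,c_2)$ with $e=2$ and $c_2\in\{2,3,4\}$ the bundle $\mathcal{E}$ may be stable, and I would invoke the Beilinson / monad description of stable rank-$3$ bundles on $\mathbb{P}^2$ with these Chern classes, which produces exactly the resolutions $0\to\mathcal{O}_{\mathbb{P}^2}(-1)^{\oplus2}\to\mathcal{O}_{\mathbb{P}^2}^{\oplus5}\to\mathcal{E}\to0$ and $0\to\mathcal{O}_{\mathbb{P}^2}(-2)\to\mathcal{O}_{\mathbb{P}^2}^{\oplus4}\to\mathcal{E}\to0$; the strictly semistable or unstable ones split through their Jordan--H\"older, respectively Harder--Narasimhan, filtration into line bundles and the rank-$2$ Fano bundles of Theorem~\ref{wisniewski2}, which recovers $\mathcal{T}_{\mathbb{P}^2}(-1)\oplus\mathcal{O}_{\mathbb{P}^2}$, $\mathcal{T}_{\mathbb{P}^2}(-1)\oplus\mathcal{O}_{\mathbb{P}^2}(1)$ and $\mathcal{O}_{\mathbb{P}^2}\oplus\mathcal{E}_2$. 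Conversely, one checks that each listed bundle does give a Fano fourfold: the split ones and the extensions reduce to lower rank, and the two monad bundles are globally generated, so $\xi$ is base-point free and $-K_X=3\xi+\pi^*H$ is verified ample by Nakai--Moishezon on $X$.

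The hard part will be the birational and low-dimensional-base subcases of the second contraction. There the exceptional locus $D$, or the general fibre of $\psi$, is not as rigid as in the small-contraction case, $\pi|_D$ can still behave nontrivially, and one must combine adjunction on $D$, the classification of (possibly singular or non-normal) del Pezzo manifolds, and the line-splitting constraint to force $\mathcal{E}$ onto the list, while at the same time ruling out those a priori admissible stable bundles with small $c_2$ that do \emph{not} occur by showing their projectivisation is not Fano. Establishing $\xi.R\geqslant0$ --- equivalently, the nefness of the normalised $\mathcal{E}$ --- is the linchpin that makes the numerical bounds available, and it is exactly for this that the full contraction analysis is needed.
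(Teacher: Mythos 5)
This statement is not proved in the paper at all: it is imported verbatim from Szurek--Wi\'sniewski \cite{wisniewski3} and used as a black box in the $(4,2)$-type analysis, so there is no in-paper argument to measure your proposal against. Judged on its own terms, your outline does follow the same general strategy as the cited source (normalise $c_1$, control the splitting type on lines via ampleness of $-K_X$ on sections, exploit the second extremal contraction of $\mathbb{P}(\mathcal{E})$, bound the Chern classes, and finish with Beilinson-type resolutions), and the individual computations you record are correct: $-K_X=3\xi+(3-e)\pi^*H$, $\deg Q\geqslant 0$ for every line-bundle quotient of $\mathcal{E}|_\ell$, $\xi^4=e^2-c_2$, and the identification $\mathcal{E}\cong\mathcal{O}_{\mathbb{P}^2}(1)^{\oplus2}\oplus\mathcal{O}_{\mathbb{P}^2}$ from a section with normal bundle $\mathcal{O}_{\mathbb{P}^2}(-1)^{\oplus2}$ via the vanishing of $\mathrm{Ext}^1(\mathcal{O},\mathcal{O}(1)^{\oplus2})$.

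However, as written this is a plan rather than a proof, and the deferred steps are precisely where the content of the theorem lies. First, the linchpin $\xi\cdot R\geqslant0$ (equivalently, nefness of the normalised $\mathcal{E}$) is asserted to follow from ``the contraction analysis'' but never derived; note that nefness on every line, which you do establish, does not by itself imply nefness of a bundle on $\mathbb{P}^2$, and the extremal curve generating $R$ need not be a section over a line, so the reduction is genuinely open at this point. Second, the case-by-case treatment of the second contraction is only gestured at: when $\psi$ is of fibre type with $\dim Z=2$ you assert $Z\cong\mathbb{P}^2$, but a priori $Z$ is only a normal rational surface with $\rho(Z)=1$ and could be singular; when $\pi$ restricted to a Kawamata exceptional plane is finite of degree $>1$ your section argument does not apply; and the birational cases with divisorial exceptional locus are left entirely to ``confronting'' intersection numbers. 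Third, the endgame for $e=2$ requires knowing which pairs $(c_1,c_2)=(2,c_2)$, $c_2\in\{0,\dots,4\}$, support stable, strictly semistable, or unstable nef bundles (Dr\'ezet--Le Potier existence, Harder--Narasimhan analysis) and then discarding those whose projectivisation fails to be Fano; you name this as ``the hard part'' but supply no mechanism for it. So the proposal is a credible reconstruction of the architecture of the Szurek--Wi\'sniewski proof, but it does not yet constitute a proof of the classification.
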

\begin{theorem}[\cite{wisniewski4}, Theorem 2.1]\label{wisniewski4}
Let $\mathcal{E}$ be a rank 2 Fano bundle on $\mathbb{P}^3$. 
Then $\mathcal{E}$ is isomorphic to one of the following up to twist by some line bundle:
 \begin{enumerate}
\item $\mathcal{O}_{\mathbb{P}^3}\bigoplus\mathcal{O}_{\mathbb{P}^3}$
\item $\mathcal{O}_{\mathbb{P}^3}\bigoplus\mathcal{O}_{\mathbb{P}^3}(-1)$
\item $\mathcal{O}_{\mathbb{P}^3}(-1)\bigoplus\mathcal{O}_{\mathbb{P}^3}(1)$
\item $\mathcal{O}_{\mathbb{P}^3}(-2)\bigoplus\mathcal{O}_{\mathbb{P}^3}(1)$
\item the null-correlation bundle $\mathcal{N}$
\end{enumerate}
\end{theorem}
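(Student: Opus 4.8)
The plan is to study the ruled fourfold $X:=\mathbb{P}_{\mathbb{P}^{3}}(\mathcal{E})$ through its two extremal contractions. Write $\pi\colon X\to\mathbb{P}^{3}$ for the projection, $\xi$ for the tautological divisor and $h=\pi^{\ast}\mathcal{O}_{\mathbb{P}^{3}}(1)$. First I would normalise $\mathcal{E}$, replacing it by a twist so that $c_{1}(\mathcal{E})\in\{0,-1\}$; this changes neither $X$ nor the conclusion. The class of a fibre of $\pi$ spans one extremal ray, on which $-K_{X}\sim 2\xi+(4-c_{1})h$ has degree $2$; by the Cone Theorem the other ray is spanned by a rational curve of small anticanonical degree, and for a $\mathbb{P}^{1}$-bundle over $\mathbb{P}^{3}$ one checks that it may be taken to be a section of $\pi$ over a line $\ell\subset\mathbb{P}^{3}$ realising the most unbalanced splitting type $\mathcal{E}|_{\ell}\cong\mathcal{O}_{\ell}(a_{1})\oplus\mathcal{O}_{\ell}(a_{2})$, $a_{1}\geqslant a_{2}$. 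A short computation on $\pi^{-1}(\ell)\cong\mathbb{F}_{a_{1}-a_{2}}$ gives $-K_{X}\cdot(\text{this section})=2a_{2}+4-c_{1}$, so that $X$ is Fano \emph{if and only if} $\mathcal{E}$ restricts to each line as $\mathcal{O}\oplus\mathcal{O}$ or $\mathcal{O}(1)\oplus\mathcal{O}(-1)$ (when $c_{1}=0$), respectively $\mathcal{O}\oplus\mathcal{O}(-1)$ or $\mathcal{O}(1)\oplus\mathcal{O}(-2)$ (when $c_{1}=-1$); in other words $\mathcal{E}$ may jump on a line by at most one step. Expanding $(-K_{X})^{4}>0$ via the Grothendieck relation $\xi^{2}=c_{1}\,h\,\xi-c_{2}\,h^{2}$ also yields the crude bound $c_{2}(\mathcal{E})\leqslant3$ (resp. $\leqslant4$).

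Next I would dispose of the non-stable $\mathcal{E}$. Such an $\mathcal{E}$ has a saturated sub-line-bundle $\mathcal{O}(a)$ of maximal degree, fitting into $0\to\mathcal{O}(a)\to\mathcal{E}\to\mathcal{I}_{Z}(c_{1}-a)\to0$ with $Z$ of codimension $\geqslant2$; restriction to a general line together with the splitting criterion forces $a\in\{0,1\}$. Restricting the sequence to a line meeting $Z$ then produces a splitting type violating the criterion, so $Z=\varnothing$; the extension splits because $\mathrm{Ext}^{1}(\mathcal{O}(c_{1}-a),\mathcal{O}(a))\cong H^{1}(\mathbb{P}^{3},\mathcal{O}(2a-c_{1}))=0$, and one reads off $\mathcal{E}\cong\mathcal{O}\oplus\mathcal{O}$, $\mathcal{O}(1)\oplus\mathcal{O}(-1)$, $\mathcal{O}\oplus\mathcal{O}(-1)$ or $\mathcal{O}(1)\oplus\mathcal{O}(-2)$, i.e. cases $(1)$--$(4)$.

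Finally, for $\mathcal{E}$ stable I would appeal to the theory of stable rank-$2$ bundles on $\mathbb{P}^{3}$ with small second Chern class. For $c_{1}=0$, stability gives $c_{2}\geqslant1$, and the stable bundles with $c_{1}=0$, $c_{2}=1$ are exactly the null-correlation bundles, whose jumping lines all have type $(1,-1)$; these satisfy the criterion, producing case $(5)$. It remains to exclude the stable bundles with $c_{2}\in\{2,3\}$ for $c_{1}=0$ and all stable bundles for $c_{1}=-1$: here the generic splitting is balanced by Grauert--M\"ulich, and one argues from the structure of the surface of jumping lines in $\mathrm{Gr}(2,4)$ (Barth) that a line whose splitting gap exceeds the generic one by at least $2$ must occur whenever $c_{2}$ is not minimal or $c_{1}\neq0$, contradicting the criterion; failing a clean such statement, one instead applies the ampleness of $-K_{X}$ to the surfaces $\pi^{-1}(\ell)$ and to suitable $2$-cycles to sharpen the numerical estimate and rule out these Chern classes directly. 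This last step is the main obstacle: it is the only point where the soft extremal-contraction machinery used elsewhere in the paper does not suffice and one must invoke the fine geometry of stable bundles on $\mathbb{P}^{3}$.
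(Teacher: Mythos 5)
This statement is not proved in the paper at all: it is quoted verbatim from Szurek--Wi\'sniewski (\emph{Fano bundles over $\mathbb{P}^3$ and $Q_3$}) and used as a black box, so there is no internal proof to compare against. Judged on its own, your outline follows the standard strategy and the first two stages are essentially sound: normalising $c_1\in\{0,-1\}$, computing $-K_X=2\xi+(4-c_1)h$, deducing that the splitting type on every line is $(0,0)$ or $(1,-1)$ (resp.\ $(0,-1)$ or $(1,-2)$), getting $c_2\leqslant 3$ (resp.\ $\leqslant 4$) from $(-K_X)^4>0$, and disposing of the non-stable bundles via the destabilising sub-line-bundle. (Two smaller caveats there: the ``if and only if'' with the line condition is only the easy ``only if'' direction --- ampleness of $-K_X$ is not detected by sections over lines alone, though you only need necessity plus a direct check that the five listed bundles are Fano; and the step ``$Z=\varnothing$'' is not automatic, since for $a=0$, $c_1=0$ a line meeting $Z$ in length one only produces the \emph{allowed} type $(1,-1)$, so you must argue via secant lines or the Hartshorne--Serre condition.)

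The genuine gap is the final step, which you yourself flag as ``the main obstacle'' and then do not carry out: excluding stable bundles with $c_1=0$, $c_2\in\{2,3\}$ and with $c_1=-1$, $c_2\in\{2,4\}$ (note $c_2$ must be even when $c_1=-1$). No argument is actually given --- you offer two alternative strategies in the subjunctive. This cannot be waved away, because the numerical conditions you have derived do not suffice: the exactly analogous stable bundles on $\mathbb{P}^2$ with $c_1=0$, $c_2=2,3$ \emph{are} Fano bundles (items (6) and (7) of Theorem \ref{wisniewski2}), so whatever kills them on $\mathbb{P}^3$ must use genuinely three-dimensional input --- the existence of a line with splitting gap $\geqslant 2$ for every such stable bundle, or an analysis of the second extremal contraction of $\mathbb{P}(\mathcal{E})$ together with the fine structure of these moduli (instanton/'t~Hooft bundles for $c_2=2$, spectra, jumping lines of the second kind). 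That is precisely the content of the cited theorem, and as it stands your proposal assumes rather than proves it.
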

\begin{theorem}[\cite{wisniewski5}, Theorem]\label{wisniewski5}
Let $\mathcal{E}$ be a rank 2 Fano bundle on $Q_3$. 
Then $\mathcal{E}$ is isomorphic to one of the following up to twist by some line bundle:
 \begin{enumerate}
\item $\mathcal{O}_{Q_3}\bigoplus\mathcal{O}_{Q_3}(-1)$
\item spinor bundle
\item $\mathcal{O}_{Q_3}\bigoplus\mathcal{O}_{Q_3}$
\item $\mathcal{O}_{Q_3}(-1)\bigoplus\mathcal{O}_{Q_3}(1)$
\item stable bundle with $c_1=0$, $c_2=2$ 
\end{enumerate}
\end{theorem}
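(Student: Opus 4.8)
The plan is to study $X:=\mathbb{P}(\mathcal{E})$ as a smooth Fano fourfold through its Mori theory. Since $\mathrm{Pic}(Q_3)=\mathbb{Z}\cdot H$ (with $-K_{Q_3}=3H$) and $\rho(X)=\rho(Q_3)+1=2$, the cone $\overline{NE}(X)$ has exactly two extremal rays: one is generated by a line in a fibre of the bundle projection $p:X\to Q_3$, and I denote by $q:X\to Y$ the contraction of the other ray. The proof is organized around determining $q$, since its type (fibre versus birational, and $\dim Y$) is governed by, and in turn pins down, the bundle $\mathcal{E}$. Throughout I normalize by a twist so that $\det\mathcal{E}=c_1H$ with $c_1\in\{0,-1\}$, and I write $c_2(\mathcal{E})=c_2$ for its second Chern class evaluated against the line class.

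First I would extract numerical constraints. From the relative Euler sequence, $-K_X=2\xi+(3-c_1)\,p^*H$, where $\xi$ is the tautological class. Restricting $p$ to the section over an arbitrary line $\ell\subset Q_3$ and writing $\mathcal{E}|_\ell\cong\mathcal{O}(a)\oplus\mathcal{O}(b)$ with $a\geqslant b$, $a+b=c_1$, the minimal section $C_{\min}$ (the one given by the quotient $\mathcal{O}(b)$) satisfies $-K_X\cdot C_{\min}=3-(a-b)$, so ampleness of $-K_X$ forces $a-b\leqslant 2$ for \emph{every} line. This bounds the jumping behaviour of $\mathcal{E}$ and restricts the decomposable possibilities. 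For a global bound I would expand $(-K_X)^4=(2\xi+(3-c_1)\,p^*H)^4$ and push forward to $Q_3$ using $p_*\xi^{i+1}=s_i(\mathcal{E})$ together with $H^3=2$, $H^4=0$; the result is an affine function of $c_2$ with negative slope for each fixed $c_1$, so the necessary condition $(-K_X)^4>0$ caps $c_2$ from above. Together with the Bogomolov inequality in the stable range this leaves only finitely many pairs $(c_1,c_2)$ to examine.

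For each surviving pair I would separate the decomposable and indecomposable cases. If $\mathcal{E}$ splits, the estimate $a-b\leqslant 2$ restricts the summands and, after normalization, yields exactly $\mathcal{O}\oplus\mathcal{O}$, $\mathcal{O}\oplus\mathcal{O}(-1)$ and $\mathcal{O}(-1)\oplus\mathcal{O}(1)$, i.e. items (3), (1) and (4); here $q$ is respectively the second ruling projection of $Q_3\times\mathbb{P}^1$ and two explicit elementary contractions whose target is Fano by direct verification. If $\mathcal{E}$ is indecomposable, I would combine the degree restriction on sub-line bundles coming from $a-b\leqslant 2$ with the Chern-class bounds to show that $\mathcal{E}$ is stable with invariants forced to be $(c_1,c_2)=(-1,1)$ or $(0,2)$. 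In the first case $\mathcal{E}$ is the unique stable bundle of that Chern class, namely the spinor bundle $\mathcal{S}$: then $\mathbb{P}(\mathcal{S})$ is the $\mathrm{Sp}(4)$-flag manifold, hence homogeneous and automatically Fano, giving item (2). In the second case I would realize $\mathcal{E}$ via the Hartshorne--Serre correspondence from a locally complete intersection curve $Z\subset Q_3$ of the prescribed degree and show that precisely $c_2=2$ produces a Fano total space, giving item (5).

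The main obstacle is exactly this indecomposable part. The spinor bundle and the stable $(c_1,c_2)=(0,2)$ bundle restrict uniformly on the general line, so they cannot be detected from splitting types alone; one must supply cohomological input -- Riemann--Roch on $Q_3$, the Bogomolov inequality, and a uniqueness/restriction statement for stable bundles of small $c_2$ -- both to prove that these specific bundles occur and to rule out the other values of $c_2$ still permitted by the crude inequality $(-K_X)^4>0$. Concretely, the delicate step is upgrading that necessary condition to genuine ampleness of $-K_X$ for the borderline stable candidates, which I would handle by computing $-K_X$ on the curves contracted by the second extremal ray and invoking the structure theory of Fano--Mori contractions of fourfolds to eliminate the remaining cases.
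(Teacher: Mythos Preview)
This theorem is not proved in the present paper: it is merely quoted from \cite{wisniewski5} in the section collecting known results on Fano bundles, with no argument supplied. Hence there is no in-paper proof to compare your proposal against.

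For what it is worth, your outline follows the same strategy as the original reference by Sols--Szurek--Wi\'sniewski: study the second extremal contraction of $X=\mathbb{P}(\mathcal{E})$, use the restriction of $\mathcal{E}$ to lines in $Q_3$ to bound the splitting type (your computation $-K_X\cdot C_{\min}=3-(a-b)$ is correct and yields $a-b\leqslant 2$), and then separate the decomposable from the stable cases, the latter being pinned down by Chern-class bounds and Riemann--Roch/Hartshorne--Serre. The steps you identify as delicate---upgrading $(-K_X)^4>0$ to genuine ampleness for the borderline stable candidates, and the identification of the $(c_1,c_2)=(-1,1)$ bundle with the spinor bundle---are precisely where the cited paper does the actual work, so your sketch is an accurate roadmap rather than a complete proof.
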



\begin{thebibliography}{99}
\bibitem{ando}
\textsc{T. Ando},
On extremal rays of the higher dimensional varieties, Invent. Math. 81 (1985), 347--357.
\bibitem{beltrametti}
\textsc{M. Beltrametti},
On d-folds whose canonical bundle is not numerically effective, according to Mori and Kawamata, Annali. Mat. Pura e Appl 147 no.1 (1987), 151--172.

\bibitem{biswas}
\textsc{I. Biswas and J. P. P. dos Santos},
On the vector bundles over rationally connected varieties, C. R. Acad. Sci. Paris, Mathematique 347 (2009), 1173--1176.

\bibitem{campana}
\textsc{L. Bonavero, F.Campana and J. A. Wisniewski},
Vari\'et\'es complexes dont l'\'eclat\'ee en un point est de Fano, C. R. Acad. Sci. Paris, Ser. I 334 (2002), 463--468.

\bibitem{peternell1}
\textsc{F. Campana and T. Peternell},
Projective manifolds whose tangent bundles are numerically effective, Math. Ann. 289 (1991), 169--187.

\bibitem{peternell2}
\textsc{F. Campana and T. Peternell},
On the second exterior power of tangent bundles of threefolds, Compositio Math. 83 no.3 (1992), 329--346.

\bibitem{peternell3}
\textsc{F. Campana and T. Peternell},
4-folds with numerically effective tangent bundles and second Betti numbers greater than one, Manuscripta Math. 79 (1993), 225--238.

\bibitem{enoki}
\textsc{I. Enoki},
Stability and negativity for tangent sheaves of minimal Khler spaces. Geometry and analysis on manifolds (Katata/Kyoto, 1987), Lecture Notes in Math., 1339, Springer, Berlin, 1988, 118--126.

\bibitem{fujita}
\textsc{T. Fujita},
On singular Del Pezzo varieties, Springer Lecture Note 1417 (1990), 117--128.

\bibitem{hwang}
\textsc{J. M. Hwang},
Rigidity of rational homogeneous spaces, Proc. of ICM. Euro. Math. Soc. (2006), 613--626.

\bibitem{horing}
\textsc{A.H\"oring and C. Novelli},
Mori contractions of maximal length, Publ. Res. Inst. Math. Sci. 49, No. 1, 215-228 (2013).

\bibitem{kachi}
\textsc{Y. Kachi},
Extremal contractions from $4$-dimensional manifolds to $3$-folds, Ann. Scuola Norm. Sup. Pisa Cl. Sci. (4) 24 No.1 (1997), 63--131.

\bibitem{kawamata}
\textsc{Y. Kawamata},
Small contractions of four dimensional algebraic manifolds, Math. Ann. 284 (1989), 595--600.

\bibitem{kollar}
\textsc{J. Koll\'ar},
Rational Curves on Algebraic Varieties, Ergeb. Math. Grenzgeb., 32, Springer, 1996.

\bibitem{kollar1}
\textsc{J. Koll\'ar, Y. MIyaoka and S. Mori},
Rational connectedness and boundedness of Fano manifolds, J. Differential Geom. 36 (1992), no. 3, 765-779.

\bibitem{lazarsfeld}
\textsc{R. Lazarsfeld},
Some applications of the theory of positive vector bundles, Springer Lecture Note 1092, 29--61.

\bibitem{mok}
\textsc{N. Mok},
On Fano manifolds with nef tangent bundles admitting 1-dimensional varieties of minimal rational tangents, 
Trans. of A.M.S. 357 no.7 (2002), 2639--2658.

\bibitem{mori}
\textsc{S. Mori},
Projective manifolds with ample tangent bundle, Ann. of Math. 110. (1979), 593- 606.

\bibitem{nakayama}
\textsc{N. Nakayama},
Zariski decomposition and abundance. MSJ Memoirs. 14, Mathematical Society of Japan, Tokyo 2004.

\bibitem{occhetta}
\textsc{G. Occhetta},
A characterization of products of projective spaces, Canad. Math. Bull. 49 (2) (2006), 270--280.

\bibitem{takagi}
\textsc{H. Takagi},
Classification of extremal contractions from smooth fourfolds of $(3,1)$-type, Proc. of Amer. Math. Soc. 127 No. 2 (1999), 315--321.

\bibitem{tsuji}
\textsc{H. Tsuji},
Stability of tangent bundles of minimal algebraic varieties. Topology 27 (1988), no. 4, 429--442.

\bibitem{wisniewski1}
\textsc{J. A. Wisniewski},
On contractions of extremal rays of Fano manifolds, J. reine angew. Math. 417 (1991), 141--157.

\bibitem{wisniewski2}
\textsc{M. Szurek and J. A. Wisniewski},
Fano bundles of rank 2 on surfaces, Compositio Math. 76 (1990), 295--305.

\bibitem{wisniewski3}
\textsc{M. Szurek and J. A. Wisniewski},
On Fano manifolds, which are $\mathbb{P}^k$-bundles over $\mathbb{P}^2$, Nagoya Math. J. 120 (1990), 89--101.


\bibitem{wisniewski4}
\textsc{M. Szurek and J. A. Wisniewski},
Fano bundles over $\mathbb{P}^3$ and $Q_3$, Pacific journal of Math. 141 no.1 (1990), 197--208.

\bibitem{wisniewski5}
\textsc{I. Sols, M. Szurek and J. A. Wisniewski},
Rank-2 Fano bundles over a smooth quadric $Q_3$, Pacific J. of Math. 148 no.1 (1991), 153--159.

\bibitem{watanabe}
\textsc{K. Watanabe},
Fano 5-folds with nef tangent bundles and Picard numbers greater than one, to appear in Math. Z.

\bibitem{yau}
\textsc{S. -T.Yau}
Calabi's conjecture and some new results in algebraic geometry. Proc. Nat. Acad. Sci. U.S.A. 74 (1977), no.5, 1798--1799.

\end{thebibliography}
\end{document}